\numberwithin{equation}{section}
\newtheorem{theorem}{Theorem}[section]
\newtheorem{definition}[theorem]{Definition}
\newtheorem{proposition}[theorem]{Proposition}
\newtheorem{corollary}[theorem]{Corollary}
\newtheorem{lemma}[theorem]{Lemma}
\newtheorem{remark}[theorem]{Remark}
\newtheorem{example}[theorem]{Example}
\newcommand{\cali}[1]{\mathscr{#1}}
\newcommand{\Nor}{\mathop{\mathrm{Nor}}\nolimits}
\newcommand{\Leb}{{\rm Leb}}
\newcommand{\supp}{{\rm supp}}
\newcommand{\dist}{{\rm dist}}
\newcommand{\ddc}{{dd^c}}
\newcommand{\dc}{{d^c}}
\renewcommand{\Re}{{\rm Re}}
\renewcommand{\Im}{{\rm Im}}
\newcommand{\Cc}{\cali{C}}
\newcommand{\Dc}{\cali{D}}
\newcommand{\Fc}{\cali{F}}
\newcommand{\Lc}{\cali{L}}
\newcommand{\Sc}{\cali{S}}
\newcommand{\Ic}{\cali{I}}
\newcommand{\C}{\mathbb{C}}
\newcommand{\D}{\mathbb{D}}
\renewcommand{\H}{\mathbb{H}}
\newcommand{\N}{\mathbb{N}}
\newcommand{\Z}{\mathbb{Z}}
\newcommand{\R}{\mathbb{R}}
\newcommand{\T}{\mathbb{T}}
\newcommand{\B}{\mathbb{B}}
\newcommand{\K}{\mathbb{K}}
\newcommand{\U}{\mathbb{U}}
\renewcommand{\S}{\mathbb{S}}
\renewcommand{\P}{\mathbb{P}}
\newcommand{\X}{\mathbb{X}}
 \newcommand{\I}{\mathbb{I}}
\title[]{Singular holomorphic foliations by curves.  III: Zero Lelong numbers }
\author{Vi{\^e}t-Anh Nguy{\^e}n}
\address{Universit\'e de Lille, 
Laboratoire de math\'ematiques Paul Painlev\'e, 
CNRS U.M.R. 8524,  
59655 Villeneuve d'Ascq Cedex, 
France. {\tt  https://pro.univ-lille.fr/viet-anh-nguyen/}}
\email{Viet-Anh.Nguyen@univ-lille.fr}
\date{March 25, 2022}
\begin{document}


\begin{abstract}
Let $\Fc$ be a  holomorphic  foliation by curves defined in a neighborhood of $0$ in $\C^n$ ($n\geq 2$)
 having  $0$ as  a weakly hyperbolic  singularity.
 Let $T$ be a positive harmonic 
 current  directed  by $\Fc$  which 
    does not give  mass  to any of the $n$ coordinate invariant hyperplanes $\{z_j=0\}$ for $1\leq j\leq n.$ 
Then we  show that the Lelong number  of $T$ at $0$ vanishes.
Moreover, an application of this local result in the  global context is   given.  
We   discuss  also the relation  between 
several basic notions such as directed  positive harmonic currents, directed positive $\ddc$-closed  currents, Lelong numbers etc.   in the  framework of 
singular  holomorphic foliations. 
\end{abstract}

\maketitle

\medskip\medskip

\noindent
{\bf MSC 2020:} Primary: 37F75,  37A30;  Secondary: 57R30.

 \medskip

\noindent
{\bf Keywords:} singular  holomorphic  foliation,  (weakly) hyperbolic singularity,  directed  positive harmonic current, directed positive $\ddc$-closed  current,   Lelong number.


\section{Introduction} \label{S:Intro}

 The  aim of this  article is  twofold.
 Its  first (but not main) purpose is to  revisit the basis of several  fundamental notions in the theory of singular holomorphic  foliations such as:  directed positive harmonic currents, 
 directed  positive $\ddc$-closed currents, Lelong numbers etc.
 The second (but  main) purpose  of the  article is
 to prove  the following local result and apply it to several  global contexts.

\begin{theorem}\label{T:main} {\rm (Main Theorem)} 
 Let $\Fc:=(\D^n,\Lc,\{0\})$ with  $n\geq 2,$    be  
a     holomorphic  foliation, which is  defined  on the unit polydisc $\D^n$ of $\C^n$ 
and 
which is  associated  to the linear vector field 
$$\Phi(z) = \sum_{j=1}^n\lambda_j z_j {\partial\over \partial z_j} ,\qquad  z=(z_1,\ldots,z_n),$$ where  $\lambda_j$ are all nonzero complex numbers  and there  are some $1\leq l\not=k\leq n$
   with $\lambda_k/\lambda_l\not\in \R.$  
  Let  $T$ be a  positive  harmonic current   directed by $\Fc$   which 
    does not give  mass  to any of the $n$ coordinate invariant hyperplanes $\{z_j=0\}$.  Then   the   Lelong  number of $T$ at the origin $0:=(0,\ldots,0)$  vanishes.  
 \end{theorem}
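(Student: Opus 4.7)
The plan is to construct a tangent current at the origin, use logarithmic coordinates together with Liouville's theorem to show this tangent is closed, and then exploit the weak-hyperbolicity hypothesis to force its support onto the coordinate hyperplanes—which by assumption carry no mass.

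\medskip

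\emph{Step 1 (Tangent current).} Because $\Phi$ is linear, each diagonal dilation $A_r(z):=rz$ preserves $\Fc$, sending the leaf through $z_0$ to the leaf through $rz_0$. Hence the pullbacks $T_r:=A_r^*T$ are positive harmonic currents directed by $\Fc$ that still do not charge any coordinate hyperplane, and a direct scaling computation gives that the mass of $T_r$ on $B(0,R)$ equals $r^{-2}\sigma_T(B(0,rR))$, which converges to $\pi R^2\nu(T,0)$ as $r\to 0^+$. Extract a weak limit $T_0$ from $\{T_r\}$: then $T_0$ is a positive harmonic current on $\C^n$, directed by $\Fc$ on $\C^n\setminus\{0\}$, not charging coordinate hyperplanes, invariant under every $A_r$, and with mass $\pi\nu(T,0)$ on $B(0,1)$. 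It therefore suffices to show $T_0=0$.

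\medskip

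\emph{Step 2 (Tangent current is closed).} Pass to the logarithmic covering $\pi:\C^n\to(\C^*)^n$, $\pi(\zeta):=(e^{\zeta_1},\ldots,e^{\zeta_n})$. In $\zeta$-coordinates the pulled-back foliation $\tilde\Fc:=\pi^*\Fc$ is the foliation of $\C^n$ by the complex lines parallel to $\lambda$, so every leaf is a whole copy of $\C$. The pullback $\tilde T_0:=\pi^*T_0$ is a positive harmonic current on $\C^n$ directed by $\tilde\Fc$; locally it disintegrates as $\int h_\alpha[L_\alpha]\,dm(\alpha)$ with $h_\alpha$ a positive harmonic weight on a plaque of $L_\alpha$. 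Gluing over a cover of the leaf by flow-boxes, each $h_\alpha$ extends to a positive harmonic function on the full leaf $L_\alpha\simeq\C$, which by Liouville's theorem must be a constant. Absorbing the constants into the transverse measure gives $\tilde T_0=\int[L_\alpha]\,d\tilde m(\alpha)$, which is closed. Hence $T_0$ is closed on $(\C^*)^n$, and because $T_0$ does not charge the coordinate hyperplanes, the closedness extends to all of $\C^n$.

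\medskip

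\emph{Step 3 (Support analysis via weak hyperbolicity).} A closed positive $(n{-}1,n{-}1)$-current on $\C^n$ invariant under all dilations $A_r$ is a cone over $\P^{n-1}$: $T_0=\int[\C\xi]\,d\mu([\xi])$ for some positive Radon measure $\mu$ on $\P^{n-1}$. For the complex line $\C\xi$ to be tangent to $\Fc$ at a point $z=t\xi\neq 0$, one needs $\Phi(t\xi)=t(\lambda_j\xi_j)_j$ to be a scalar multiple of $\xi$, i.e., there exists $c\in\C$ with $\lambda_j\xi_j=c\xi_j$ for every $j$. Thus every $j$ with $\xi_j\neq 0$ must satisfy $\lambda_j=c$. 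The weak-hyperbolicity assumption $\lambda_k/\lambda_l\notin\R$ forces $\lambda_k\neq\lambda_l$, so $\xi$ cannot have both $\xi_k\neq 0$ and $\xi_l\neq 0$. Therefore $\mathrm{supp}(\mu)\subset\{[\xi]:\xi_k=0\}\cup\{[\xi]:\xi_l=0\}$, whence $T_0$ is supported on $\{z_k=0\}\cup\{z_l=0\}$. Since $T_0$ does not charge any coordinate hyperplane, $T_0=0$, and consequently $\nu(T,0)=0$.

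\medskip

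The main obstacle is Step 2: the claim that the local leafwise harmonic weights $h_\alpha$ of a positive harmonic directed current glue to a single positive harmonic function on each entire leaf $\C$. This demands a careful compatibility analysis of the disintegration across overlapping flow-boxes, rooted in the very definition of a directed positive harmonic current. Once this gluing is secured, Liouville's theorem collapses the problem to the closed case, and Steps 1 and 3 proceed by standard tangent-cone and support arguments.
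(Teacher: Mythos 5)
There is a genuine gap, and it sits exactly where the real difficulty of the theorem lives: the assertion at the end of Step 1 (used decisively in Step 3) that the tangent current $T_0$ does not charge the coordinate hyperplanes. It is true that each $T_r=A_r^*T$ gives no mass to $\{z_j=0\}$, but this property is not preserved under weak limits of positive currents (just as $\delta_{1/m}\to\delta_0$: mass that avoids a closed set can concentrate onto it in the limit). Your own Step 3 makes the danger explicit: the directedness/weak-hyperbolicity analysis forces any nonzero $T_0$ to be supported in $\{z_k=0\}\cup\{z_l=0\}$, so a nonzero tangent current would \emph{necessarily} live on the hyperplanes. The possibility you must exclude is precisely that the mass of $T$ concentrates along the separatrices as one zooms in, producing e.g.\ $T_0=c\,[\{z_1=\cdots=z_{n-1}=0\}]$ while $T$ itself charges no hyperplane. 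Ruling this out is the entire analytic content of the paper: the decomposition $T=\int h_x[L_x]\,d\nu(x)$, the Poisson representation of $h_x$ on the phase polygons $\Pi_x$, and the uniform kernel bound $K_s(x,\xi)\lesssim\min\bigl(1,(\dist^\star_x(\xi)/s)^{\gamma-1}\bigr)$ of Lemma \ref{L:K_s}, which quantifies how little mass the leaves close to the hyperplanes can contribute. Your argument assumes this conclusion rather than proving it.

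Two secondary problems. First, to extract a weak limit of $(T_r)$ you need $\limsup_{r\to0}r^{-2}\|T\|_{\B(0,r)}<\infty$; since $T$ is only $\ddc$-closed on $\D^n\setminus\{0\}$ (not across $0$ --- see the example in Subsection \ref{SS:Lelong-harmonic-currents}, where the Lelong number is infinite), neither Skoda-type monotonicity nor the existence of the limit is available a priori, and the Lelong number is defined only as a $\limsup$. Second, the claim that a dilation-invariant closed positive current of bidimension $(1,1)$ on $\C^n$ is an average of currents $[\C\xi]$ is false in general (for $n=2$, $\ddc\max(\log|z_1|,\log|z_2|)$ is conic and closed but is not an average of lines); you would need to invoke directedness and the decomposition of closed directed currents to salvage Step 3. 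Steps that do work: the logarithmic-coordinates/Liouville argument correctly shows that any directed positive harmonic current on $(\C^*)^n$ for this foliation is closed there (the gluing of the $h_\alpha$ is exactly the uniqueness part of Proposition \ref{P:decomposition}), but this only controls $T_0$ away from the hyperplanes, which is not where the obstruction is.
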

 Note that  the hypothesis on the linear vector field means that $0$ is  an isolated weakly  hyperbolic  singularity of $\Fc$
 and  $\Fc$ has no  other singularity.
 

It  is  natural  to  ask   how and to what extent   the value of  the current $T$ near  the union  $$\mathcal Z:=\D^n\cap \bigcup\limits_{j=1}^n\{z_j=0\}$$  of the  coordinate invariant hyperplanes on $\D^n$
affects   the conclusion   of the Main Theorem. The next result, which gives also a  stronger version of  the Main Theorem, answers  this question. 

\begin{theorem}\label{T:main_bis}
Let $\Fc$ be the  foliation  as in   Theorem \ref{T:main} and  $\widecheck\Fc$  the  restriction of   $\Fc$  to  $\D^n\setminus \mathcal Z.$
Let  $T$ be a  positive  harmonic current on $\D^n\setminus \mathcal Z$  directed by $\widecheck \Fc$ such that
the mass of $T$ on $\D^n\setminus (r\D)^n$  is  finite  for some $r\in (0,1).$ Here $(r\D)^n$ denotes the polydisc
of polyradius $r$ in $\C^n.$ 
Then  the following assertions hold:
\begin{enumerate} 
\item The mass of $T$  on $\D^n$  is  finite.
\item The Lelong number of $T$ at every point  of $\mathcal Z $  vanishes.
\end{enumerate}
\end{theorem}

Combining  Theorem \ref{T:main} and  some results of  Forn{\ae}ss--Sibony  \cite{FornaessSibony05,FornaessSibony10},  the  following global  picture  is obtained for
 directed positive harmonic  currents living on compact complex  manifolds.
 \begin{theorem}\label{T:main_global}
Let $\Fc=(X,\Lc,E)$    be  
a     singular  holomorphic  foliation   with      the set of  singularities $E$ in a    compact complex manifold  $X.$ 
Assume that
\begin{enumerate}
 \item   there is  no invariant analytic  curve;
\item all the singularities  are  hyperbolic linearizable;
\item  there is   no  non-constant holomorphic map $\C\to X$ such that
out of $E$   the image of $\C$ is locally contained in  a leaf.
\end{enumerate}
Then, for every  positive  harmonic  current $T$  directed by $\Fc,$   $T$  is  diffuse  and the   Lelong  number of $T$ vanishes everywhere   in $X.$  
 \end{theorem}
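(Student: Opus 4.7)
The plan is to verify vanishing of Lelong numbers and diffuseness separately at regular points and at hyperbolic singular points, then combine.

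At a regular point $x\in X\setminus E$, the foliation $\Fc$ is a local product. Results of Forn{\ae}ss--Sibony in \cite{FornaessSibony05,FornaessSibony10} imply, using hypotheses (1) and (3), that any directed positive harmonic current $T$ is diffuse and has Lelong number zero at $x$. Heuristically, a positive Lelong number at $x$ would force $T$ to concentrate along the leaf through $x$; uniformization of that leaf combined with Nevanlinna-type considerations would then produce a non-constant holomorphic map $\C\to X$ whose image is locally contained in a leaf, contradicting (3).

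At a hyperbolic singularity $p\in E$, Poincar{\'e}'s linearization theorem gives local holomorphic coordinates on a polydisc neighborhood $U_p\cong\D^n$ of $p$ in which $\Fc|_{U_p}$ is associated to $\Phi(z)=\sum_{j=1}^n\lambda_j z_j\,\partial/\partial z_j$ with $\lambda_k/\lambda_l\notin\R$ for some $k\neq l$. By Theorem \ref{T:main}, it then suffices to check that $T|_{U_p}$ charges none of the coordinate invariant hyperplanes $\{z_j=0\}$. To rule out such mass, one iterates a slicing argument: mass on $\{z_j=0\}$ would descend, via slicing, to a positive harmonic current directed by the restricted (again linear, hyperbolic) foliation on that hyperplane; iterating in decreasing dimension produces eventually a directed current concentrated on a one-dimensional local separatrix, i.e., a coordinate axis. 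Such a separatrix either extends to a global invariant analytic curve in $X$, contradicting (1), or, by lifting to the universal cover of the leaf containing it, yields a non-constant holomorphic map $\C\to X$ with image locally in a leaf, contradicting (3). Combining the two cases, the Lelong number of $T$ vanishes at every point of $X$; diffuseness is then immediate, since an atom of $T$ at some $y$ would force a strictly positive Lelong number at $y$.

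The principal obstacle is the iterated slicing at singular points: one must rigorously show that restricting a directed positive harmonic current to a local invariant hyperplane yields again a positive harmonic current directed by the restricted foliation. Slicing is classical for positive $\ddc$-closed currents but is more subtle for positive harmonic currents, so the preliminary part of the paper on the interplay between directed positive harmonic currents and directed positive $\ddc$-closed currents (and their behavior under restriction) should supply exactly the tools needed to make this reduction work, together with a careful use of Corollary \ref{C:main_1} to handle potential loss of mass near the exceptional set $\mathcal{Z}$ in local coordinates.
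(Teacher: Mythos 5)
Your overall skeleton (regular points via Forn{\ae}ss--Sibony-type arguments, singular points via Theorem \ref{T:main} after dealing with the invariant hyperplanes) matches the paper, but the key step at a singularity is wrong. You claim it suffices to show that $T$ charges \emph{none} of the coordinate hyperplanes, and you try to prove this by arguing that mass on $\{z_j=0\}$ descends by iteration to a current concentrated on a one-dimensional separatrix, whence a contradiction with (1) or (3). This descent fails: the restriction of $T$ to $\{z_j=0\}$ is a directed positive harmonic current of the restricted (linear, hyperbolic) foliation on $\D^{n-1}$, but nothing forces it to charge any further coordinate hyperplane. For $n=3$, say, $T$ could perfectly well live on $\{z_1=0\}$ and be ``generic'' there; the iteration then stops at dimension $2$ without ever reaching a separatrix, no contradiction arises, and your reduction to Theorem \ref{T:main} collapses. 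Such currents are not excluded by hypotheses (1)--(3), and the paper does not exclude them. Instead it writes $T=T'+\sum_{j\in J}T_j$, where $T_j$ is the piece of $T$ carried by $\{z_j=0\}$ (read off from the transverse measure in Proposition \ref{P:decomposition}, so no delicate slicing theory is needed); Theorem \ref{T:main} gives $\mathcal L(T',0)=0$, and one proves $\mathcal L(T_j,0)=0$ by descending induction on dimension, the point being that at each stage one only needs the Lelong number of the hyperplane piece to vanish, not the piece itself to vanish. The induction terminates at dimension $2$, where the two invariant ``hyperplanes'' are single leaves (coordinate axes), and mass on a single leaf is an atom of the transverse measure, excluded by diffuseness --- not by your dichotomy ``global invariant curve or entire map,'' which is not immediate since the leaf through a separatrix need not be parabolic.

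A second, smaller gap: you treat diffuseness as a citation, but Forn{\ae}ss--Sibony's Theorem \ref{T:FS} is two-dimensional; its extension to $n>2$ is one of the contributions of the paper and requires Lemma \ref{L:Poisson-representation} (the uniform Poisson representation of the harmonic densities on the phase spaces $\Pi_x$ near a singularity) to show that a positive harmonic function on a leaf, bounded on $\partial\Pi_x$ uniformly in $x$, stays bounded through the singular flow box. Relatedly, your closing remark that ``diffuseness is then immediate'' from Lelong-number vanishing risks circularity, since your Lelong-number argument at regular points itself leans on diffuseness; in the paper the implication runs the other way (diffuseness plus Harnack gives $\mathcal L(T,x)\le c\,\nu(\{x\})=0$ at regular points).
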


The  above  theorem  and   results by  Brunella  \cite{Brunella},   Jouanolou \cite{Jouanolou} and Lins Neto-Soares \cite{NetoSoares}, give us the following corollary.
It can be   applied  to  every generic  foliation in $\P^n$ with a given degree $d>1.$
  
\begin{corollary}\label{C:main_2}
 Let $\Fc=(\P^n,\Lc,E)$    be  
a   singular  foliation by Riemann surfaces on the complex projective space $\P^n$  with $n\geq 2.$ Assume that
all the singularities  are hyperbolic and  that  $\Fc$ has no invariant algebraic  curve. 
Then for every  positive harmonic  current $T$  directed by $\Fc,$  $T$ is  diffuse and  the   Lelong  number of $T$ vanishes everywhere   in $\P^n.$ 
\end{corollary}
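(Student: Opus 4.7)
The strategy is to reduce Corollary \ref{C:main_2} to a direct application of Theorem \ref{T:main_global} by verifying its three hypotheses in the special case $X=\P^n$. Hypothesis (2) (hyperbolicity of all singularities) is part of the assumption, so nothing is needed there. Hypothesis (1) (no invariant analytic curve) follows from the assumption that $\Fc$ has no invariant algebraic curve together with Chow's theorem: any closed irreducible analytic subset of $\P^n$ of pure dimension one is algebraic, so an $\Fc$-invariant analytic curve in $\P^n$ would be an $\Fc$-invariant algebraic curve, contradicting the hypothesis.

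The only substantive step is verifying hypothesis (3): the absence of a non-constant holomorphic map $\phi:\C\to\P^n$ whose image, outside $E$, is locally contained in a leaf. This is precisely the content of Brunella's hyperbolicity theorem \cite{Brunella}: under the standing assumptions (all singularities hyperbolic and no invariant algebraic curve), every leaf of $\Fc$ is hyperbolic, i.e.\ is uniformized by the unit disk. Since $E$ is a proper analytic subset of $\P^n$ and $\phi$ is non-constant, $\phi^{-1}(E)$ is a discrete subset of $\C$; on each component of $\C\setminus\phi^{-1}(E)$ the map $\phi$ lies in a single leaf $L$, and lifting to the universal cover of $L$ together with the removable singularity / Picard-type arguments used in this setting force $\phi$ to be constant, a contradiction. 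Hence (3) holds, and Theorem \ref{T:main_global} yields that $T$ is diffuse and has vanishing Lelong number at every point of $\P^n$.

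For the final remark that this applies to the generic foliation of degree $d>1$ in $\P^n$, one invokes the results of Jouanolou \cite{Jouanolou} and Lins Neto--Soares \cite{NetoSoares}, which state that in the (irreducible) parameter space $\mathrm{Fol}_d(\P^n)$ of degree $d$ foliations, the subset of foliations admitting an invariant algebraic curve is contained in a countable union of proper algebraic subvarieties; combined with the genericity of hyperbolic singularities, this places the hypotheses of the corollary in the complement of a meagre set. The main obstacle in this proof is entirely a matter of correctly quoting Brunella's theorem, since the verifications of (1) and (2) are formal; no new analytic work beyond Theorem \ref{T:main_global} itself is required.
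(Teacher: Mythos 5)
Your proposal is correct and follows essentially the same route as the paper: both reduce the corollary to Theorem \ref{T:main_global} by checking its three hypotheses, with (1) formal (Chow's theorem), (2) assumed, and (3) supplied by Brunella's theorem, followed by the Jouanolou and Lins Neto--Soares genericity statement. The only notable difference is in how Brunella is quoted: the paper cites his result as the non-existence of nontrivial directed positive closed currents and deduces condition (3) (Brody hyperbolicity) from that, whereas you cite hyperbolicity of the leaves and sketch (3) by lifting to the universal cover --- a sketch that is slightly loose when $\phi^{-1}(E)$ contains two or more points (there $\C\setminus\phi^{-1}(E)$ is itself hyperbolic and the Liouville argument does not apply directly; the clean route is the Ahlfors-current construction contradicting Brunella), but this does not change the substance of the argument.
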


It is  worthy noting that  the above results  generalize   our previous  work  \cite{NguyenVietAnh18a}  to all dimensions.   
The last  two decades  witness many important advances  in the  theory of holomorphic foliations by curves on ambient  complex  surfaces  emphasizing on 
 singular  holomorphic foliations.    The reader is invited to consult
 the surveys  \cite{DinhSibony20,FornaessSibony08,NguyenVietAnh18c,NguyenVietAnh20b} for systematic  expositions.
The  present work is  motivated by these exciting  developments. Our distant goal is  trying  to understand  the theory  in the  general  case of   higher  dimensions $n>2$.
Therefore,  one of the first  steps  should be  to investigate local  situations  near the singularities of  the  foliation in  question.

\medskip

This  point of view  seems to be  fruitful  in dimension $n=2.$ 
Indeed, the  work of  Forn{\ae}ss--Sibony  \cite{FornaessSibony05, FornaessSibony08, FornaessSibony10}   initiates
the local study of positive harmonic measure near a  hyperbolic singularity  for this dimension.
This  study is  an important tool  for  further developments of the  theory, see e.g.  \cite{DinhNguyenSibony18,DinhSibony18,FornaessSibony10}.
A  typical  feature in dimension $n=2$ is  that  the phase spaces are not only  simple, but also    essentially unique modulo a translation and a rotation,  see  Figure \ref{fig:M1}.
This makes the  analysis in dimension  $n=2$  feasible.
Roughly speaking,  a {\it phase space} $\Pi_x$ is a  domain in $\C$ which parametrizes  the part  of the leaf $L_x$  inside  the unit polydisc $\D^n,$  where $x$ is a point in $\D^n\setminus \{0\}$ (see Section \ref{S:Geometry} for more details).
In fact, in dimension $n=2,$  the (unique) phase space is a sector and  the behavior of its Poisson kernel determines the mass-repartition  of the positive harmonic  currents
near  the  hyperbolic singularity.
In \cite{NguyenVietAnh18a} the author  revisits  this  question and obtains  a complete behavior of the Poisson kernel of  the phase space.
This  result  plays a vital role in  the  author's several subsequent developments  \cite{NguyenVietAnh18b,NguyenVietAnh18d} when  he   studies the  Lyapunov exponent
of singular holomorphic  foliations  living on complex surfaces.

\begin{figure}
\centering
\begin{minipage}{.2\textwidth}
\begin{tikzpicture}
\draw (0,0) node [below] {$A$}; 
\node at (0,0) {$\bullet$};
   \draw  [->] (0,0)--(75:4cm) node [above] {$y$};
     \draw  [->] (0,0)--(25:4cm) node [right] {$x$};
     \draw[->] (25:1cm)  arc  (25:75:1cm);
     \node at (50:1.25cm)  {$\theta$};
\end{tikzpicture}
\end{minipage}
\hspace{3cm}
\begin{minipage}{.2\textwidth}
\begin{tikzpicture}
\draw (0,0) node [below] {$O$}; 
\node at (0,0) {$\bullet$};
   \draw  [->] (0,0)--(50:4cm) node [above] {$y$};
     \draw  [->] (0,0)--(0:4cm) node [right] {$x$};
     \draw[->] (0:1cm)  arc  (0:50:1cm);
     \node at (25:1.25cm)  {$\theta$};
\end{tikzpicture}
\end{minipage}
\caption{On the left: the phase space of a foliation with a hyperbolic  singularity in dimension $n=2:$ 
a sector  with  central angle  $\theta.$ On the right:   the phase space  is unique (i.e. with vertex at the origin $O$ and with $Ox$ the real axis)
modulo a translation and a rotation.} \label{fig:M1}
\end{figure}
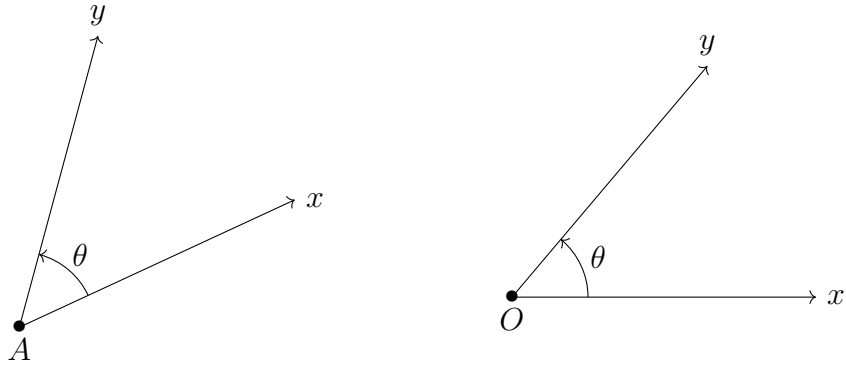


Unfortunately,  in higher  dimension $n>2,$ the geometry of phase spaces $\Pi_x$ is not  simple as well as  not  unique  any more. Figures \ref{fig:M2} and  \ref{fig:M3}
describe all possible  phase spaces in dimension $n=3.$
As  we will see in Section \ref{S:Geometry}, $\Pi_x$ is a convex $m(x)$-gon, where the integer $m(x)$  varies  between $2$ and $ n.$ Moreover, $\Pi_x$ may be  bounded or unbounded.
The  Poisson kernels of these  phase spaces are difficult to  study. Although they are all conformally equivalent to  the unit disc $\D$ by a   Schwarz-Christoffel mapping
(see e.g. \cite{DT}),  this  tentative attempt  turns out to be  not   realistic. Indeed,  we have, in principle, Schwarz-Christoffel  formula in order to compute this mapping and hence 
the Poisson kernel of $\Pi_x.$ 
But this  formula  is  useful  only when we understand very well the shape of the  phase space in question, and  even if it is the case, when $n$ is  large,
the formula  only gives us a small information  on the Poisson kernel of the domain $\Pi_x$ near its boundary. 
However,  the shape of $\Pi_x$  changes drastically in terms of  $x\in\D^n\setminus \{0\},$ in particular, when $x$ approaches the coordinate hyperplanes.
Therefore, this  formula  alone  does not work.

\begin{figure}
\centering
\begin{minipage}{.4\textwidth}
\begin{tikzpicture}
\draw (0,0)  node [below] {$A$};
\node at (0,0) {$\bullet$};
\draw[->] (0:1cm)  arc  (0: atan(4/3)):1cm);
\draw (5,0)  node [below] {$B$};
\node at (5,0) {$\bullet$};
\draw (4,0)  arc  (180: 180-atan(3/4)):1cm);
     \draw (3.9,0)  arc  (180: 180-atan(3/4)):1.1cm);
      \draw (3.95,0)  arc  (180: 180-atan(3/4)):1.05cm);
     \draw (0,0)--(5,0); 
     \draw (0,0)--(1.8,2.4); 
     \draw  (1.8,2.4) node [above] {$C$};
     \node at (1.8,2.4) {$\bullet$};
     \draw (1.5,2)  arc  (180+atan(4/3): 270+atan(4/3)):0.5cm);
      \draw (1.47,1.96)  arc  (180+atan(4/3): 270+atan(4/3)):0.55cm);
     \draw (5,0)--(1.8,2.4);

\end{tikzpicture}
 \end{minipage}
\begin{minipage}{.4\textwidth}
\begin{tikzpicture}
\draw (0,0) node [below] {$A$}; 
\node at (0,0) {$\bullet$};
     \draw  [->] (0,0)--(5,0) node [right] {$x$};
      \draw  [->] (0,0)--(1.8,2.4) node [above] {$y$};
      \draw[->] (0:1cm)  arc  (0: atan(4/3)):1cm);
\end{tikzpicture}
\end{minipage}

\begin{minipage}{.4\textwidth}
\begin{tikzpicture}
\draw (5,0) node [below] {$B$}; 
\node at (5,0) {$\bullet$};
\draw (4,0)  arc  (180: 180-atan(3/4)):1cm);
     \draw (3.9,0)  arc  (180: 180-atan(3/4)):1.1cm);
      \draw (3.95,0)  arc  (180: 180-atan(3/4)):1.05cm);
     \draw  [->] (5,0)--(0,0) node [left] {$x$};
      \draw  [->] (5,0)--(1.8,2.4) node [above] {$y$};
\end{tikzpicture}
\end{minipage}
\begin{minipage}{.4\textwidth}
\begin{tikzpicture}
\draw (1.8,2.4) node [above] {$C$}; 
\node at (1.8,2.4) {$\bullet$};
\draw (1.5,2)  arc  (180+atan(4/3): 270+atan(4/3)):0.5cm);
      \draw (1.47,1.96)  arc  (180+atan(4/3): 270+atan(4/3)):0.55cm);
     \draw  [->] (1.8,2.4)--(0,0) node [left] {$x$};
      \draw  [->] (1.8,2.4)--(5,0) node [right] {$y$};
\end{tikzpicture}
\end{minipage}

\caption{The phase spaces of a  foliation with a hyperbolic  singularity  in dimension $n=3:$ the first (the triangle $ABC$) is unique modulo the composition of  a translation and a dilation, whereas the  remaining three sectors are    unique modulo a translation.} \label{fig:M2}
\end{figure}
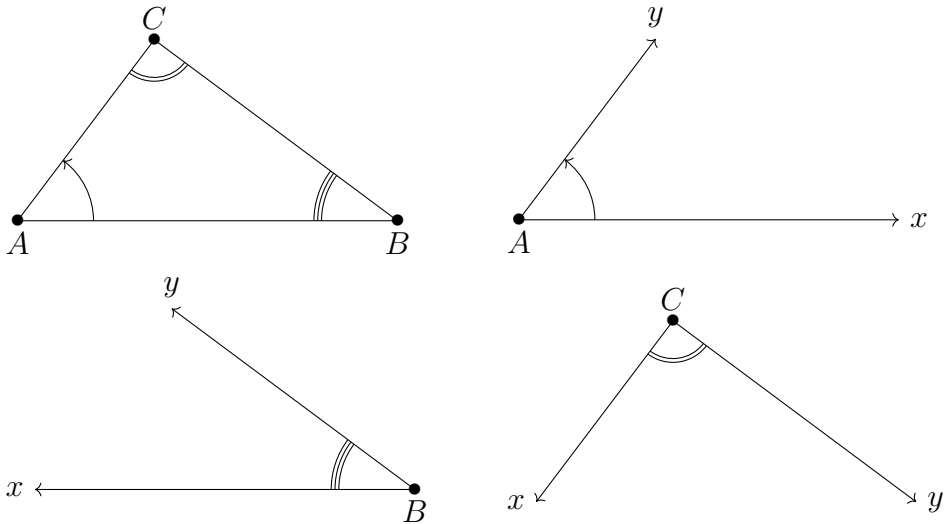

Our main  idea  is  to  use   the comparison principle  of Poisson kernels and to  combine it with  the complete  behavior  of Poisson kernel in dimension $2$ provided by \cite{NguyenVietAnh18a} and
Schwarz-Christoffel  formula. The comparison principle of Poisson kernels is a well-known technique in Harmonic Analysis where it often applies to  bounded smooth  domains.
In the present work, the principle applies to  phase spaces which are, in general,  neither smooth nor  bounded.
We do not obtain a complete behavior of the Poisson kernel of the phase spaces  as in dimension $2$, but instead we get  their asymptotic behavior which suffices for our purpose.
We hope that the techniques developed in this work will be  useful in many other problems.

\medskip

The article  is  organized as  follows. In   Section \ref{S:Background}     we  strengthen the basis of  the  theory of singular  holomorphic  foliations and  set up the  background 
of the article.  So  this  section  fulfills  the  first  purpose   of this  article.
The  rest of the article is  devoted  to the second (and main) purpose.  More  specifically,  
Section \ref{S:Geometry} studies the geometry of a  singular flow box.
Here, we will  see that  the phase spaces as well as  other    related  objects  encountered  in  dimension $n>2$   are much more  complicated  to  understand than  those  in dimension $n=2.$
 Our main  estimates   are  developed  in   Section  \ref {S:Main-estimates} which  are the core of the  work.
The proofs  of  Theorem \ref{T:main} and  Theorem   \ref{T:main_bis}  will be  provided   in Section \ref{S:Main-Theorem}.
The  proofs of Theorem  \ref{T:main_global}  and  Corollary   \ref{C:main_2} will be given in  Section \ref{S:Other-results}.
The  article  is concluded  with some  remarks and  open questions.
  
  \smallskip

  \noindent {\bf Notation.} Throughout  the paper, 
  \begin{itemize} \item $\D$ denote   the unit disc in $\C,$  and for $r>0,$ $r\D$ denotes the disc of center $0$  and of radius $r.$  
   \item For an open set $\Omega\subset \C,$  $\partial \Omega$ denotes the topological boundary of $\Omega$ and $P_\Omega$  denotes its Poisson kernel.
\item  $\Leb_1$ (resp.  $\Leb_2$) denotes the one-dimensional  (resp. two-dimensional) Lebesgue measure.
 \item The letters $c,$ $c',$ $c'',$ $c_0,$  $c_1,$ $c_2$ etc. denote  positive constants, not necessarily the same at each  occurrence. 
 \item The notation $\gtrsim$ and $\lesssim$ means inequalities  up to  a  multiplicative constant, whereas  we  write  $\approx$ when  both inequalities  are satisfied.

  \end{itemize}
\smallskip

\noindent
{\bf Acknowledgments. }  
The  author  acknowledges   support by the Labex CEMPI (ANR-11-LABX-0007-01) and by the project QuaSiDy  (ANR-21-CE40-0016).
The paper was partially prepared 
during the visit of the  author at the Vietnam  Institute for Advanced Study in Mathematics (VIASM). He would like to express his gratitude to this organization for hospitality and  for  financial support.

 \section{Background}\label{S:Background}
    In this  section we undertake the  first (not main)  task of this  work. Namely, we revisit the background of the  theory  of singular holomorphic foliations  emphasizing the relations between some  basic notions such as
      directed positive harmonic currents, 
 directed  positive $\ddc$-closed currents etc.  The  survey
 \cite{NguyenVietAnh20b} gives a unified treatment in a more general context of a lamination  which is holomorphically immersed  in a complex  manifold.   
 See also the survey \cite{FornaessSibony08} for the  original discussion of these notions.     
 
    \subsection{Positive $\ddc$-closed currents and Lelong number}
 Let $X$ be a  complex manifold of dimension $n.$
 We fix an atlas
of $X$ which is locally finite. Up to reducing slightly the charts, we
can assume that the local coordinate system associated to
each chart is defined on a neighbourhood of the closure of this chart. For $0\leq p,q\leq k$ and $l\in\N$, denote by  $\Dc^{p,q}_l(X)$ the space of  $(p,q)$-forms  of
class $\Cc^l$ with compact support in $X,$ and  $\Dc^{p,q}(X)$ their
intersection for $l\in\N.$ If $\alpha$ is a  $(p,q)$-form on $X$, denote by
$\|\alpha\|_{\Cc^l}$ the sum of the $\Cc^l$-norms of the coefficients
of $\alpha$ in the local coordinates. These norms induce a
topology   on $\Dc^{p,q}_l(X)$ and $\Dc^{p,q}(X)$. In particular, a
sequence $\alpha_j$ converges to $\alpha$  in  $\Dc^{p,q}(X)$ if
these forms are supported in a fixed compact set and if
$\|\alpha_j-\alpha\|_{\Cc^l}\rightarrow 0$ for every $l$. 
 
 A {\it $(p,q)$-current} on  $X$ (or equivalently, a {\it current  of bidegree $(p,q),$} or equivalently,   a {\it current  of bidimension  $(n-p,n-q)$}) is
 a  continuous linear form $T$  on $\Dc^{n-p,n-q}(X)$ with values in $\C.$

A $(p,p)$-form on  $X$  is {\it
  positive} if it can be written at every point as a combination with
positive coefficients of forms of type
$$i\alpha_1\wedge\overline\alpha_1\wedge\ldots\wedge
i\alpha_p\wedge\overline\alpha_p$$
where the $\alpha_j$ are $(1,0)$-forms. A $(p,p)$-current or a $(p,p)$-form $T$ on $X$ is
{\it weakly positive} if $T\wedge\varphi$ is a positive measure for
any smooth positive $(n-p,n-p)$-form $\varphi$. A $(p,p)$-current $T$
is {\it positive} if $T\wedge\varphi$ is a positive measure for
any smooth weakly positive $(n-p,n-p)$-form $\varphi$. 
If $X$ is given with a Hermitian metric $\beta$ and $T$ is  a positive  $(p,p)$-current on $X,$
$T\wedge \beta^{n-p}$ is a positive measure on
$X$. The mass of  $T\wedge \beta^{n-p}$
on a measurable set $A$ is denoted by $\|T\|_A$ and is called {\it the mass of $T$ on $A$}.
{\it The mass} $\|T\|$ of $T$ is the total mass of  $T\wedge \beta^{n-p}$ on $X.$


 A $(p,p)$-current on  $X$  is {\it
closed} if  $d T=0$ in the  weak sense (namely,  $T(d \alpha)=0$ for every  test form $\alpha\in \Dc^{n-p,n-p-1}(X)\oplus\Dc^{n-p-1,n-p}(X)$). 
 A $(p,p)$-current on  $X$  is {\it
$\ddc$-closed} if  $\ddc T=0$ in the  weak sense (namely,  $T(\ddc \alpha)=0$ for every  form $\alpha\in \Dc^{n-p-1,n-p-1}(X)$).

Let $T$ be  a positive $\ddc$-closed current on  $X.$  A fundamental theorem of Skoda \cite{Skoda} says that  the Lelong number  of $T$ at  a  point $x\in X,$ defined by
 \begin{equation}\label{e:Lelong}
 \nu(T,x):= \lim_{r\to 0+}{1\over  \pi^{n-p} r^{2(n-p)}}\int_{\B(x,r)} T\wedge ( \ddc\|z\|^2)^{n-p}.
 \end{equation}
 always  exists and is finite non-negative.
Here, we identify, via a local coordinate $z,$ a neighborhood
of $x$ in $X$ to an open neighborhood  of $0$ in  $\C^n,$    and $\B(x,r)$ is thus identified with 
the Euclidean ball in $\C^n$ with center $0$ and radius $r.$  
 In fact,   Siu  \cite{Siu} (see also \cite{Demailly}) shows that   when $T$  is  a  positive  closed  current, the  Lelong
number $\nu(T,x)$
  is  independent of 
the choice of local  coordinates  near $x.$  The same  result  for  positive  $\ddc$-closed currents  is  proved  by  
    Alessandrini--Bassanelli
  \cite{AlessandriniBassanelli96}.

The  next  simple  result    allows  for extending  positive $\ddc$-closed  currents of bidimension $(1,1)$    through isolated points.

\begin{theorem} \label{T:extension_1}{\rm  (Dinh-Nguyen-Sibony \cite[Lemma 2.5]{DinhNguyenSibony12},  Forn{\ae}ss-Sibony-Wold \cite[Lemma 17]{FornaessSibonyWold})}
Let $T$ be a positive current of bidimension $(1,1)$ with compact support on a complex manifold $X$. Assume that $\ddc T$ is a negative measure on $X\setminus E$ where $E$ is a finite set. 
Then  $T$ is a positive  $\ddc$-closed  current on $X.$
\end{theorem}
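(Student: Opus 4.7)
My plan is to show $\ddc T=0$ on all of $X$ in three steps: extend $-\ddc T$ from $X\setminus E$ to a finite Radon measure on $X$, rule out any distributional (derivative-of-delta) singularity at points of $E$, and then use the compact support of $T$ to force the resulting measure to vanish.

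\emph{Step 1 (Finite mass and trivial extension).} Set $\mu:=-\ddc T\ge 0$ on $X\setminus E$. To show $\mu(X\setminus E)<\infty$, I would work in a local chart near each $p\in E$ with radial cutoffs $\chi_\epsilon(z)=\eta(|z-p|^2/\epsilon^2)$ that vanish on $B(p,\epsilon/2)$ and equal $1$ outside $B(p,\epsilon)$. The identity $\int\chi_\epsilon\,d\mu=-\langle T,\ddc\chi_\epsilon\rangle$ combined with the uniform bound $|\ddc\chi_\epsilon|\lesssim\epsilon^{-2}$ on the annulus $B(p,\epsilon)\setminus B(p,\epsilon/2)$ and the Wirtinger-type mass estimate $\|T\|_{B(p,\epsilon)}=O(\epsilon^2)$ (a consequence of $T$ being positive of bidimension $(1,1)$) gives a bound on $\int\chi_\epsilon\,d\mu$ uniform in $\epsilon$. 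Monotone convergence then yields $\mu(X\setminus E)<\infty$, and I write $\tilde\mu$ for the trivial extension of $\mu$ to $X$ (putting $\tilde\mu(\{p\})=0$).

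\emph{Step 2 (Eliminating distributional singularities on $E$).} The distribution $R:=\ddc T+\tilde\mu$ is supported on the finite set $E$. Because $T$ is positive, it has order zero, so $\ddc T$ has order at most $2$; hence $R$ decomposes as a finite sum, at each $p\in E$, of $\delta_p$ and its partial derivatives up to order $2$. To kill every genuine derivative term $\partial^\alpha\delta_p$ with $|\alpha|\ge 1$, I would test $R$ against specific perturbations of the radial cutoffs from Step 1 (multiplying $\chi_\epsilon$ by polynomial factors in $z-p$), so that the putative coefficient appears at leading order in the pairing $\langle T,\ddc(\cdot)\rangle$. The same $O(\epsilon^{2})$ mass estimate matches exactly the $O(\epsilon^{-2})$ growth of $\ddc\chi_\epsilon$, so the pairing is $O(1)$, whereas a nonzero derivative coefficient would force blow-up of order $\epsilon^{-k}$ with $k\ge 1$. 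This contradicts the finite pairing, forcing the derivative coefficients to vanish. Thus $R=\sum_{p\in E}c_p\delta_p$ for real constants $c_p$, and in particular $\ddc T$ is a signed Radon measure on all of $X$.

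\emph{Step 3 (Compact support forces vanishing).} Pick a smooth cutoff $\chi$ equal to $1$ on a neighborhood of $\supp(T)\cup E$ with compact support in $X$. Then $\ddc\chi\equiv 0$ on $\supp(T)$, so
$$\langle\ddc T,\chi\rangle=\langle T,\ddc\chi\rangle=0.$$
Combined with the decomposition from Step 2, this yields $-\tilde\mu(X)+\sum_{p\in E}c_p=0$. Finally, I would show $c_p\le 0$ for each $p$: approximating the indicator of a small neighborhood of $p$ by nonneg cutoffs and using $\ddc T\le 0$ on $X\setminus E$ together with the derivative-coefficient vanishing from Step 2, any mass of $R$ at $p$ must be non-positive. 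With $\tilde\mu\ge 0$ and all $c_p\le 0$ summing to $\tilde\mu(X)\ge 0$, every term vanishes. Hence $\tilde\mu\equiv 0$ and all $c_p=0$, giving $\ddc T=0$ on $X$, as required.

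\emph{Main obstacle.} The essential difficulty is Step 2: ruling out genuine distributional singularities of $\ddc T$ at $E$ even though $T$ is merely of order zero. The whole argument rests on the precise matching between the $O(\epsilon^{2})$ mass growth for positive $(n-1,n-1)$-currents near an isolated point and the $O(\epsilon^{-2})$ blow-up of $\ddc\chi_\epsilon$ for radial cutoffs; any looser mass estimate would allow distributional derivatives of delta to survive at $E$ and the argument would break down.
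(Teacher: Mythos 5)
The paper does not prove this statement: it is quoted verbatim from the cited references (Dinh--Nguy\^en--Sibony, Lemma 2.5; Forn\ae ss--Sibony--Wold, Lemma 17), so there is no in-paper argument to compare yours with. Judged on its own, your proposal has a genuine gap, and it sits exactly where you locate the ``main obstacle.'' The estimate $\|T\|_{\B(p,\epsilon)}=O(\epsilon^2)$ is \emph{not} ``a consequence of $T$ being positive of bidimension $(1,1)$.'' Positivity alone gives no decay of the mass in small balls; the $O(\epsilon^{2p})$ bound for bidimension-$(p,p)$ currents is the Lelong--Skoda monotonicity estimate and needs $T$ to be closed or $\ddc$-closed (or at least to have $\ddc T$ of controlled sign \emph{together with} some a priori information at $p$). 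Since Steps~1 and~2 both rest on the exact matching $O(\epsilon^{-2})\cdot O(\epsilon^{2})=O(1)$, the proof collapses here. Worse, this bound cannot be recovered from the stated hypotheses at all: take $T=\delta_p\otimes\Theta$ with $\Theta$ a constant positive $(n-1,n-1)$-form, e.g.\ $T=\delta_0\cdot\,i\,dz_2\wedge d\bar z_2$ in $\C^2$. This is a positive current of bidimension $(1,1)$ with compact support, $\ddc T=0$ on $X\setminus\{0\}$, yet $\|T\|_{\B(0,\epsilon)}$ is a nonzero constant and $\ddc T$ is a nonzero combination of second derivatives of $\delta_0$. So any correct proof must use an additional hypothesis --- in practice that $T$ gives no mass to $E$, which is how the result is always invoked in this paper (cf.\ Definition \ref{D:Directed_hamonic_currents_with_sing}(i)) --- and must then \emph{derive} the mass decay near $p$ from $\ddc T\le 0$ on the punctured neighborhood via a Lelong--Jensen type argument; that derivation is the real content of the cited lemmas, not a free by-product of positivity.

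Two further points. First, in Step~2 you propose to detect the coefficient of $\partial^\alpha\delta_p$ by pairing with $\chi_\epsilon$ times a polynomial in $z-p$; but these test functions vanish identically near $p$, hence pair to zero with every distribution supported at $p$ and detect nothing. The standard device is to pair with rescaled bumps $\varphi((z-p)/\epsilon)$ equal to a monomial near $p$: then $\langle\partial^\alpha\delta_p,\cdot\rangle$ scales like $\epsilon^{-|\alpha|}$ while $|\langle T,\ddc(\cdot)\rangle|\lesssim\epsilon^{-2}\|T\|_{\B(p,C\epsilon)}$, which kills the terms with $|\alpha|=1,2$ precisely when the mass decay of the previous paragraph is available --- and fails otherwise, as the point-mass example shows. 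Second, your Step~3 is sound once Steps~1--2 are granted ($\ddc T$ a signed measure, compact support, total mass zero, non-positive atoms), so the architecture of the argument is right; what is missing is the actual proof of the mass estimate, which is the heart of the lemma.
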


When   the support of $T$  is not compact, we  only have the following  local mass finiteness.

\begin{theorem} \label{T:extension_2}{\rm  (Alessandrini--Bassanelli
  \cite[Main Theorem 5.6]{AlessandriniBassanelli93})}
Let $T$ be a positive $\ddc$-closed  current of bidimension $(1,1)$ outside a single point $x$  on a complex manifold $X$.  Then  the mass of $T$  is  finite near $x.$
\end{theorem}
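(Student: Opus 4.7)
The plan is to reduce to a local chart and obtain a uniform mass bound via cutoffs and an integration-by-parts argument exploiting $\ddc T = 0$. Fix a holomorphic chart identifying a neighborhood of $x$ with $B := B(0,r_0)\subset\C^n$, so that $x$ becomes the origin, and let $\beta := \ddc(\|z\|^2/2)$. It suffices to show
$$\int_{B(0,r_0/2)\setminus\{0\}} T\wedge\beta < \infty.$$

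I would introduce two cutoffs: $\eta\in C_c^\infty(B)$ equal to $1$ on $B(0,r_0/2)$, and $\chi_\epsilon(z) := \rho(\|z\|/\epsilon)$, where $\rho\colon[0,\infty)\to[0,1]$ is smooth with $\rho=0$ on $[0,1/2]$ and $\rho=1$ on $[1,\infty)$. Writing $\varphi_\epsilon := \eta\chi_\epsilon$, the current $\varphi_\epsilon T$ is compactly supported in $B\setminus\{0\}$, where $\ddc T = 0$, and the quantity
$$M_\epsilon \;:=\; \int \varphi_\epsilon\, T\wedge\beta$$
is a priori finite and increases as $\epsilon\downarrow 0$. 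It suffices to bound $M_\epsilon$ uniformly in $\epsilon\in(0,r_0/4)$, then conclude by monotone convergence. With $u := \|z\|^2/2$, so that $\beta = \ddc u$, an integration by parts---valid because $\ddc T = 0$ on $\mathop{\rm supp}(\varphi_\epsilon u)\subset B\setminus\{0\}$---yields
$$M_\epsilon \;=\; -\int u\,\ddc\varphi_\epsilon\wedge T \;-\; \int du\wedge d^c\varphi_\epsilon\wedge T \;-\; \int d\varphi_\epsilon\wedge d^c u\wedge T.$$
The first term is a Chern--Levine--Nirenberg integral: the $\ddc\eta$-part is controlled by the mass of $T$ on a compact set inside $B\setminus\{0\}$, and the $\eta\,\ddc\chi_\epsilon$-part is $O(1)$ uniformly in $\epsilon$, since $|\ddc\chi_\epsilon|\lesssim\epsilon^{-2}$ on the annulus $\{\epsilon/2\le\|z\|\le\epsilon\}$ while $u\lesssim\epsilon^2$ there.

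The hard part is the two cross terms, since $\ddc$-closedness is strictly weaker than closedness and $dT$, $d^c T$ need not vanish. The standard device, going back to El Mir and Sibony, is the Cauchy--Schwarz inequality
$$\Bigl|\int\partial u\wedge\bar\partial\varphi_\epsilon\wedge T\Bigr|^2 \;\le\; \Bigl(\int i\partial u\wedge\bar\partial u\wedge T\Bigr)\cdot\Bigl(\int i\partial\varphi_\epsilon\wedge\bar\partial\varphi_\epsilon\wedge T\Bigr),$$
valid because $T$ is positive. The first factor is bounded by $\tfrac{r_0^2}{2}M_\epsilon$ from the pointwise inequality $i\partial u\wedge\bar\partial u\le\tfrac{\|z\|^2}{2}\beta$. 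The second factor, via the identity $d\varphi_\epsilon\wedge d^c\varphi_\epsilon = \tfrac12\ddc(\varphi_\epsilon^2)-\varphi_\epsilon\,\ddc\varphi_\epsilon$ together with $\int\ddc(\varphi_\epsilon^2)\wedge T = 0$ (again from $\ddc T=0$), reduces to $-\int\varphi_\epsilon\,\ddc\varphi_\epsilon\wedge T$, which is controlled by a bootstrap/iteration argument comparing $M_\epsilon$ to the mass of $T$ on slightly enlarged annuli. Absorbing the cross contributions into $M_\epsilon$ leads to a self-improving inequality of the form $M_\epsilon\le C_1 + C_2\sqrt{M_\epsilon}$, which forces $M_\epsilon\le C_3$ independently of $\epsilon$. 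The main obstacle throughout is the careful bookkeeping of these cross terms, which is where the assumption of bidimension exactly $(1,1)$ is essential: it ensures that the contraction of $T$ with a $(1,1)$-form is a scalar measure on which Cauchy--Schwarz and positivity arguments directly apply.
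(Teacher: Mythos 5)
First, a point of reference: the paper does not prove this statement at all; it is quoted from Alessandrini--Bassanelli \cite{AlessandriniBassanelli93}, so there is no internal proof to compare with. Note also that, as literally stated (and as you implicitly noticed), the theorem is false without a hypothesis on $\ddc T$: the smooth form $T=\|z\|^{-2n}\beta^{n-1}$ on $\C^n\setminus\{0\}$ is positive of bidimension $(1,1)$ with infinite mass near $0$. You silently add $\ddc T=0$, which is the hypothesis relevant to this paper (Alessandrini--Bassanelli prove it for positive plurisubharmonic currents); that addition is necessary and acceptable.

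The genuine gap is in your treatment of the term $\int u\,\eta\,\ddc\chi_\epsilon\wedge T$, and it propagates to the cross terms. From $|\ddc\chi_\epsilon|\lesssim\epsilon^{-2}\beta$ and $u\lesssim\epsilon^2$ on the annulus $A_\epsilon=\{\epsilon/2\le\|z\|\le\epsilon\}$ you can only conclude $\bigl|\int u\,\eta\,\ddc\chi_\epsilon\wedge T\bigr|\le C\,\|T\|_{A_\epsilon}$, \emph{not} $O(1)$: the mass of $T$ on the transition annulus is exactly the quantity whose boundedness is in question, so the estimate is circular. The same happens after Cauchy--Schwarz in the cross terms, where the product of the two factors is $\bigl(\epsilon^2\|T\|_{A_\epsilon}\bigr)\cdot\bigl(\epsilon^{-2}\|T\|_{A_\epsilon}\bigr)=\|T\|_{A_\epsilon}^2$. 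Every term on the right-hand side is therefore bounded by $C\,\|T\|_{A_\epsilon}$ with a constant $C$ that cannot be made small (it involves $\sup|\rho''|\gtrsim 1$), and the resulting recursion in the dyadic masses does not close; in particular the inequality $M_\epsilon\le C_1+C_2\sqrt{M_\epsilon}$ is never actually derived. This is precisely why the naive cutoff/CLN argument fails for extension-across-singularities theorems. The known proofs (El Mir--Skoda for closed currents; Sibony \cite{Sibony} and Alessandrini--Bassanelli \cite{AlessandriniBassanelli93} for the plurisubharmonic case) replace the radial cutoff $\rho(\|z\|/\epsilon)$ by a \emph{plurisubharmonic} cutoff, a convex increasing function of $\log\|z\|$ truncated near $-\infty$, so that $\ddc\chi_\epsilon\ge 0$ except on a region that stays at a fixed distance from the singularity; the dangerous term then has a favorable sign and is discarded rather than estimated in absolute value, and only then does the Cauchy--Schwarz bookkeeping for the first-order terms $dT$, $\dc T$ close up. That sign argument is the missing idea here.
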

 \subsection{Singular holomorphic foliations and  hyperbolic singularities}
     
Let $X$ be a complex   manifold of dimension $n.$  A {\it   holomorphic foliation (by Riemann surfaces, or equivalently,  by curves)}   $\Fc=(X,\Lc)$ on $X$  is  the  data of  a {\it foliation  atlas} with foliated charts 
$$\Phi_p:\U_p\rightarrow \B_p\times \T_p.$$
Here, $\T_p$ and $\B_p$  are  domains in $\C^{n-1}$ and in $\C$ respectively,    $\U_p$ is  a  domain in 
$X,$ and  
$\Phi_p$ is  biholomorphic,  and  all the changes of coordinates $\Phi_p\circ\Phi_q^{-1}$ are of the form
$$x=(y,t)\mapsto x'=(y',t'), \quad y'=\Psi(y,t),\quad t'=\Lambda(t).$$

The open set $\U_p$ is called a {\it flow
  box} and the Riemann surface $\Phi_p^{-1}\{t=c\}$ in $\U_p$ with $c\in\T_p$ is a {\it
  plaque}. The property of the above coordinate changes insures that
the plaques in different flow boxes are compatible in the intersection of
the boxes. Two plaques are {\it adjacent} if they have non-empty intersection.
 
A {\it leaf} $L$ is a minimal connected subset of $X$ such
that if $L$ intersects a plaque, it contains that plaque. So a leaf $L$
is a  Riemann surface  immersed in $X$ which is a
union of plaques. A leaf through a point $x$ of this foliation is often denoted by $L_x.$
  A {\it transversal} is  a complex submanifold  of codimension 1 in  $X$  which is  transverse to  the leaves  of $\Fc.$

A {\it    holomorphic foliation   with singularities} is  the data
$\Fc=(X,\Lc,E)$, where $X$ is a complex   manifold, $E$ a closed
subset of $X$ and $(X\setminus E,\Lc)$ is a  holomorphic  foliation. Each point in $E$ is  said to be  a {\it  singular point,} and
$E$ is said to be {\it the  set of singularities} of the foliation. We always  assume that $\overline{X\setminus E}=X$, see 
e.g. \cite{DinhNguyenSibony12,FornaessSibony08, NguyenVietAnh18c,NguyenVietAnh20b}  for more details.

  Consider a   holomorphic foliation    $\Fc=(X,\Lc,E)$ and an isolated point $x$ of $E.$  We say that a
 $x$ is {\it linearizable} if 
there is a (local) holomorphic coordinates system of $X$ on an open neighborhood $\U$ of $x$ on which
$\Fc|_{\U_x}=(\U_x,\Lc|_{\U_x},\{x\})$ is identified with  $\widehat\Fc:=(\D^n,\widehat\Lc, \{0\})$ and 
 the
leaves of $\widehat\Fc$  are, under this  identification, integral curves of a  linear vector field
$$\Phi(z) = \sum_{j=1}^n\lambda_j z_j {\partial\over \partial z_j},\qquad  z=(z_1,\ldots,z_n),$$
where  $\lambda_j$ are some nonzero complex numbers.
Such  a neighborhood  $\U_x$ is  called 
a {\it singular flow box} of  $x.$ $\widehat\Fc=(\D^n,\widehat\Lc, \{0\})$ is called  a {\it local model} of the  linearizable  singularity $x.$ 

\begin{definition}\label{e:singular-points} \rm 
 Let  $x\in E$  a
linearizable singular point of $\Fc$ as  above.
\begin{itemize}
\item We say that is $x$ is {\it weakly hyperbolic}  if  
 there  are {\bf some} $1\leq j\not=k\leq n$
   with $\lambda_j/\lambda_k\not\in \R.$  
\item 
   We  say that $x$  is
{\it hyperbolic} if
 $\lambda_j/\lambda_k\not\in \R$ for 
{\bf all} $1\leq j\not=k\leq n.$ 
\end{itemize}
\end{definition}
\begin{remark}
 \rm
 When $n=2,$   weakly hyperbolic  singular point $=$ hyperbolic singular point. But for $n>2,$ the weakly hyperbolic  singularity is
 strictly  weaker  than the  hyperbolic  singularity.  
\end{remark}

 \subsection{Positive harmonic currents and  directed positive $\ddc$-closed currents}
 
 Let  $\Fc=(X,\Lc,E)$ be a  singular holomorphic  foliation  on a complex manifold $X$ of dimension $n.$
 
 A   {\it (directed) $(p,q)$-form} on $\Fc$ can be seen on the flow box
$\U\simeq \B\times\T$ as
a  $(p,q)$-form  on $\B$ depending on the parameter $t\in \T$. 
For  $0\leq p,q\leq 1$,  denote by  $\Dc^{p,q}_l(\Fc)$ the space of 
 $(p,q)$-form $f$ with compact support in $X\setminus E$ satisfying the following property: 
$f$ restricted to each flow box $\U\simeq \B\times\T$ is a 
 $(p,q)$-form of class $\Cc^l$ on the plaques whose coefficients and
all their derivatives up to order $l$ depend continuously on the
plaque. The norm $\|\cdot\|_{\Cc^l}$ on
this space is defined as in the case of  manifold using a locally finite
atlas of $\Fc$. We also define   $\Dc^{p,q}(\Fc)$  as the intersection of
  $\Dc^{p,q}_l(\Fc)$ for $l\geq 0$.
  In particular, a
sequence $f_j$ converges to $f$  in  $\Dc^{p,q}(\Fc)$ if
these forms are supported in a fixed compact set of $X\setminus E$  and if
$\|f_j-f\|_{\Cc^l}\rightarrow 0$ for every $l$.
A {\it (directed) current of bidegree $(p,q)$ ({\rm or equivalently,}  of bidimension $(1-p,1-q)$)} 
on $\Fc$ is a continuous
linear form on the space $\Dc^{1-p,1-q}(\Fc)$ 
with values in $\C$. 
  We often write for short $\Dc(\Fc)$ instead of  $\Dc^{0,0}(\Fc).$
  
    A form $\alpha \in \Dc^{1,1}(\Fc)$ is  said to be {\it positive} if its restriction to every plaque
 is  a  positive $(1,1)$-form in the  usual  sense.
 \begin{definition}\label{D:Directed_hamonic_currents} 
\rm  Let $T$ be a directed current of bidimension $(1,1)$ on $\Fc.$ 
    
  $\bullet$  $T$ is  said to be  {\it positive} if  $T(f)\geq 0$ for all positive forms $f\in \Dc^{1,1}(\Fc).$

$\bullet$ $T$ is  said to be  {\it   closed}   if  $d T=0$ in the  weak sense (namely,  $T(d f)=0$ for all directed forms  $f\in  \Dc^1(\Fc)$).

$\bullet$ $T$ is  said to be  {\it   harmonic}  if  $\ddc T=0$ in the  weak sense (namely,  $T(\ddc f)=0$ for all functions  $f\in  \Dc(\Fc)$).
\end{definition}

  We have the  following decomposition.
  \begin{proposition}{\rm  (see  e.g. \cite[Propositions  2.1, 2.2 and  2.3]{DinhNguyenSibony12})  }\label{P:decomposition}
   Let $T$ be  a directed harmonic current on $\Fc.$  Let $\U\simeq \B\times \T$ be  a flow  box which is relatively compact in $X.$
   \begin{enumerate}
    \item {\rm  (Existence)}
Then, there is a positive Radon measure $\mu$ on $\T$ and for $\mu$-almost every $t\in\T,$ there is a  harmonic   function $h_t$ on $\B$
   such that  if $K$  is compact in $\B,$  the integral $\int_K \|h_t\|_{L^1(K)} d\mu(t)$ is  finite and
   $$
   T(\alpha)=\int_\T\big( \int_\B  h_t(y)\alpha(y,t)  \big)d\mu(t)
   $$
   for every  form $\alpha\in\Dc^{1,1}(\Fc)$ compactly  supported  on $\U.$ 
   \item {\rm (Uniqueness)} If  $\mu'$ and $h'_t$  are associated  with  another  decomposition  of $T$ in $\U,$ then  there is  a  measurable  function $\theta>0$
   on a measurable  subset $\T'\subset \T$   such that $h_t=0$  for $\mu$-almost every $t\not\in\T',$  $h'_t=0$  for $\mu'$-almost every $t\not\in\T',$
   and $h_t=\theta(t) h'_t$ for $\nu$ and $\nu'$-almost every $t\in\T'.$  
   \item
    If  moreover, $T$ is  positive, then    for $\mu$-almost every $t\in\T,$ the  harmonic   function $h_t$ is  positive on $\B.$
   \item  If  moreover, $T$ is  closed, then    for $\mu$-almost every $t\in\T,$ the  harmonic   function $h_t$ is  constant on $\B.$
 \end{enumerate}
 \end{proposition}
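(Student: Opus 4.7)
My plan is to combine a disintegration of measures with Weyl's lemma for the leaf Laplacian. In the flow box $\U\simeq\B\times\T$, identify a directed test $(1,1)$-form $\alpha$ with the function $\phi(y,t)$ defined by $\alpha=\phi(y,t)\,i\,dy\wedge d\bar y$; note that the definition of $\Dc^{1,1}(\Fc)$ makes $T$ continuous in $t$ for the $\Cc^0$-topology, so $T$ is automatically of order zero in the transverse direction. Since the statement is local and $\U$ is relatively compact, I first treat the positive case; the general case reduces to it by writing $T=T_R+iT_I$, using the elliptic regularity of $\ddcy$ supplied by $\ddc T=0$ to conclude that each real part is also of order zero in $y$, and applying a fibrewise Jordan decomposition.

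Assume $T$ positive. Fix a smooth $\chi_0\geq 0$ with compact support in $\B$ and $\int_\B \chi_0\,i\,dy\wedge d\bar y=1$, and define a positive Radon measure $\nu$ on $\T$ by $\nu(g):=T\bigl(\chi_0(y)g(t)\,i\,dy\wedge d\bar y\bigr)$. For each $\psi\in C^\infty_c(\B)$, the map $g\mapsto T(\psi(y)g(t)\,i\,dy\wedge d\bar y)$ is a Radon measure $\mu_\psi$ on $\T$; for $0\leq\psi\leq C\chi_0$ one has $0\leq\mu_\psi\leq C\nu$, so $\mu_\psi\ll\nu$, and the Radon--Nikodym theorem yields a density $H_\psi(t)\geq 0$, linear in $\psi$. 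Fixing a countable dense family of $\psi$'s, I obtain, for $\nu$-a.e.\ $t$, a positive Radon measure $T_t$ on $\B$ such that $T=\int_\T T_t\,d\nu(t)$ in the weak sense. To upgrade $T_t$ to a harmonic density, apply $\ddc T=0$ to test functions $f(y,t)=F(y)g(t)$; along leaves $\ddc f$ reduces to $g(t)\,\ddcy F(y)$, hence
\[
0=T(\ddc f)=\int_\T g(t)\Bigl(\int_\B \ddcy F\,dT_t\Bigr) d\nu(t).
\]
Varying $g\in C_c(\T)$ and $F$ in a countable dense family yields $\ddcy T_t=0$ for $\nu$-a.e.\ $t$; Weyl's lemma then gives $T_t=h_t(y)\,i\,dy\wedge d\bar y$ with $h_t$ harmonic on $\B$, establishing~(1).

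Parts~(3) and~(4) follow quickly. Positivity of $T$ gives $\mu_\psi\geq 0$ for $\psi\geq 0$, hence $h_t\geq 0$ $\nu$-a.e., and since $h_t$ is harmonic this upgrades to pointwise non-negativity. If $T$ is closed, test $dT=0$ against $\alpha=g(t)F(y)\,dy$ and $\alpha=g(t)F(y)\,d\bar y$; along leaves $d\alpha$ equals $\pm g(t)\dbar_y F\cdot dy\wedge d\bar y$ and $\pm g(t)\partial_y F\cdot dy\wedge d\bar y$ respectively, so $T(d\alpha)=0$ forces $\dbar_y h_t=\partial_y h_t=0$ for $\nu$-a.e.\ $t$, i.e.\ $h_t$ is locally constant on $\B$. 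For~(2), uniqueness of the Radon--Nikodym derivative identifies $d\nu'/d\nu$ on the common support $\T'$ with the scalar factor $\theta(t)$ relating $h_t$ to $h'_t$.

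The main technical obstacle is the very first step: reducing to the positive case and showing that the fibrewise measures $T_t$ exist and depend measurably on $t$. This rests crucially on the elliptic regularity of $\ddcy$ supplied by $\ddc T=0$; without it, $T$ would only produce fibrewise distributions rather than Radon measures, and the Radon--Nikodym argument would fail at the outset. Once the fibrewise Radon measures are available, the remaining arguments---Weyl's lemma for harmonicity, positivity preservation, and the uniqueness in~(2)---are essentially formal.
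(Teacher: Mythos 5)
The paper offers no proof of this proposition: it is quoted verbatim from Dinh--Nguy\^en--Sibony \cite{DinhNguyenSibony12}, so there is no in-paper argument to compare against. Your sketch follows the standard route of that reference (transverse disintegration via Radon--Nikodym derivatives, then leafwise testing of $\ddc T=0$ against product functions $F(y)g(t)$ and Weyl's lemma), and for \emph{positive} currents parts (1), (3), (4) and the uniqueness statement are handled correctly, modulo routine points (an exhaustion of $\B$ to cover test functions not dominated by your fixed $\chi_0$, and the Lebesgue decomposition of $\nu'$ against $\nu$ in part (2)).

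Two points in your reduction of the general case to the positive one need repair. First, ``elliptic regularity of $\ddcy$'' is only a name for the key step, not an argument: the honest way to see that a (not necessarily positive) directed harmonic current has order zero along the leaves is to convolve $T$ leafwise with a radial mollifier $\rho_\epsilon$ and use the mean-value property of leafwise-harmonic distributions to show that $T\ast\rho_\epsilon$ is independent of $\epsilon$ on slightly smaller plaques; hence $T$ has continuous leafwise coefficients. Second, the proposed ``fibrewise Jordan decomposition'' would fail as stated: the positive and negative parts of $\Re T$ are not themselves harmonic, so they cannot be fed back into the positive-case machinery. The correct continuation, once order zero is established, is to disintegrate the complex order-zero current directly against the pushforward to $\T$ of its total variation; the same testing argument with $f=F(y)g(t)$ then yields $\ddcy T_t=0$ for the fibrewise complex measures, and Weyl's lemma applies. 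With these two repairs the argument is complete and coincides with the proof in the cited reference.
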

  
 \begin{definition} \label{D:diffuse}\rm 
  A directed positive harmonic current $T$ on $\Fc=(X,\Lc,E)$  is said to be  {\it diffuse}  if   for any  decomposition of $T$ in any flow box  $\U\simeq \B\times \T$
  as in Proposition \ref{P:decomposition}, the  measure $\nu$ has no mass on each single point of the transversal $\T.$
 \end{definition}

For a complex  manifold $M,$ let $\Dc^{1,1}(M)$ denote the space of smooth $(1,1)$-forms $\alpha$  compactly supported in $M$ endowed with the  semi-norms $\| \cdot\|_{\Cc^l(M_k)},$   where $l\in\N$ and $(M_k)_{k=0}^\infty$ is an increasing sequence of relatively compact open subsets of $M$ such that $M=\bigcup_{k=0}^\infty M_k.$   
For  $x\in X\setminus E,$ let $j_x:\  L_x\hookrightarrow X$   be   the canonical injective immersion   from $L_x$ into $ X. $  
The aggregate of  the  pull-back  via $j_x$  of  each test form $\alpha\in  \Dc^{1,1}(X\setminus E)$  defines
a form in  $\Dc^{1,1}(\Fc)$ denoted by $j^*\alpha.$
So  we obtain  a canonical restriction  map $$j^*:\   \Dc^{1,1}(X\setminus E)\to \Dc^{1,1}(\Fc)\qquad\text{given by}\qquad  \alpha\mapsto  j^*\alpha.$$
We see  easily that  the image $\Ic$  of $j^*$  is dense in  $\Dc^{1,1}(\Fc).$ 

The original  notions  of  directed positive $\ddc$-closed currents for  singular  holomorphic foliations (resp. for singular laminations which are holomorphically  immersed in a complex  manifold)
with  a small set of singularities were  introduced by  Berndtsson-Sibony \cite{BerndtssonSibony} 
(resp.  by Forn{\ae}ss-Sibony \cite{FornaessSibony05,FornaessSibony08}).
In  \cite{NguyenVietAnh20b}  we give   another   notion of directed positive $\ddc$-closed currents for singular Riemann surface  laminations which are holomorphically immersed in a complex  manifold.
  Our  notion coincides with the previous ones  when  the lamination is $\Cc^2$-transversally smooth, this   condition is  automatically  fulfilled when $\Fc$ is a singular holomorphic foliation.  The  advantage  of our (slighly improved) notion is that
  it is  relevant even  when the set of singularities is  not small.
  We  recall our definition in the   present context  of  singular holomorphic  foliations. 

 \begin{definition}\label{D:Directed_hamonic_currents_with_sing}   \rm 
Let  $\Fc=(X,\Lc,E)$   be  a   singular  holomorphic foliation.
A {\it directed positive $\ddc$-closed  current}  (resp.  a  {\it directed  positive closed current}) on $\Fc$ is  a  positive $\ddc$-closed current $T$  (resp.   a  positive closed current $T$)
of bidimension $(1,1)$ on $X$ such that 
the following properties (i)-(ii)  are satisfied:
\begin{itemize}  
 \item [(i)]  $T$  does not give mass to $E,$  i.e. the mass $\|T\|_E$  of $T$ on $E$  is  zero;
\item[(ii)]
 $T$   is  a  directed positive harmonic  current (resp.  a  directed positive closed  current) on $\Fc$  in the sense of Definition \ref{D:Directed_hamonic_currents}.
\end{itemize}
Moreover, we say that $T$ is  {\it diffuse}  if  it is  diffuse in the sense of Definition \ref{D:diffuse} as   a  directed positive harmonic  current (resp.  a  directed positive closed  current) on $\Fc.$ 
  \end{definition}

  \begin{remark}\label{R:two-notions-in-comparison} \rm 
   When $E=\varnothing,$ property (i) of Definition \ref{D:Directed_hamonic_currents_with_sing} is trivially satisfied and property (ii) says that   a directed positive $\ddc$-closed  current
   $T$  may be regarded as a positive harmonic  current.  The converse  statement (when $E=\varnothing$) is  also  true, see Subsection \ref{SS:Lelong-harmonic-currents} below,
  \end{remark}

  \begin{remark}\rm  \label{R:explanation}  
  Property (ii) of Definition \ref{D:Directed_hamonic_currents_with_sing}
 means  the  following two properties (ii-a)-(ii-b):
\begin{itemize}\item[(ii-a)] $\langle T,\alpha\rangle=\langle T,\beta\rangle$ for $\alpha,\beta\in  \Dc^{1,1}(X\setminus E)  $ such  that $j^*\alpha=j^*\beta;$
 so the current $$\widetilde T:\ \Ic\to\C\quad\text{given by}\quad \langle \widetilde T, j^*\alpha\rangle:=  \langle T,\alpha\rangle\quad \text{for}\quad \alpha\in \Dc^{1,1}(X\setminus E),  $$
  is  well-defined;
 \item[(ii-b)]  the current $\widetilde T$ defined in  (ii-a) can be uniquely   extended from $\Ic$ to   $\Dc^{1,1}(\Fc)$ by continuity  (as $\Ic$ is  dense in  $\Dc^{1,1}(\Fc)$)  to a current
 $ \widehat T $ of order zero, and $\widehat T$ is 
 a   directed positive harmonic  current (resp.   directed positive closed  current) on $\Fc$  in the sense of Definition \ref{D:Directed_hamonic_currents}.
 \end{itemize}
 Property (ii-b)  holds  automatically since  $T$ is a positive $\ddc$-closed current (resp.  positive closed current)  on $X.$ So property (ii) is  equivalent to  the  single property (ii-a).
 If there is no confusion,   we often denote $\widetilde T$ and $\widehat T$ simply by $T.$
  \end{remark}

  As an immediate  consequence  of Proposition \ref{P:decomposition} and  Definition   \ref{D:Directed_hamonic_currents_with_sing}, we obtain  the following characterization  of  directed  positive
  $\ddc$-closed  currents  directed by a singular  holomorphic foliation.
  \begin{proposition}\label{P:decomposition-bis}
   Let  $\Fc=(X,\Lc,E)$   be  a   singular  holomorphic foliation and   $T$   a  positive  $\ddc$-closed  current (resp.  a   positive  closed  current) of bidimension $(1,1)$  on $X.$
   Then  $T$ is a  directed positive $\ddc$-closed  current  (resp.  a  directed  positive closed current) on $\Fc$ if and only  if
the following properties (i)-(ii)'  are satisfied:
\begin{itemize}  
 \item [(i)]  $T$  does not give mass to $E,$  i.e. the mass $\|T\|_E$  of $T$ on $E$  is  zero;
\item[(ii)']
   For any  flow box $\U\simeq \B\times \T$  which is relatively compact in $X,$
   there is a positive Radon measure $\mu$ on $\T$ and for $\mu$-almost every $t\in\T,$ there is a  harmonic   function $h_t$ on $\B$
   such that  if $K$  is compact in $\B,$  the integral $\int_K \|h_t\|_{L^1(K)} d\mu(t)$ is  finite and
   $$
   T(\alpha)=\int_\T\big( \int_\B  h_t(y)\alpha(y,t)  \big)d\mu(t)
   $$
   for every  form $\alpha\in\Dc^{1,1}(X)$ compactly  supported  on $\U.$ 
  \end{itemize}
  Moreover,   if  $T$ is  positive $\ddc$-closed  current   (resp.   positive closed current), then    for $\mu$-almost every $t\in\T,$ the  harmonic   function $h_t$ is  positive on $\B$
    (resp.  the  harmonic   function $h_t$ is  constant on $\B$).
  
  \end{proposition}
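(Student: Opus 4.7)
The plan is to derive Proposition \ref{P:decomposition-bis} by juxtaposing Definition \ref{D:Directed_hamonic_currents_with_sing} with Proposition \ref{P:decomposition}. Since $T$ is a positive $\ddc$-closed (resp.\ positive closed) current of bidimension $(1,1)$ on $X$ and since every flow box of $\Fc$ is contained in $X\setminus E$, the equivalence reduces to showing that, once condition (i) is in force, the ``directed'' requirement in Definition \ref{D:Directed_hamonic_currents_with_sing}(ii) is equivalent to the local integral representation (ii)'.

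For the forward direction, assume $T$ is a directed positive $\ddc$-closed current. Then (i) is part of the hypothesis, and Remark \ref{R:explanation} furnishes an extension $\widehat T$ of $T$ to $\Dc^{1,1}(\Fc)$ which is a directed positive harmonic current on $\Fc$ in the sense of Definition \ref{D:Directed_hamonic_currents}. Applying Proposition \ref{P:decomposition} to $\widehat T$ in the flow box $\U\simeq\B\times\T$ produces the Radon measure $\nu$ on $\T$ and the positive harmonic functions $h_t$ on $\B$ with the required integrability, and yields the decomposition formula for forms in $\Dc^{1,1}(\Fc)$ compactly supported in $\U$. For $\alpha\in\Dc^{1,1}(X)$ compactly supported in $\U\subset X\setminus E$, one has $\alpha\in\Dc^{1,1}(X\setminus E)$ and $\langle T,\alpha\rangle=\langle\widehat T,j^*\alpha\rangle$, which equals the integral in (ii)'.

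For the backward direction, assume (i) and (ii)'. The crux is property (ii-a) of Remark \ref{R:explanation}: if $\alpha,\beta\in\Dc^{1,1}(X\setminus E)$ satisfy $j^*\alpha=j^*\beta$, then $\langle T,\alpha\rangle=\langle T,\beta\rangle$. Cover the supports of $\alpha$ and $\beta$ by finitely many flow boxes of $\Fc$ and use a subordinate partition of unity to reduce to forms supported in a single $\U\simeq\B\times\T$. Applying (ii)' in this $\U$ expresses $\langle T,\alpha\rangle$ as an integral whose integrand depends on $\alpha$ only through its restrictions to the plaques, i.e., only through $j^*\alpha$; since $j^*\alpha=j^*\beta$, the two values coincide. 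This defines $\widetilde T$ on $\Ic$, and since $T$ has order zero (being positive) and $\Ic$ is dense in $\Dc^{1,1}(\Fc)$, $\widetilde T$ extends by continuity to a current $\widehat T$ on $\Dc^{1,1}(\Fc)$. Positivity and harmonicity (resp.\ closedness) of $\widehat T$ on directed test forms then follow directly from (ii)' together with the positivity and harmonicity (resp.\ constancy) of each $h_t$, so $T$ meets Definition \ref{D:Directed_hamonic_currents_with_sing}. The ``moreover'' clauses on positivity of $h_t$ and on constancy in the closed case are immediate transcriptions of parts (3) and (4) of Proposition \ref{P:decomposition}.

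The main subtlety I anticipate is the partition-of-unity step in the backward direction: the local decompositions in distinct flow boxes are not unique in isolation, being determined only up to the measurable rescaling $\theta(t)$ permitted by Proposition \ref{P:decomposition}(2). However, this rescaling preserves the value of the integral $\int_\T\int_\B h_t(y)\alpha(y,t)\,d\nu(t)$ for every fixed $\alpha$; consequently, the partition-of-unity sum is well defined and reproduces the globally defined number $\langle T,\alpha\rangle$. Once this invariance is noted, the remainder of the argument is a direct translation between the ``decomposition along plaques'' description of Proposition \ref{P:decomposition} and the ``continuous extension via $j^*$'' description of Remark \ref{R:explanation}.
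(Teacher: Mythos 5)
Your proposal is correct and follows essentially the same route as the paper, which presents this proposition as an immediate consequence of Proposition \ref{P:decomposition}, Definition \ref{D:Directed_hamonic_currents_with_sing}, and the equivalence (ii) $\Leftrightarrow$ (ii-a) recorded in Remark \ref{R:explanation}; your write-up simply makes the forward/backward directions and the partition-of-unity localization explicit. The ``subtlety'' you flag about the rescaling $\theta(t)$ is in fact harmless for the reason you give: each local integral representation computes the globally defined number $\langle T,\chi_i\alpha\rangle$, so the choice of decomposition never enters.
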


  
  \subsection{Lelong number of positive  harmonic currents}\label{SS:Lelong-harmonic-currents}
  
  Let $\Fc=(X,\Lc,E)$ be a  singular holomorphic foliation and 
   $T$   a  positive  harmonic  current.  By  Proposition  \ref{P:decomposition} (1),  we  see that  
 $$ T'(\alpha):=  T(j^*\alpha)\qquad\text{for}\qquad \alpha\in \Dc^{1,1}(X\setminus E)  
 $$
 is   a positive  $\ddc$-closed  current of bidimension $(1,1)$ on  $X\setminus E.$  In fact, Remark \ref{R:two-notions-in-comparison}
 says  that  outside the singularities,  directed positive  $\ddc$-closed currents  coincide with  positive harmonic currents.
 In other words, $T'$ may be canonically  identified with $T$  outside $E.$

 {\it The mass of  $T$
on a (measurable) set $A,$} denoted by $\|T\|_A,$  is   the mass of $T$ (as a  positive $\ddc$-closed current) on $A\setminus E.$
{\it The mass} $\|T\|$ of $T$ is the total mass of  $T$ on $X.$ 

 For every $x\in X\setminus E,$    we can define  
 \begin{equation}\label{e:Lelong_bis}
 \nu(T,x):=\nu(T',x), 
 \end{equation}
where the right-hand side is  given by  \eqref{e:Lelong} with $p=n-1.$

  For  $x\in E,$  we cannot use  \eqref{e:Lelong}  immediately because $T$ may not be  extended through an open neighborhood of $x$ as  a positive $\ddc$-closed current.  
  However, following  the model formula \eqref{e:Lelong}
  we still define  
  \begin{equation}\label{e:Lelong_bisbis}
 \nu(T,x):= \limsup_{r\to 0+}{1\over  \pi r^2}\int_{\B(x,r)\setminus E} T\wedge ( \ddc\|z\|^2)\in\R^+\cup\{\infty\}.
 \end{equation}
 as the Lelong nummber  of $T$ at $x.$
Here, we identify, through a local coordinate $z,$ a neighborhood
of $x$ in $X$ to an open neighborhood  of $0$ in  $\C^n,$    and $\B(x,r)$ is thus identified with 
the Euclidean ball in $\C^n$ with center $0$ and radius $r.$  
 A priori, the  Lelong
number $\nu(T,x)$ may depend on  
the choice of   local  coordinates  near $x.$ However, it is  easy to check that
if $\nu(T,x)$ is  equal to either $0$ or $\infty,$ then it is  independent of local  coordinates  near $x.$

  \begin{example}
   \rm
   Let 
   $ \Fc:=(\D^2,\Lc,\{0\})$ be a singular holomorphic foliation associated to a linear vector field in $\D^2.$
   Consider  the current $T(z):=(-\log |z_2|)\cdot [z_1=0]$  for $z=(z_1,z_2)\in\D^2\setminus \{0\}.$
   Here $[z_1=0]$ is the current of integration on the complex line $\{z_1=0\}.$
   We can check that $T$ is a positive harmonic current for $\Fc,$ but $\nu(T,0)=\infty.$
    In fact, the simple extension $\widehat T$  of $T$  through $\{0\}$  satisfies $\ddc \widehat T=-\delta_0,$ where $\delta_0$ is  the Dirac mass at the origin.
    So  $\widehat T$ is  positive, but not  $\ddc$-closed.
  \end{example}

  \subsection{Green function and Poisson kernel}
  
 \begin{definition} \label{D:Green} \rm Let $\Omega \subset \C$ be either a (not necessarily bounded) convex polygon  or a  $\Cc^2$-smooth bounded domain. The {\it diagonal} $\Delta$ of $\Omega$
 is   the  set $\{(\zeta,\zeta):\  \zeta\in\Omega\}\subset\Omega\times\Omega.$  A function $G : (\Omega \times \overline\Omega) \setminus \Delta \to\R$ is {\it the Green's
function}  on $\Omega$ if:
\begin{enumerate}
\item  for each fixed $\zeta \in \Omega$ the function $ G (\zeta, \xi) +{1\over2\pi}\log|\xi - \zeta|$ is harmonic as a
function of $\xi\in\Omega$ (even at the point $\zeta$);
\item $ G(\zeta, \xi)|_{\xi\in\partial\Omega}= 0$ for each fixed $\zeta\in\Omega.$
\end{enumerate}
\end{definition}
Let $\Nor=\Nor_\Omega$ represent the unit outward normal vector field on $\partial \Omega.$  The  following classical  result gives the {\it Poisson kernel} of bounded smooth  domains in $\C$
(see e.g.  \cite{Krantz}).
\begin{proposition}\label{P:Poisson-classic}
 Let $\Omega\subset \C$ be   a  $\Cc^2$-piecewise smooth bounded domain. Then
 \begin{enumerate}
  \item There is a unique  Green  function $G$ on $\Omega.$   
  \item 
Let   the Poisson kernel  on $\Omega$ be the function
 \begin{equation*}
  P(\zeta,\xi)=P_\Omega(\zeta,\xi):= -\Nor_\xi G(\zeta,\xi)\qquad\text{for}\qquad  \zeta\in\Omega,\ \xi\in\partial \Omega.
 \end{equation*}
 Then  the following  two assertions  hold:
 \begin{itemize}
\item[(2-i) ]If $u\in\Cc(\overline\Omega)$ is  harmonic  on $\Omega,$ then
\begin{equation*}
 u(\zeta)=\int_{\partial\Omega} P(\zeta,\xi)u(\xi)d\Leb_1(\xi)\qquad\text{for}\qquad  \zeta\in\Omega.
\end{equation*}
Here, $d\Leb_1$ is the $1$-dimensional Lebesgue measure on $\partial \Omega.$
\item[(2-ii)]  $P(\cdot,\xi)$ is a positive harmonic function   on $\Omega$  when $\xi\in\partial\Omega$ is  fixed.
\end{itemize}
 \end{enumerate}
\end{proposition}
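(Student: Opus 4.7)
The plan is to treat this classical result in two movements: first construct and characterize the Green function, then derive the Poisson representation by Green's identity applied to an excised domain.

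For uniqueness in (1), if $G_1,G_2$ both satisfy the defining properties, then $G_1(\zeta,\cdot)-G_2(\zeta,\cdot)$ is harmonic on $\Omega$ (the logarithmic singularities cancel), continuous on $\overline\Omega$, and vanishes on $\partial\Omega$; the maximum principle forces it to be zero. For existence, I would fix $\zeta\in\Omega$ and solve the Dirichlet problem on $\Omega$ with the continuous boundary data $\xi\mapsto\tfrac{1}{2\pi}\log|\xi-\zeta|$: since $\Omega$ is $\mathcal{C}^2$-smooth, every boundary point is regular (an exterior ball can be placed), so a unique harmonic solution $h_\zeta\in\mathcal{C}(\overline\Omega)$ exists. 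Setting $G(\zeta,\xi):=-\tfrac{1}{2\pi}\log|\xi-\zeta|+h_\zeta(\xi)$ gives the Green function. Standard boundary regularity for the Dirichlet problem in $\mathcal{C}^2$ domains (e.g.\ via barrier methods or the Kellogg-type results) upgrades $G(\zeta,\cdot)$ to $\mathcal{C}^1$ up to $\partial\Omega$ away from the pole, which is what is needed to make sense of $\Nor_\xi G(\zeta,\xi)$ pointwise on $\partial\Omega$.

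For (2), I first note that $G(\zeta,\cdot)>0$ on $\Omega\setminus\{\zeta\}$: it is harmonic off the pole, tends to $+\infty$ at $\zeta$, and vanishes on $\partial\Omega$, so the minimum principle applies. Consequently the outward normal derivative $\Nor_\xi G(\zeta,\xi)\le 0$, and $P_\Omega(\zeta,\xi)=-\Nor_\xi G(\zeta,\xi)\ge 0$. To establish the integral representation for $u\in\mathcal{C}(\overline\Omega)$ harmonic in $\Omega$, I fix $\zeta\in\Omega$ and $\varepsilon>0$ small with $\overline{B(\zeta,\varepsilon)}\subset\Omega$, and apply Green's second identity to $u$ and $G(\zeta,\cdot)$ on $\Omega_\varepsilon:=\Omega\setminus\overline{B(\zeta,\varepsilon)}$. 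Both functions are harmonic on $\Omega_\varepsilon$, so the bulk integral vanishes and one is left with
\begin{equation*}
0=\int_{\partial\Omega}\bigl(u\,\Nor G-G\,\Nor u\bigr)\,d\Leb_1-\int_{\partial B(\zeta,\varepsilon)}\bigl(u\,\Nor_\zeta G-G\,\Nor_\zeta u\bigr)\,d\Leb_1,
\end{equation*}
where $\Nor_\zeta$ denotes the outward normal to $B(\zeta,\varepsilon)$. On $\partial\Omega$ one has $G=0$, collapsing the first integral to $-\int_{\partial\Omega}u\,P_\Omega\,d\Leb_1$. On $\partial B(\zeta,\varepsilon)$, the asymptotics $G(\zeta,\xi)=-\tfrac{1}{2\pi}\log|\xi-\zeta|+h_\zeta(\xi)$ give $\Nor_\zeta G=-\tfrac{1}{2\pi\varepsilon}+O(1)$, so $\int u\,\Nor_\zeta G\,d\Leb_1\to -u(\zeta)$ as $\varepsilon\to 0$, while $\int G\,\Nor_\zeta u\,d\Leb_1=O(\varepsilon\log\varepsilon)\to 0$. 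Passing to the limit yields $u(\zeta)=\int_{\partial\Omega}P_\Omega(\zeta,\xi)u(\xi)\,d\Leb_1(\xi)$.

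The only step that is not purely formal is verifying that $G(\zeta,\cdot)$ is smooth enough near $\partial\Omega$ for the normal derivative to exist and for the boundary integral to make sense as a genuine Lebesgue integral; this is where the $\mathcal{C}^2$-smoothness of $\partial\Omega$ is genuinely used, through classical boundary regularity theory for the Dirichlet problem. All other steps—uniqueness, positivity of $P_\Omega$, and the limiting Green-identity argument—are uniform in flavor and independent of the specific geometry of $\Omega$.
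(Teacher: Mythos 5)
The paper gives no proof of this proposition; it is quoted as classical with a pointer to Krantz, so there is no in-text argument to measure yours against. Your route is the standard one (solve the Dirichlet problem and invoke the maximum principle for existence and uniqueness of $G$, then apply Green's second identity on the excised domain $\Omega\setminus\overline{B(\zeta,\varepsilon)}$ and let $\varepsilon\to 0$), and nearly all of it is sound; in particular you correctly isolate the boundary regularity of $G(\zeta,\cdot)$ as the point where the $\Cc^2$-smoothness of $\partial\Omega$ is genuinely used.

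There is, however, one genuine gap. The Green's identity step requires the normal derivative $\Nor u$ on $\partial\Omega$, yet the hypothesis is only that $u\in\Cc(\overline\Omega)$ is harmonic in $\Omega$. Such a $u$ need not be differentiable up to the boundary (take the harmonic extension of a continuous, nowhere differentiable boundary datum), so the term $\int_{\partial\Omega}G\,\Nor u\,d\Leb_1$ is undefined and the identity cannot be applied on $\Omega\setminus\overline{B(\zeta,\varepsilon)}$ as written: your argument, as it stands, proves the representation only for $u$ that are $\Cc^1$ up to $\partial\Omega$. The standard repair is a second limiting argument. Either establish the formula first for boundary-regular $u$ and then exhaust $\Omega$ by the subdomains $\Omega_t:=\{x\in\Omega:\ \dist(x,\partial\Omega)>t\}$, on whose closures $u$ is smooth, applying the formula there and letting $t\to 0$ with uniform control of the kernels $P_{\Omega_t}$; or show directly that $\zeta\mapsto\int_{\partial\Omega}P(\zeta,\xi)u(\xi)\,d\Leb_1(\xi)$ is harmonic in $\Omega$ and extends continuously to $\overline\Omega$ with boundary values $u|_{\partial\Omega}$ (the key facts being $\int_{\partial\Omega}P(\zeta,\xi)\,d\Leb_1(\xi)=1$ and the concentration of the measure $P(\zeta,\cdot)\,d\Leb_1$ at $\xi_0$ as $\zeta\to\xi_0$), and conclude by uniqueness of the Dirichlet problem. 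Either route closes the gap; without one of them the stated generality of assertion (2) is not reached.
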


\section{Geometry of singular flow boxes}\label{S:Geometry}
    
Let $\U$ be  an open  neigborhood of the closed  $n$-unit polydisc  $\overline\D^n.$    
Consider the foliation $\Fc=(\U,\Lc,\{0\})$ which is the restriction to $\U$ of the foliation associated to the vector field
\begin{equation}\label{e:local-model}\Phi(z)=\sum_{j=1}^n \lambda_jz_j{\partial\over \partial z_j},\qquad  z\in\C^n,
 \end{equation}
with $\lambda_j\in\C^*$.  The foliation is singular at the origin. 
We often  call $\Fc$ a {\it  local model} of   a linearizable  singularity.

    Write $\lambda_j=a_j+ib_j$ with $a_j,b_j\in\R$.  
For $x=(x_1,\ldots,x_n)\in \U\setminus\{0\}$, define the holomorphic map $\varphi_x:\C\rightarrow\C^n\setminus\{0\}$ by
\begin{equation}\label{e:varphi_x}
\varphi_x(\zeta):=\Big(x_1e^{\lambda_1\zeta},\ldots,x_ne^{\lambda_n\zeta}\Big)\quad \mbox{for}\quad \zeta\in\C.
\end{equation}
It is easy to see that $\varphi_x(\C)$ is the integral curve of $\Phi$ which contains
$\varphi_x(0)=x$. 
The leaf of $\Fc$ through $x$ is given by $  L_x:=\varphi_x(\C)\cap \U$.

Write $\zeta=u+iv$ with $u,v\in\R$. Fix $x=(x_1,\ldots,x_n)\in \U\setminus\{0\}.$
Let $ j=1,\ldots,k.$    If $x_j\not=0,$  let $\H_{x,j}$ be the   open half-plane  defined by 
 \begin{equation}\label{e:H_x,j}
\H_{x,j}:=\{\zeta=u+iv\in\C:\ -a_ju+b_jv -\log|x_j|>0\}=  \{ \zeta\in\C:\ |\varphi_x(\zeta)_j|<1    \},
\end{equation}
where $\varphi_x(\zeta)_j$ is  the $j$-th coordinate  of $ \varphi_x(\zeta)\in\C^n.$
If  $x_j=0,$ then   set simply  $ \H_{x,j}:=\C.$
The {\it phase space} of  a  point $x\in \overline\D^n\setminus \{0\}$   is  the domain $\Pi_x:=\varphi_x^{-1}(\D^n)$ in $\C.$   Observe that
\begin{equation}\label{e:phase-space} \Pi_x=\bigcap_{j=1}^n\H_{x,j}.\end{equation}
\begin{remark}\label{R:m(x)-gon} \rm
So, $\Pi_x$ is a convex $m(x)$-gon which is not necessarily
bounded, where $0\leq m(x)\leq n$ is  an integer depending only on $x.$ 
Moreover, for $x,x'\in \overline{\D}^n\setminus \{0\}$ and $1\leq j\leq n$ such that $x_j\not=0,$ $x'_j\not=0,$
$\partial \H_{x,j}$ is either equal to  or  parallel to $\partial \H_{x',j}.$

When  $\{0\}$ is a  weakly hyperbolic singularity, there are at least  two  edges of  $\Pi_x$  which are not parallel.
In particular  when  $n=2$ and $\{0\}$ is a  weakly hyperbolic singularity, the geometry of phase spaces  are  very simple:  $\Pi_x$ is  a sector  and  $m(x)=1$ and for every $y\in \overline\D^n\setminus \{0\},$ there is a unique  translation ${\mathcal T}_{x,y}$ on the plane
such that
$\Pi_y={\mathcal T}_{x,y} (\Pi_x).$   

 All possible  phase spaces in dimension $n=3$  are illustrated in 
Figures \ref{fig:M2} and  \ref{fig:M3}.
\end{remark}

%

\begin{figure}
\centering
\begin{minipage}{.2\textwidth}
\begin{tikzpicture}
\scalebox{0.8}{
\draw (0,0)  node [below] {$A$};
\draw (4,0)  node [below] {$B$};
\node at (4,0) {$\bullet$};
\draw [dashed] (0,0)--(4,0);
\draw (2,2*1.732)  node [left] {$C$};
\node at (2,2*1.732) {$\bullet$};
\draw  [->] (4,0)--(6,0) node [right] {$x$};
\draw  [->] (2,2*1.732)--(3,3*1.732) node [above] {$y$};
  \draw (4,0)--(2,2*1.732);
     \draw [dashed] (0,0)--(2,2*1.732); 
      \draw[->,dashed] (0:1cm)  arc  (0: 60):1cm);
      }
\end{tikzpicture}
 \end{minipage}
\qquad
\qquad
\qquad
\qquad
\begin{minipage}{.2\textwidth}
\begin{tikzpicture}
\scalebox{0.8}{
\draw (0,0) node [below] {$A$}; 
\node at (0,0) {$\bullet$};
     \draw  [->] (0,0)--(6,0) node [right] {$x$};
      \draw  [->] (0,0)--(3,3*1.732) node [above] {$y$};
      \draw[->] (0:1cm)  arc  (0: 60):1cm);}
\end{tikzpicture}
\end{minipage}

\caption{The phase spaces of a  foliation with a hyperbolic  singularity  in dimension $n=3:$ the first (the unbounded polygon $yCBx$) is unique modulo the composition of  a translation and a dilation, whereas the  second (the  sector $xAy$)
is unique modulo a translation.} \label{fig:M3}
\end{figure}
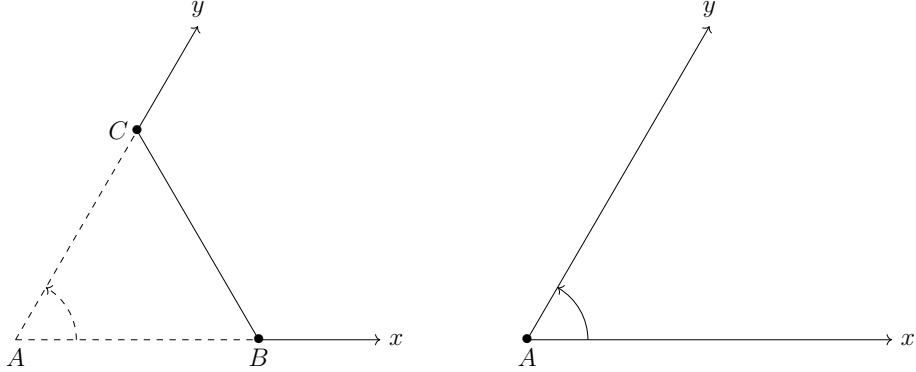

 For  $x\in  \overline\D^n\setminus\{0\} $ consider   
 \begin{equation}\label{e:mathcal-leaf} \mathcal L_x:=\varphi_x(\Pi_x)= L_x\cap  \D^n.
  \end{equation}
 In particular, it is a leaf of the  restriction foliation  $\Fc|_{\D^n },$  where  
 $    \Fc|_{\D^n }:=(\D^n,\Lc|_{\D^n},\{0\})           .$

We use  the standard Euclidean metric $\dist$ on $\C,$  that is,  for $\zeta,\xi\in\C$ and for $A,B\subset\C,$
$$\dist(\zeta,\xi):=|\zeta-\xi|\qquad\text{and}\qquad \dist(A,B):=\inf\limits_{\zeta\in A,\xi\in B}\dist(\zeta,\xi) .$$
Let $\zeta\in\Pi_x$ and  $1\leq j\leq n.$ 
If  $x_j\not=0,$ let $\dist_{x,j}(\zeta)$  be the distance from  $\zeta$ to the real line $\partial\H_{x,j},$
that is,
\begin{equation}\label{e:dist-x_j}
\dist_{x,j}(\zeta):= \dist (\zeta, \partial\H_{x,j} ) =|\lambda_j|^{-1}\big|-a_ju+b_jv -\log|x_j|\big |.
\end{equation}
If $x_j=0,$ set simply $\dist_{x,j}(\zeta):=\infty.$

Note that if  $y=(y_1,\ldots,y_n):=\varphi_x(\zeta),$ then 
\begin{equation}
 \label{e:dist-x_j-vs-y_j}
 \log{|y_j|}:=|\lambda_j|\dist_{x,j}(\zeta).
\end{equation}
Note that $\Pi_x$ contains $0$ since $\varphi_x(0)=x.$  Moreover,  for $x\in \overline\D^n,$
\begin{equation}\label{e:origin-distance}
\dist(0,\partial \Pi_x)=\min \Big\lbrace -{\log|x_1|\over |\lambda_1|},\ldots,  -{\log|x_n|\over |\lambda_n|}  \Big\rbrace. 
\end{equation}
For each $x\in\overline\D^n$ and $\zeta\in\overline\Pi_x,$  there is a  permutation  $\{k_1,\ldots,k_n\}$ of $\{1,\ldots,n\}$ such that
$$
|\lambda_{k_1}|\dist_{x,k_1}(\zeta)\leq |\lambda_{k_2}|\dist_{x,k_2}(\zeta)\leq \ldots\leq|\lambda_{k_n}|\dist_{x,k_n}(\zeta).
$$
Define
\begin{equation}\label{e:dist_x}
 \dist_x(\zeta):= 1+|\lambda_{k_1}|\dist_{x,k_1}(\zeta)\qquad\mbox{and}\qquad 
 \dist^\star_x(\zeta):= 1+|\lambda_{k_n}|\dist_{x,k_n}(\zeta).
\end{equation}
So we obtain  the  following useful estimates:
\begin{equation}\label{e:dist_x-bis}
\min\limits_{1\leq k\leq n}|\lambda_k| \dist(x,\partial \Pi_x)=\dist_x(\zeta)-1 \leq \max\limits_{1\leq k\leq n}|\lambda_k| \dist(x,\partial \Pi_x)\quad\text{and}\quad 
\dist_x(\zeta)\leq  \dist^\star_x(\zeta).
\end{equation}
 The first inequality of  implies  that
 \begin{equation}\label{e:dist_x-bisbis}
\big( \min\limits_{1\leq k\leq n}|\lambda_k|)\cdot \dist(x,\partial \Pi_x)\leq \dist_x(\zeta)-1 \leq \big( \max\limits_{1\leq k\leq n}|\lambda_k|\big)\cdot  \dist(x,\partial \Pi_x).
\end{equation}


Observe that when the ratio $\lambda_i/\lambda_j$ are not all rational and all the coordinates of $x$ do not vanish, 
$\varphi_x:\Pi_x\to\mathcal  L_x$ is bijective and hence $ \mathcal L_x$ is simply connected. Otherwise, when the ratios $\lambda_i/\lambda_j$ are all rational, all the leaves of $\Fc|_{\D^n}$
are closed submanifolds of $\D^k\setminus\{0\}$ and are biholomorphic to annuli.



Let $\I$  denote the quotient
of $(\overline\D\setminus \{0\})^n$ by the equivalence relation $x \sim  y$  if  $\mathcal L_x=\mathcal L_y$  for  $x,y\in (\overline\D\setminus \{0\})^n.$
Let $[x]$  be  the class of  $x$  in this   equivalence  relation.
Let $\pi:\  (\overline\D\setminus \{0\})^n\to \I$ by the canonical  projection  given by  $\pi(x):=[x],$ $x\in  (\overline\D\setminus \{0\})^n.$ 
We  endow $\I$ with  the complex  structure  induced  from  $\C^n.$
\begin{lemma}\label{L:quotient}
Suppose  that $\{0\}$ is  weakly  hyperbolic.  Then:
\begin{enumerate}
\item $\I$ is a  complex  manifold of dimension $n-1.$ 
In particular, when $n=2,$
$\I$ is a complex torus of dimension 1. 
\item  For  every point  $x\in(\overline\D\setminus \{0\})^n,$ there is  a  neighborhood $\U_x$  of $x$   such that  $\U_x$ is  a   flow box $\U_x\simeq \B_x\times \T_x$ 
and that  the restriction $\pi|_{\T_x}$ of 
$\pi$  on the  
transversal $\T_x$    is  biholomorphic onto  its image which is open  subset of $\I.$  In other  words, $\pi$ is locally   biholomorphic in the  transveral  direction.
\end{enumerate}
\end{lemma}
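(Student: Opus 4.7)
The plan is to prove Part (2) first; Part (1) then follows by transferring the complex structures through the transversal charts, with the torus description for $n=2$ handled separately.

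\emph{Step 1 (Flow box).} Fix $x \in (\overline\D\setminus \{0\})^n$. Since $\Phi(x) \neq 0$ and $\Phi$ is holomorphic, the holomorphic rectification theorem furnishes a neighbourhood $\U_x$ of $x$ and a biholomorphism $\U_x \simeq \B_x \times \T_x$ sending plaques to $\B_x \times \{t\}$, where $\B_x \subset \C$ is a small disc and $\T_x \subset \C^{n-1}$ a small polydisc. I identify $\T_x$ with the local transversal $\{0\} \times \T_x$ through $x$.

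\emph{Step 2 (Injectivity of $\pi|_{\T_x}$ --- the main obstacle).} Suppose, for contradiction, that after every shrinking of $\T_x$ there remain sequences $y_k, z_k \in \T_x$ with $y_k \neq z_k$, $y_k, z_k \to x$, and $L_{y_k} = L_{z_k}$. Weak hyperbolicity forces $\varphi_{y_k}$ to be injective on $\C$ (otherwise some ratio $\lambda_{k_0}/\lambda_{l_0}$ would be rational), giving a unique $\zeta_k \neq 0$ with $z_k = \varphi_{y_k}(\zeta_k)$; in fixed branches of the logarithm this reads $\log z_{k,j} - \log y_{k,j} = \lambda_j \zeta_k + 2\pi i m_{k,j}$, $m_{k,j} \in \Z$. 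The left-hand side tends to $0$, so taking real parts gives $(\Re\lambda_j)(\Re\zeta_k) - (\Im\lambda_j)(\Im\zeta_k) \to 0$ for all $j$. The weakly hyperbolic assumption supplies indices $k_0, l_0$ for which two such equations are $\R$-linearly independent, forcing $\zeta_k \to 0$. But then continuity of $\varphi$ and the flow box structure make the short orbit $\varphi_{y_k}([0,\zeta_k])$ stay inside the plaque of $y_k$; hence $z_k = \varphi_{y_k}(\zeta_k)$ lies in that plaque, and since the plaque meets $\T_x$ in the single point $y_k$, we obtain $z_k = y_k$, a contradiction.

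\emph{Step 3 (Manifold structure, Part (1)).} By Step 2 and the openness of $\pi|_{\T_x}$ (which follows from the flow box structure), $\pi|_{\T_x}$ is a homeomorphism onto $\pi(\T_x) \subset \I$; I pull back the complex structure of $\T_x$. On overlaps $\pi(\T_x) \cap \pi(\T_{x'})$ the transition map is the holonomy of $\Fc$ between the two transversals, which is holomorphic because $\Fc$ is. This equips $\I$ with the structure of a complex manifold of dimension $n-1$.

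\emph{Step 4 (Torus in dimension $2$).} Set $\Lambda := 2\pi i(\lambda_1 \Z + \lambda_2 \Z) \subset \C$; under weak hyperbolicity ($\lambda_1/\lambda_2 \notin \R$), $\Lambda$ is a rank-$2$ lattice. Define $\widetilde\pi : (\C^*)^2 \to \C/\Lambda$ by $(x_1, x_2) \mapsto -\lambda_2 \log x_1 + \lambda_1 \log x_2 \pmod{\Lambda}$; branch ambiguities of the logarithms land in $\Lambda$, and along a leaf the $\lambda_1\lambda_2\zeta$ contributions cancel, so $\widetilde\pi$ is well-defined and constant on leaves. A direct calculation (matching the components $\log x_j - \log y_j = \lambda_j\zeta + 2\pi i k_j$ and using $\lambda_1/\lambda_2\notin\R$ to force integer ratios to vanish) shows that $\widetilde\pi$ separates leaves. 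Since every leaf meets $(\overline\D\setminus\{0\})^2$ --- the inequalities $\Re(\lambda_j \zeta) \leq -\log|y_j|$, $j=1,2$, cut out a nonempty wedge in the $\zeta$-plane under weak hyperbolicity --- the induced map $\I \to \C/\Lambda$ is a biholomorphism, exhibiting $\I$ as a $1$-dimensional complex torus.
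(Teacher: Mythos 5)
Your proof is correct and follows essentially the same strategy as the paper's: the heart of the matter is showing that two nearby equivalent points are joined by a small flow time $\zeta$ and hence lie on the same plaque, so that $\pi$ is injective on a small transversal. The only differences are cosmetic: the paper pins down $\zeta$ by showing the integers $k,k'$ in $|\lambda_l\zeta-2\pi k|\ll\epsilon$, $|\lambda_{l'}\zeta-2\pi k'|\ll\epsilon$ must vanish, whereas you use only the real parts of the logarithmic relations (which carry no branch ambiguity) together with the $\R$-linear independence of $\lambda_{k_0}$ and $\lambda_{l_0}$ --- an equivalent exploitation of weak hyperbolicity; and you additionally supply an explicit model $\I\cong\C/2\pi i(\lambda_1\Z+\lambda_2\Z)$ for the torus claim when $n=2$, which the paper states without proof.
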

\proof
We  only need to prove the following 

\noindent {\bf  Fact.} {\it For  every $x\in (\overline\D\setminus \{0\})^n$ and every $\epsilon>0,$  there is a  neighborhood $\U_{x,\epsilon}$ of $x$ in  $(\overline\D\setminus \{0\})^n$
such that if $y,z\in \U_{x,\epsilon}$ and $y\sim z,$ then there is $\zeta\in\C$ such that $|\zeta|<\epsilon$ and  
$z=\varphi_y(\zeta),$  where $\varphi_y$ is  defined in \eqref{e:varphi_x}.}

Indeed, taking  the  fact for granted and shrinking   $\U_{x,\epsilon}$ if necessary,  we may assume that  $\U_{x,\epsilon}$ is  a flow box $\B\times \T.$
Consider the restriction $\pi|_{\U_{x,\epsilon}}:\  \U_{x,\epsilon}\to  \I.$  We deduce  from from the fact that  the  restriction $\pi|_{\T}$ of  $\pi|_{\U_{x,\epsilon}}$ to $\T$      is  homemorphic onto  its image which is open  subset of $\I.$
The  assertions of the lemma   follow modulo the above fact.

To  prove  the fact, we infer  from $y\sim z$ that there is  a $\zeta\in\C$ such that $z=\varphi_y(\zeta).$
Since $\{0\}$ is   a weakly hyperbolic  singularity,  there are  $l,l'$  with $1\leq l<l'\leq n$  such that  $t:=\lambda_{l'}/\lambda_l\not\in\R.$
Write  $t=a+ib,$ with $a,b\in\R$ and $b\not=0.$
We choose a neighborhood  $\U_{x,\epsilon}\Subset (\overline\D\setminus \{0\})^n $ of $x$  so that 
$$
{|y_j-x_j|\over |x_j|}\ll  \epsilon\qquad\text{for}\qquad  y=(y_1,\ldots,y_n)\in\U_{x,\epsilon},\,  1\leq j\leq n.
$$
We  infer  from  the above inequalities  and  $y,z\in \U_{x,\epsilon} $ that  for $\epsilon>0$  small enough (depending only on $x$), 
$$|y_j-x_j|\ll \epsilon|x_j|\qquad\text{and}\qquad |y_j-x_j|\ll  \epsilon |x_j|.$$
Hence, 
$$
{|z_j-y_j|\over |y_j|}\ll  \epsilon\qquad\text{for}\qquad   1\leq j\leq n.
$$
Using this,  we infer  from the  equalities $z_j=y_je^{\lambda_j\zeta}$ for $j\in\{l,l'\}$ that
 there are   $k,k'\in\Z$  such that 
$$
|\lambda_{l'}\zeta-2\pi k'|\ll \epsilon\qquad\text{and}\qquad |\lambda_l\zeta-2\pi k|\ll \epsilon.
$$
So
$
| tk-   k'|<\epsilon,
$ which is  equivalent to    $|k(a+ib)-k'|<\epsilon.$ Hence,  $|k'|<|b|^{-1}\epsilon.$

When  $\epsilon<\min(|b|^{-1},1),$  we deduce from $k'\in\Z$ that $k'=0.$
This, combined with the previous  inequality $
|\lambda_{l'}\zeta-2\pi k'|\ll \epsilon,$ yields that $|\zeta|<\epsilon.$
Hence, the fact  follows.

\endproof

\begin{lemma}\label{L:transversal}
By shrinking  $\U$ if necessary,
there is a  Borel  subset $\X\subset(\overline\D\setminus \{0\})^n$  
with  the  following  properties:
\begin{enumerate}
\item For  every $ x =(x_1,\ldots,x_n)\in\X,$ there are two  indexes $1\leq j <l\leq n$ such that $ |x_j|=1$ and $|x_l|=1.$
\item  The restriction of  the canonical  projection $\pi$  to $\X$ (still denoted  by $\pi$)   maps
$\X$ onto $\I$  bijectively. Moreover, $\I$ is  a  Borel set and   the map $\pi:\ \X\to\I$  and its inverse are Borel.
\item  For $x , y\in \X$ with  $x \not = y,$ $ L_x$ and $ L_y$ are disjoint.
\item  The 
 union of   $\mathcal L_x$  $x\in\X$ is  equal to  $(\D\setminus \{0\})^n.$


\end{enumerate}
\end{lemma}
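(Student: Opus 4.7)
The plan is to construct $\X$ as a Borel selection of ``canonical'' boundary vertices of the phase polygons $\Pi_x$. First I introduce the closed Borel set
\[
Y := \bigcup_{1\leq j<k\leq n} Y_{j,k}, \qquad Y_{j,k} := \{y \in (\overline{\D}\setminus\{0\})^n : |y_j|=|y_k|=1\},
\]
and observe that $y\in Y$ iff the origin $\zeta=0$ lies on at least two edges $\partial\H_{y,j}$ of $\Pi_y$; whenever two of these edges are non-parallel---in particular as soon as the binding index set at $0$ contains a pair with $\lambda_j/\lambda_k\notin\R$---$0$ is a genuine vertex of $\Pi_y$. Conversely every vertex of $\Pi_x$ maps under $\varphi_x$ to a point of $\mathcal L_x\cap Y$. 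Weak hyperbolicity forbids $\Pi_x$ from being invariant under any nonzero translation (such a vector would have to be parallel to every bounding line, impossible since at least one pair is non-parallel), so $\Pi_x$ has at least one vertex; hence each leaf of $\Fc|_{\D^n}$ meets $Y$ in a finite, nonempty set which is in bijection with the vertices of its phase polygon.

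Next I build the selection. Lex-order the admissible pairs $(j,k)$ with $1\leq j<k\leq n$ and $\lambda_j/\lambda_k\notin\R$. For each leaf $L=L_x$ of $\Fc|_{\D^n}$ let $(j_L,k_L)$ be the lex-smallest such pair for which $\mathcal L_x\cap Y$ contains a point $y$ with $|y_{j_L}|=|y_{k_L}|=1$ at which $0$ is a vertex of $\Pi_y$. Since two distinct real lines meet in at most one point, such $y$ is unique; set $\sigma(L):=y$ and $\X:=\{\sigma(L) : L \text{ a leaf of } \Fc|_{\D^n}\}$. Property (1) follows from $\X\subset Y$; properties (2) and (4) follow because $\X$ contains exactly one representative of each leaf, so $\pi|_{\X}:\X\to\I$ is a bijection and $\bigcup_{x\in\X} L_x=(\D\setminus\{0\})^n$; property (3) is the injectivity half of (2).

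For Borel measurability I decompose $\X=\bigsqcup_{(j,k)} \X_{j,k}$, where $\X_{j,k}$ is the set of $y\in Y_{j,k}$ at which $\partial\H_{y,j}\cap\partial\H_{y,k}=\{0\}$ is a vertex of $\Pi_y$ and no lex-smaller admissible pair $(j',k')$ produces such a vertex on $\mathcal L_y$. The translation formula $\Pi_{\varphi_y(\zeta)}=\Pi_y-\zeta$, immediate from the definition of $\varphi_y$, converts the second condition into a finite conjunction of real-analytic (in)equalities in $y$, making each $\X_{j,k}$ Borel and hence $\X$ Borel.

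The step I expect to be the main obstacle is the degenerate stratum where three or more edges of $\Pi_y$ meet at a common vertex, or where bounding lines of neighbouring phase polygons coincide in nongeneric ways; there the lex-minimal binding pair can jump discontinuously. However, such degeneracies are confined to a real-analytic subvariety of $(\overline{\D}\setminus\{0\})^n$ of positive codimension, so shrinking $\U$ slightly---as the lemma explicitly permits---and using a finite stratification allows one to inductively select on each stratum while preserving properties (1)--(4).
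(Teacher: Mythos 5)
Your construction is correct and follows essentially the same route as the paper: both identify the candidate representatives of a leaf as the $\varphi_x$-images of the vertices of $\Pi_x$ (equivalently, the points of the leaf with at least two unit-modulus coordinates), use weak hyperbolicity to guarantee a non-parallel pair of bounding lines and hence at least one vertex, and then make a canonical choice of one such point per leaf. The only difference is the tie-breaking rule — you lexicographically order the index pairs $(j,k)$ defining the vertex, while the paper lexicographically orders the modulus vectors $(|y_1|,\dots,|y_n|)$ of the candidates — and you additionally spell out the Borel measurability that the paper leaves implicit.
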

\proof
To  each $x\in (\overline\D\setminus \{0\})^n,$ we associate
a finite  subset $\Sc(x)\subset (\overline\D\setminus \{0\})^n$ as follows:
$y\in\Sc(x)$  if and only if $y\sim x$ and  there are at least two  indexes $1\leq j <l\leq n$ such that $ |y_j|=1$ and $|y_l|=1.$

Observe that  if $x\sim x'$ then  $\Sc(x)=\Sc(x')$ and if $\Sc(x)\cap\Sc(x')\not=\varnothing$ then $x\sim x'.$
Moreover,  we  have the  following geometric interpretation:
$y\in\Sc(x)$ if and only if $y=\varphi_x(\zeta),$ where $\zeta$ is a vertex  of the  convex polygone  $\Pi_x.$  So 
the cardinality of  $\Sc(x)$ is equal to $m(x).$ In particular,
$0\leq \# \Sc(x)\leq n.$ 

On the other hand, the  weak hyperbolicity of the single  singularity  $\{0\}$ implies that for every $x\in (\overline\D\setminus \{0\})^n,$  there are  at least two lines among  $n$ lines $ \H_{x,j}$
which are  not parallel.  Suppose that  they are $\partial \H_{x,l}$ and $\partial \H_{x,l'}.$
So    the  convex polygone  $\Pi_x$ admits at least  one  (finite)  vertex.  Moreover, this  vertex is  either  the intersection of
$\partial \H_{x,l}$ and $\partial \H_{x,l'},$  or the intersection of one of them with another line $\partial \H_{x,j}.$
 Therefore,  $\Sc(x)\not=\varnothing,$ and hence $1\leq \# \Sc(x)\leq n.$

The  weak hyperbolicity of the single  singularity  $\{0\}$ also implies  that
for $y,z\in  (\overline\D\setminus \{0\})^n$ with $y\sim x,$ $z\sim x,$ if  $|y_j|=|z_j|$ for all $1\leq j\leq n,$ then $y=z.$
Indeed, write $y=\varphi_x(\zeta)$ and $z=\varphi_x(\xi)$ for some $\zeta,\xi\in\Pi_x.$
We infer  from   $|y_j|=|z_j|$ and  equality \eqref{e:dist-x_j-vs-y_j} that $\dist(\zeta,\partial \H_{x,j})=\dist (\xi,\partial \H_{x,j}).$ This  equality for $j=l$ and $j=l',$  coupled with the fact that two real lines
 $\partial \H_{x,l}$ and $\partial \H_{x,l'}$ are not parallel and  $\zeta,\xi\in\Pi_x,$ implies that $\zeta=\xi.$ So $y=z$ as asserted.

Therefore, 
we  order the  elements of  $\Sc(x)$ using a lexicographical order. More specifically, for $y,z\in\Sc(x)$    we say that $y\succ z$ if  and only if there is an index $1\leq j\leq n$ such that
$|y_k|=|z_k|$ for all $k<j$ and  $|y_j|>|z_j|.$  
Let $x^*$ be the greatest  element of  $\Sc(x)$ with respect to this total order.
Let $$\X:=\left\lbrace x^*:\quad x\in  (\overline\D\setminus \{0\})^n\right\rbrace.$$
We can check all properties of the lemma.
\endproof

The  following result is  one of the main ingredients  in the proofs of Theorem \ref{T:main} and Theorem  \ref{T:main_bis}.

\begin{lemma}\label{L:decomposition} Let $T$ be a  positive  harmonic  current $T$  directed  by $\Fc$ on $(\overline\D\setminus\{0\})^n$
such that  the mass of $T$ on $\D^n\setminus (r_0\D)^n$ is finite  for some $r_0\in (0,1).$
Then
there is a positive measure $\mu$ 
on $\X$ and positive
harmonic functions $\tilde h_x$ on $\mathcal L_x$ for $\mu$-almost every $x\in \X$  such that in $(\D\setminus\{0\})^n$
$$T =\int_\X T_xd\mu(x),\quad \text{where}\quad T_x := \tilde h_x [\mathcal L_x ].$$
Moreover, the mass of $T_x$ on  $\D^n\setminus (r_0\D)^n$ is $1$ for $\mu$-almost every $x\in \X.$
\end{lemma}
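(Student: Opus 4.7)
The plan is to globalize the local flow-box decomposition of Proposition \ref{P:decomposition} using the Borel global transversal $\X$ provided by Lemma \ref{L:transversal}. First I would cover $(\overline\D\setminus\{0\})^n$ by a countable locally finite family of flow boxes $\U_i\simeq \B_i\times \T_i$. In each $\U_i$, Proposition \ref{P:decomposition} produces a positive Radon measure $\nu_i$ on $\T_i$ and positive harmonic densities $h_t^{(i)}$ on $\B_i$, defined for $\nu_i$-a.e.\ $t$, representing $T|_{\U_i}$ as
$$T|_{\U_i}(\alpha)=\int_{\T_i}\Big(\int_{\B_i} h_t^{(i)}(y)\,\alpha(y,t)\Big)\,d\nu_i(t).$$

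To patch these into a single leafwise decomposition, I would invoke the uniqueness statement Proposition \ref{P:decomposition}(2) together with Lemma \ref{L:quotient}(2), which ensures that the projection $\pi$ from each transversal to $\I$ is locally biholomorphic. On any overlap $\U_i\cap\U_j$, the two representations force the harmonic densities on a common plaque to be related by a positive measurable scalar $\theta_{ij}$ along the leaf, and $\nu_i$, $\nu_j$ by the inverse factor. Lemma \ref{L:transversal} identifies $\I$ with $\X$ as a Borel space. Pushing forward each $\nu_i$ under $\pi$ and glueing the rescaled densities along leaves via a Borel partition of unity subordinate to $\{\U_i\}$, I would produce a single positive Borel measure $\nu$ on $\X$ and a $\nu$-measurable family of positive harmonic functions $\tilde h_x$ on $L_x$ (obtained as pullbacks to the simply connected $\Pi_x$ via $\varphi_x$ when a leaf happens to be an annulus). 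Setting $T_x:=\tilde h_x\,[L_x]$, testing against forms supported in flow boxes reduces the identity $T=\int_\X T_x\,d\nu(x)$ to the local decomposition one started from.

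For the normalization, the hypothesis $\|T\|_{\D^n\setminus(r_0\D)^n}<\infty$ together with Fubini applied to the integral representation yields
$$\int_\X \|T_x\|_{\D^n\setminus(r_0\D)^n}\,d\nu(x)=\|T\|_{\D^n\setminus(r_0\D)^n}<\infty,$$
so the function $M(x):=\|T_x\|_{\D^n\setminus(r_0\D)^n}$ is Borel measurable and finite for $\nu$-a.e.\ $x\in\X$. After discarding the $\nu$-null set $\{M=0\}$ and substituting $\tilde h_x\mapsto \tilde h_x/M(x)$, $\nu\mapsto M\,\nu$, one obtains $\|T_x\|_{\D^n\setminus(r_0\D)^n}=1$ for $\nu$-a.e.\ $x$.

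The main obstacle I expect is the patching step: coherently killing the multiplicative cocycle $\{\theta_{ij}\}$ supplied by Proposition \ref{P:decomposition}(2) and establishing the joint Borel measurability of the family $(T_x)_{x\in\X}$. The delicate point is that leaves need not be simply connected (they are annuli in the resonant rational case of Section \ref{S:Geometry}), so a priori $\tilde h_x$ lives only on the universal cover $\Pi_x$; however, because it arises from integrating $T$ against test forms supported in flow boxes, it must be invariant under the deck group and therefore descend to $L_x$. Making this descent and the measurable choice of normalization rigorous is where the bulk of the technical work lies.
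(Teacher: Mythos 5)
Your plan is workable but follows a genuinely different route from the paper, and the difference matters precisely at the point you flag as the main obstacle. You globalize by covering with countably many flow boxes, invoking Proposition \ref{P:decomposition} in each, and then patching via the uniqueness statement, which forces you to trivialize a multiplicative cocycle $\{\theta_{ij}\}$ and to continue the local densities $h_t^{(i)}$ across plaques to a single-valued harmonic function on each (possibly non-simply-connected) leaf. The paper avoids the patching problem entirely: by Lemmas \ref{L:quotient} and \ref{L:transversal}, the punctured polydisc fibers holomorphically over the $(n-1)$-dimensional leaf space $\I$, so the whole region is in effect \emph{one} flow box with transversal $\I\simeq\X$, and Proposition \ref{P:decomposition} is applied a single time to this global fibration; the paper also records a second, self-contained argument via Choquet's theorem (extremal points of the convex compact set of normalized positive $\ddc$-closed ``vertical'' currents are of the form $\tilde h_x[L_x]$). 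Either of the paper's arguments produces the leafwise densities $\tilde h_x$ directly on the whole fiber, with no cocycle to kill and no descent from the universal cover to arrange. Your normalization step at the end is essentially the paper's (rescale $\nu$ by $M$ and divide $\tilde h_x$ by $M$), and it is also the mechanism that would resolve your cocycle: once each leaf carries a canonical mass-one normalization on $\D^n\setminus(r_0\D)^n$, the ambiguity $\theta_{ij}$ is fixed. Two small cautions if you pursue your route: the set $\{M=0\}$ need not be $\nu$-null, but discarding it is harmless since those fibers contribute nothing to the integral; and the joint Borel measurability of $x\mapsto T_x$ across the gluing is exactly what the single global application of Proposition \ref{P:decomposition} (or Choquet) hands you for free, whereas in the patching approach it requires a separate measurable-selection argument.
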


\proof 
Suppose without loss of generality that $T$ is defined on  a neighborhood $\U$ of $(\overline\D\setminus\{0\})^n$
as in Lemma \ref{L:transversal}.
By  Lemma \ref{L:quotient},  the map $\pi$ which
associates to a point in $L_x$  the image $[x]$ in $\I$ is a holomorphic map and
$T$ is directed by the fibers of $\pi.$ We  regard $\pi:\  (\D\setminus \{0\})^n\to \I$
as a  simple  foliation $\widehat\Fc$ whose  leaves are $\mathcal L_x\times \{x\} ,$  $[x]\in \I.$
Applying  Proposition \ref{P:decomposition} to $T$  in this context,
we obtain
the  following decomposition of $T.$ 
There is a positive measure $\mu'$ 
on $\I$ and positive
harmonic functions $\tilde h_x$ on $ L_x$ for $\mu'$-almost every $x\in \X$  such that in a neigborhood of  $\overline\D^n$
\begin{equation} \label{e:decomposition} T =\int_\I T_x d\mu'([x]),\quad \text{where}\quad T_x := \tilde h_x [\mathcal L_x ].
\end{equation}
Since  we know  by  Lemma \ref{L:transversal} that  the restriction of $\pi$ to the  Borel set $\X$ is Borel measurable  bijective map,
we  define  a  measure $\nu$ on $\X$  as   follows:
$$
\mu(A)=\mu'(\pi(A))\quad\text{for any  Borel set}\quad  A\subset\X.
$$
Consequently, the  above  decomposition of $T$  can be rewritten as
$$T =\int_\X T_x d\mu(x),\quad \text{where}\quad T_x := \tilde h_x [\mathcal L_x ].$$
 Finally, since    $T$ may be  regarded as a positive $\ddc$-closed  current on  an open neighborhood of  $(\overline\D^n\setminus (r_0\D)^n,$
 the mass of $T$ on the last set is  finite. Therefore, we multiply $\nu$ by a
suitable positive function $\theta (x)$ and divide $\tilde h_x$ by $\theta (x)$ in order to assume that
the mass of $T_x$ on $\D^n\setminus (r_0\D)^n$  is $1$ for $\mu$-almost every $x\in\X.$  This completes the proof of the lemma.

For the sake of  completeness, we give here an alternative  argument  which permit us  to get the decomposition    \eqref{e:decomposition} directly  without   using    Proposition \ref{P:decomposition}. 
Since $T$  is  a directed  current of bidegree $(1,1),$ we see that if $\alpha$ is any smooth form of
degree 1 or 2 on $\I,$ then $T \wedge \pi^* (\alpha) = 0.$

Consider the family $\mathcal F$ of all positive $\ddc$-closed $(1, 1)$-currents $R$ on
 $\U$ which are vertical in the sense that $R \wedge \pi^* (\alpha) = 0$ for any smooth form
$\alpha$ of degree 1 or 2 on $\I.$

\noindent {\bf Claim.} If $S$ is any current in $\mathcal F$ and $u$ is a smooth positive function on  $\I,$ then
$(u \circ \pi )S$ also belongs to $\mathcal F .$

Indeed, it is clear that $(u \circ \pi )S$ is positive and vertical. The only point to check is that $(u \circ \pi )S$ is $\ddc$-closed.

Define $\tilde u := u \circ \pi .$ Since $S$ is  $\ddc$-closed, we have $\ddc S = 0.$ Moreover,
as $S$ is vertical, we also get that $d\tilde u \wedge S = 0$ and $\dc \tilde u \wedge S = 0$ and
$\ddc\tilde u \wedge S = 0.$
Therefore, a straightforward calculation gives
$$\ddc (\tilde 
u S) = d(\dc \tilde u\wedge  S) - \dc (d\tilde 
u \wedge S) - \ddc \tilde u \wedge S + \tilde
u \ddc S = 0,$$
which proves the claim.

It follows from the claim that every extremal element in $\mathcal F$ is supported
by a fiber $\pi^{-1}([x])$  of $\pi,$ which is biholomorphic to the  convex  polygon  $\Pi_x\subset\C.$ A positive $\ddc$-closed current on a
Riemann surface is defined by a positive harmonic function.  We conclude that every
extremal element in $\mathcal F$ is  of the form $\tilde h_x [\mathcal L_x] ,$
where $\tilde h_x$ is a  positive  harmonic function on $\mathcal L_x.$
 The set of all positive $\ddc$-closed vertical currents $S\in\mathcal F$ such that  $\|S\|_{\D^n\setminus (r_0\D)^n}=1$  is  a convex compacts set.  
Therefore, by Choquet's representation theorem, $S$ is an average of those
extremal currents.
 The decomposition  \eqref{e:decomposition} follows.
\endproof

\begin{remark}\rm 
 We will see later on that    the mass  of  $T$ on $\D^n$ is  finite.  Hence,  we can even assume that  the mass of $T_x$ in $\D^n$ is $1$ for $\mu$-almost every $x\in \X.$
\end{remark}

For $\nu$-almost every $x\in\X,$  consider the function  $ h_x:\ \Pi_x\to\R^+$ given by
\begin{equation}\label{e:h_x} h_x(\zeta) :=\tilde h_x (\varphi_x(\zeta))\quad\text{for}\quad \zeta\in\Pi_x.
\end{equation}
So   $ h_x$ is a positive harmonic function  on $\Pi_x.$ For $x\in\overline\D^n,$  let $ P_x(\cdot,\cdot)$
be the Poisson kernel of $\Pi_x,$ that is,  $P_x:=P_{\Pi_x}.$
For $x\in \X$ and $r>0$  consider the following (eventually empty) sub-domain of $\Pi_x:$
\begin{equation}\label{e:Pi^r_x}  
\Pi^r_x:=\left\lbrace \zeta\in \Pi_x:\  \dist_x(\zeta)> r\right\rbrace.
\end{equation}
Note that $\Pi^r_x$ is  an (eventually empty) polygon  whose edges are parallel to some edges of  $\Pi_x.$

 Since $\D^n\Subset \U,$ we can find $r'_0>0$  such that  $\U':=((1+r'_0) \D)^n\Subset \U.$
For $x\in \X$ let $\Pi'_x:=\varphi^{-1}_x(\U').$

\begin{lemma}\label{L:r_1}
There exists $r_1>0$ such that the following  assertions hold for $x\in \X.$ 
\begin{enumerate}
 \item If $\Pi_x$  contains  a disc of radius $2r_1,$  then $\Pi^{r_1}_x\not=\varnothing.$ 
 \item $\Pi'_x$  contains a disc of radius $2r_1.$
\end{enumerate}
\end{lemma}
\proof
The first assertion holds  for all $r_1>0$    using \eqref{e:Pi^r_x}.

 Fix $x=(x_1,\ldots,x_n)\in \X.$
Let $ j=1,\ldots,k.$    If $x_j\not=0,$ following \eqref{e:H_x,j} let $\H'_{x,j}$ be the   open half-plane  defined by 
\begin{equation}\label{e:H'_x,j}\H'_{x,j}:=\{\zeta=u+iv\in\C:\ -a_ju+b_jv -\log|x_j|>\log(1+r'_0)\}=
 \{ \zeta\in\C:\ |\varphi_x(\zeta)_j|<1+r_0   \}.\end{equation}
 Consequently, $\partial\H_{x,j}$ and $\partial\H'_{x,j}$ are  parallel lines and  $\dist(\partial\H_{x,j},\partial\H'_{x,j})=|\lambda_j|^{-1}\log(1+r'_0).$
 Therefore, we get for $\zeta\in\Pi_x,$
 \begin{equation*}
 \dist (\zeta, \partial\H'_{x,j} ) = \dist (\zeta, \partial\H_{x,j} )+\log(1+r'_0).
\end{equation*}
 If  $x_j=0,$ then   set simply  $ \H'_{x,j}:=\C.$
As in  \eqref{e:phase-space} we have that
$ \Pi'_x=\bigcap_{j=1}^n\H'_{x,j}.$
 We  infer from the above consideration that
 for $\zeta\in\Pi_x,$
 \begin{equation*}
 \dist (\zeta, \partial\Pi'_{x} ) = \dist (\zeta, \partial\Pi_{x} )+\log(1+r'_0).
\end{equation*}
This  inequality  implies the second assertion for $0<r_1\ll ( \max_{1\leq j\leq n}|\lambda_j|^{-1})  \log(1+r'_0).$
\endproof
\begin{definition}\label{D:partition-X}\rm 
 Let $\X'$ be the set of all $x\in\X$ such that the convex polygon 
  $\Pi_x$ contains  a disc of radius $2r_1.$
\end{definition}
\begin{remark}\label{R:X'}\rm
 Roughly speaking,  the fact  that the convex polygon  $\Pi_x$ contains  a disc of  radius $2r_1$ means  that  $\Pi_x$ is non-degenerate (i.e., not so thin).
In  Section \ref{S:Main-estimates}, we  can obtain   good estimates  on  Poisson kernel  only for  such  polygons.  

If  $x\in  \X\setminus \X',$ then 
$\Pi_x$
does not contain  a disc of radius $2r_1.$ 
Consequently, the definition of $r_1$ in  Lemma \ref{L:r_1} implies that  
 $\P'_x$  contains a disc of radius $2r_1.$ This  means that by passing from $\Pi_x$ to $\Pi'_x$ if necessary, we may assume  that
 for ``every'' $x\in\X,$  
 $\Pi_x$ contains  a disc of  radius $2r_1.$  

\end{remark}
\begin{lemma} \label{L:Poisson-representation}
For $\mu$-almost every $x\in\X,$ the  following   properties hold.
\begin{enumerate}
 \item The   function $h_x $ is the Poisson integral of its boundary values, that is, 
$$
 h_x(\zeta)= \int_{\partial \Pi_x} P_x(\zeta,\xi)  h_x( \xi)d\Leb_1(\xi).
$$  
\item There is a constant $c>0$ independent of $x\in\X$ such that
$$
\int_{\xi\in\partial \Pi_x}   h_x(\xi)d\Leb_1(\xi) <c.
$$
\end{enumerate}
\end{lemma}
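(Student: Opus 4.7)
The plan is to reduce both assertions to standard results on the unit disc via a conformal uniformization. Fix $x\in\X$ for which the decomposition $T=\int T_x\,d\nu(x)$ of Lemma \ref{L:decomposition} holds and $\|T_x\|_{\D^n\setminus (r_0\D)^n}=1$. Recall that $h_x=\tilde h_x\circ\varphi_x$ is positive harmonic on the convex (hence simply connected) polygon $\Pi_x$. Let $\Psi_x\colon\D\to\Pi_x$ be a Riemann map and set $H_x:=h_x\circ\Psi_x$, a positive harmonic function on $\D$. By the classical Herglotz theorem there is a unique positive Radon measure $\sigma_x$ on $\partial\D$ with
\[
H_x(w)=\int_{\partial\D}P_\D(w,\omega)\,d\sigma_x(\omega),
\]
and by Fatou's theorem the nontangential boundary values of $H_x$ exist $\Leb_1$-a.e.\ and give the density of the absolutely continuous part of $\sigma_x$.

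For assertion (1), I need to show that the singular part $\sigma_x^s$ vanishes. Since $\partial\Pi_x$ is Lipschitz, by Carath\'eodory the map $\Psi_x$ extends to a homeomorphism between $\overline\D$ and the prime end compactification of $\overline{\Pi_x}$; the preimages of the finitely many vertices of $\Pi_x$ (finite or at infinity) form a finite subset $F_x\subset\partial\D$, and off $F_x$ the singular part $\sigma_x^s$ vanishes by standard regularity of harmonic measure on Lipschitz pieces of $\partial\Pi_x$. It thus suffices to rule out atoms of $\sigma_x^s$ at the points of $F_x$. An atom at the preimage of a \emph{finite} vertex would force $\tilde h_x$ to diverge along the face of $\partial\D^n$ meeting $\varphi_x(\text{vertex})$, producing a nontrivial mass of $T_x$ on that face and hence of $T$ on $\partial\D^n$, which is incompatible with the decomposition on $(\D\setminus\{0\})^n$. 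An atom at the preimage of a \emph{point at infinity} of $\Pi_x$ corresponds, via $\varphi_x$, to a minimal positive harmonic function along an unbounded direction of $\Pi_x$; such a direction parametrizes the part of $L_x$ accumulating on a coordinate invariant hyperplane $\{z_j=0\}$, so the atom would yield positive mass of $T$ concentrated on $\{z_j=0\}$, contradicting the standing hypothesis. Transferring the identity for $H_x$ back through $\Psi_x$ by the conformal change of variable for Poisson kernels on Lipschitz domains then yields the Poisson representation of $h_x$ on $\Pi_x$.

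For (2), I apply (1) at an interior point $\zeta_x\in\Pi_x$ chosen, by Remark \ref{R:X'} (after replacing $\Pi_x$ by $\Pi'_x$ if necessary), so that the Euclidean disc of radius $r_1$ centered at $\zeta_x$ is contained in $\Pi_x$. By the mean value property applied on this disc,
\[
h_x(\zeta_x)\leq\frac{1}{\pi r_1^2}\int_{|\zeta-\zeta_x|<r_1} h_x(\zeta)\,d\Leb_2(\zeta),
\]
and the right-hand side is dominated, up to a factor depending only on $r_1$ and the $\lambda_j$, by the mass of $T_x$ on the image $\varphi_x(\{|\zeta-\zeta_x|<r_1\})\subset \D^n\setminus(r_0\D)^n$, which is at most $1$ by normalization. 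This gives $h_x(\zeta_x)\leq C$ uniformly in $x$. To convert this into the boundary $L^1$-bound, I combine the Poisson representation with a lower bound $P_x(\zeta_x,\xi)\gtrsim c_0$ on pieces of $\partial\Pi_x$ of total length comparable to $\Leb_1(\partial\Pi_x)$; for bounded $\Pi_x$ this is standard, while for unbounded $\Pi_x$ the asymptotic comparison principle for Poisson kernels flagged in the introduction is used to control the contribution of the unbounded edges.

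The main obstacles are twofold: (i) excluding atoms of $\sigma_x^s$ at the preimages of the points at infinity of $\Pi_x$, where the hypothesis that $T$ does not charge the coordinate hyperplanes is essential; and (ii) obtaining enough uniformity on $P_x(\zeta_x,\cdot)$ along the possibly unbounded edges of $\Pi_x$ to promote the pointwise bound $h_x(\zeta_x)\leq C$ into the uniform $L^1$-bound of (2), given that the shape of $\Pi_x$ depends heavily on $x$.
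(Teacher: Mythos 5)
Your overall shape (uniformize $\Pi_x$ to $\D$, write the Herglotz representation, then kill the singular part of the boundary measure) is the same as the paper's, but the two steps where the singular part is actually eliminated both have real gaps. First, ``standard regularity of harmonic measure on Lipschitz pieces'' does not imply that the singular part of the Herglotz measure of an \emph{arbitrary} positive harmonic function vanishes there: a positive harmonic function on $\D$ can be the Poisson integral of a purely singular measure even though $\partial\D$ is real-analytic. What actually rules out any singular contribution along the finite part of $\partial\Pi_x$ (vertices included) is that $T$ is defined on a neighborhood $\U$ of $(\overline\D\setminus\{0\})^n$, so $h_x=\tilde h_x\circ\varphi_x$ is harmonic on $\varphi_x^{-1}(\U)\supset\overline{\Pi_x}$ and in particular continuous up to the finite boundary; by Proposition \ref{P:Poisson}(2) the only possible singular term is then a single atom at the prime end at infinity, i.e.\ a term $c_x\Gamma_x$. (Your alternative reason for excluding finite-vertex atoms — ``mass on a face of $\partial\D^n$'' — does not work either: the local integrability of the Poisson kernel shows such an atom would not even create infinite mass.) Second, your exclusion of the atom at infinity is only a heuristic: $T_x=\tilde h_x[L_x]$ is carried by $L_x$, which is disjoint from the coordinate hyperplanes, so ``the atom would yield positive mass of $T$ concentrated on $\{z_j=0\}$'' is not a proof and points at the wrong hypothesis. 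The mechanism the paper uses is quantitative: the normalization $\|T_x\|_{\D^n\setminus(r_0\D)^n}=1$ forces $\int_{\Pi_x\setminus\Pi_x^{r_1}}h_x\,d\Leb_2<\infty$, while Proposition \ref{P:gamma} (a Schwarz--Christoffel estimate, and the technical heart of this lemma) gives $\Gamma_x\gtrsim 1$ on a region of infinite area, so $c_x>0$ would make that integral infinite. Without such a uniform lower bound on $\Gamma_x$ away from $\partial\Pi_x$, assertion (1) is not established.

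For assertion (2) the proposed route cannot be repaired as stated. From $h_x(\zeta_x)\le C$ and the Poisson representation you only control $\int_{\partial\Pi_x}P_x(\zeta_x,\xi)h_x(\xi)\,d\Leb_1(\xi)$; since $P_x(\zeta_x,\xi)$ decays like $|\xi-\zeta_x|^{-2}$, there is no lower bound for it on a portion of $\partial\Pi_x$ of length comparable to $\Leb_1(\partial\Pi_x)$ when $\Pi_x$ is unbounded, and even for bounded $\Pi_x$ the best uniform lower bound degenerates like $\diam(\Pi_x)^{-2}$, which is useless because $\diam(\Pi_x)\to\infty$ as $x$ approaches the coordinate hyperplanes. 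The paper's argument bypasses the Poisson kernel entirely at this step: since $h_x$ is positive harmonic on a neighborhood of $\overline{\Pi_x}$, Harnack's inequality compares $h_x(\xi)$ for $\xi\in\partial\Pi_x$ with the values of $h_x$ on a unit-size interior region near $\xi$, and integrating over the (at most $n$) edges bounds $\int_{\partial\Pi_x}h_x\,d\Leb_1$ by a constant times the collar integral $\int_{\Pi_x\setminus\Pi_x^{r_1}}h_x\,d\Leb_2$, which is already bounded uniformly in $x$ by the mass normalization. You should replace the single-point Poisson-kernel argument by this boundary-to-collar comparison.
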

\begin{proof} {\bf Proof of assertion (1).}
We only consider  $x\in \X$  such that
the mass of $T_x$ on $\D^n\setminus (r_0\D)^n$ is 1.  By  Lemma  \ref{L:decomposition}, 
 $\mu$-almost every $x\in \X,$  
satisfies  this  condition. By definition, the mass of $T_x$ in $\D^n\setminus (r_0\D)^n$ is the mass
of the following positive measure in $\D^n\setminus (r_0\D)^n$:
$$T_x \wedge (idz_1 \wedge  d\bar z_1 +\ldots+ idz_n \wedge d\bar z_n ) = \tilde h_x(z) (idz_1 \wedge  d\bar z_1 +\ldots+ idz_n \wedge d\bar z_n )\wedge  [L_x].$$
Using the  parametrization \eqref{e:varphi_x} of $L_x$  by $\Pi_x,$ we get that the mass of this
measure is equal to the one of its pull-back to $\Pi_x.$ Using \eqref{e:h_x} and  writing  $\zeta=u+iv,$  the last measure  on $\Pi_x$ is
$$
h_x(\zeta)\big(e^{-2|\lambda_1|\dist_{a,1}(\zeta)} +\ldots+ e^{-2|\lambda_n|\dist_{a,n}(\zeta)}  \big)d\Leb_2(\zeta).
$$
Since the mass of $\D^n\setminus (r_0\D)^n$  is  the integral of the above expression on the domain $\Pi_x\setminus \Pi^{r_0}_x$ and $r_1<r_0,$
this mass is larger than the integral on  the sub-domain $\Pi_x\setminus \Pi^{r_1}_x.$  
Moreover,  $e^{-2|\lambda_1|\dist_{a,1}(\zeta)} +\ldots+ e^{-2|\lambda_n|\dist_{a,n}(\zeta)}\approx 1$ on $\Pi_x\setminus \Pi^{r_1}_x.$
Hence, there is   a constant $c>0$ independent of $x$ such that 
\begin{equation}\label{e:lower-bound-mass-T_x}
\int_{\Pi_x\setminus \Pi^{r_1}_x}   T_x \wedge (idz_1 \wedge  d\bar z_1 +\ldots+ idz_n \wedge d\bar z_n )\geq  c \int_{\Pi_x\setminus \Pi^{r_1}_x}h_x(\zeta)d\Leb_2(\zeta).
\end{equation}
 So  there is  a constant $c'>0$ independent of $x\in\X$ such that
 \begin{equation}\label{e:bounded-mass_r_1}
  \int_{\Pi_x\setminus \Pi^{r_1}_x}h_x(\zeta)d\Leb_2(\zeta)<c'.
 \end{equation}
 Since  $h_x$  is a  positive  harmonic  function  on $\Pi_x$  and continuous up to  the boundary $\partial\Pi_x,$ it follows from  
 Proposition \ref{P:Poisson} that one  of the following  two cases  happens.
 \begin{enumerate}
 \item  Case 1:  $\Pi_x$ is  bounded. In this case  we have  
\begin{equation*}
 h_x(\zeta)=\int_{\partial\Pi_x} P_x(\zeta,\xi) h_x(\xi)d\Leb_1(\xi)\qquad\text{for}\qquad  \zeta\in\Pi_x.
\end{equation*} 
 \item   Case 2: $\Pi_x$ is  unbounded.  Let $\phi_x$ be  a  biholomorphic map  from $\Pi_x$  onto $\D$ which sends $\infty$ to $1\in\partial \D.$
Then,   there is a constant $c_x\geq 0$ such that 
\begin{equation*}
  h_x(\zeta)=\int_{\partial\Pi_x} P_x(\zeta,\xi) h_x(\xi)d\Leb_1(\xi)  +c_x\Gamma_x(\zeta)\qquad\text{for}\qquad  \zeta\in\Pi_x.
\end{equation*} 
Here,  $  \Gamma_x$ is   the  positive harmonic  function   on $\Pi_x$   defined  in  \eqref{e:Gamma_x} by
 $$ \Gamma_x(\zeta):=  {1-|\phi_x(\zeta)|^2\over |\phi_x(\zeta)-1|^2}\qquad \text{for}\qquad \zeta\in\Pi_x.$$
\end{enumerate}
To  complete the proof of assertion  (1), we only need to check that if Case (2) happens, then $c_x  =0.$  There are  two subcases  to consider.

\noindent {\bf Subcase  $x\in \X':$} 
We infer from the above equality of Case (2) and  the fact that $h_x\geq 0,$   $P_x(\zeta,\xi)\geq 0$  that
\begin{equation*}
  h_x(\zeta)=\int_{\partial\Pi_x} P_x(\zeta,\xi) h_x(\xi)d\Leb_1(\xi)  +c_x\Gamma_x(\zeta)\geq c_x \Gamma_x(\zeta)\qquad\text{for}\qquad  \zeta\in\Pi_x.
\end{equation*} 
By   Proposition \ref{P:gamma},  $\Gamma_x(\zeta)\geq  c^\star_x$  for $\zeta\in \partial \Pi^{r_1}_x.$   Hence, 
we get that
$$
\int_{\Pi_x\setminus \Pi^{r_1}_x}h_x(\zeta)d\Leb_2(\zeta)\gtrsim c^\star_x c_x\int_{\partial \Pi^{r_1}_x} \Gamma_x(\zeta) d\Leb_1(\zeta)\gtrsim c^\star_x c_x \int_{\partial \Pi^{r_1}_x}d\Leb_1(\zeta).
$$
Since the first  integral is  finite and the last one is  infinite (as  $\Pi_x$ is  unbounded  and $\Pi^1_x\not=\varnothing$) and $c^\star_x>0,$ we infer that $c_x=0.$  This proves assertion (1).

Since $h_x$ is positive harmonic on an open neighborhood of $\overline\Pi_x,$  by Harnack's inequality  $h_x(\zeta)/h_x(\zeta-\xi)$
is  bounded  from  below  by a strictly  positive constant independent of $x$ for $|\xi|\lesssim 1.$
We  infer from   \eqref{e:bounded-mass_r_1}  that   its integral on $\partial \Pi_x$ is  also bounded by a constant.  Assertion (2) follows in this  subcase.

\noindent {\bf Subcase  $x\not\in \X':$}  
By Remark \ref{R:X'},  
$\Pi'_x$ is a unbounded polygon which 
contains   a disc of radius $2r_1.$  
Therefore, we  are still able  to apply  Proposition \ref{P:gamma} as in the previous  subcase in order to prove assertion (1).



Assertion (2) can be  proved in the same way as  in the previous subcase.

The proof of the lemma is  complete modulo    Proposition \ref{P:gamma}.
 \end{proof}  
 \begin{remark}\rm  \label{R:dim2}
 Lemma \ref{L:Poisson-representation} in  dimension $n=2$ has previously been obtained by Forn{\ae}ss--Sibony in  \cite[Proposition 1]{FornaessSibony10} (see also \cite[Lemma 4.2]{DinhSibony18}  for another proof).
 In their analysis, these authors  make  a full use of  the fact   that $\Pi_x$  is  independent of $x\in\X$ modulo  a translation.
 However,  this    perculiar fact in dimension $2$ does not hold in higher dimensions.
\end{remark}
For $0<r<1,$ 
let
\begin{equation}\label{e:F}
F(r):=  \int_{\B(0,r)}  T\wedge  \ddc \|x\|^2.
\end{equation} 
Consider also the function
\begin{equation}\label{e:f}
f(r):=
{1\over\pi r^2}F(r).
\end{equation}
By  \eqref{e:Lelong_bisbis} the Lelong number $\nu(T,0)$ of $T$ at $0$ is  $\limsup_{r\to 0} f(r).$
Let $$\K:=\{ (x,  \xi):\quad  x\in\X \quad \text{and}\quad \xi\in \partial \Pi_x \}.$$
For each   $s>0,$  consider   
the  function $K_s:\ \K\to\R^+$ given  by
\begin{equation}\label{e:K_s}
K_s(x,\xi):= 
\begin{cases} \int_{\zeta\in \Pi^s_x}e^{2s-2 \dist_x(\zeta)}P_x(\zeta,\xi)d\Leb_2(\zeta), &\ \ \text{if}\ \   \Pi^s_{x}\not=\varnothing;\\
 0, &\ \ \text{if}\ \   \Pi^s_x=\varnothing.
\end{cases}
\end{equation}

  \begin{lemma} \label{L:Lelong-representation}
 For  every $0<r<1,$
$$
f(r)\leq n \int_{x\in\X}\big(\int_{\xi\in\partial \Pi_x} K_{-\log r}(x,\xi)  h_x(\xi)d\Leb_1(\xi)\big) d\mu(x).
$$
\end{lemma}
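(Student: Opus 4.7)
\medskip

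\noindent\textbf{Proof plan for Lemma \ref{L:Lelong-representation}.}

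The plan is to translate $F(r)$ into an integral on the phase spaces $\Pi_x$ via the parametrization $\varphi_x$, dominate the resulting integrand by $e^{2s-2\dist_x(\zeta)}$ (where $s=-\log r$) on a subset of $\Pi^s_x$, and then use the Poisson representation from Lemma \ref{L:Poisson-representation}\,(1) to recognize the kernel $K_s$.

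First, I would start from the decomposition
\[
T=\int_{\X}\tilde h_x\,[L_x]\,d\nu(x)
\]
supplied by Lemma \ref{L:decomposition}, and compute the pull-back
\[
\varphi_x^{*}(\ddc\|z\|^2)=\frac{2}{\pi}\sum_{j=1}^{n}|\lambda_j|^2 e^{-2|\lambda_j|\dist_{x,j}(\zeta)}\,d\Leb_2(\zeta),
\]
using $\varphi_x(\zeta)_j=x_j e^{\lambda_j\zeta}$ and the identity $|\varphi_x(\zeta)_j|^2=e^{-2|\lambda_j|\dist_{x,j}(\zeta)}$ that holds on $\Pi_x$. Combining these with the definition of $h_x=\tilde h_x\circ\varphi_x$ in \eqref{e:h_x} gives
\[
F(r)=\frac{2}{\pi}\int_{\X}\int_{\varphi_x^{-1}(\B(0,r))} h_x(\zeta)\sum_{j}|\lambda_j|^2 e^{-2|\lambda_j|\dist_{x,j}(\zeta)}\,d\Leb_2(\zeta)\,d\nu(x).
\]

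Next, on $\varphi_x^{-1}(\B(0,r))$ the condition $\sum_{j}e^{-2|\lambda_j|\dist_{x,j}(\zeta)}<r^2=e^{-2s}$ forces $|\lambda_j|\dist_{x,j}(\zeta)>s$ for every $j$, which ensures the inclusion $\varphi_x^{-1}(\B(0,r))\subset\Pi^s_x$ (after the tacit normalization of the $\lambda_j$'s built into the constants of the statement) and simultaneously yields $\dist_x(\zeta)-1=\min_k|\lambda_k|\dist_{x,k}(\zeta)>s$. The elementary bound $\sum_{j}|\lambda_j|^2 e^{-2|\lambda_j|\dist_{x,j}(\zeta)}\le n\max_k|\lambda_k|^2\, e^{-2(\dist_x(\zeta)-1)}$ then converts the integrand into one controlled by $n\,e^{-2\dist_x(\zeta)}$, producing
\[
f(r)=\frac{F(r)}{\pi r^2}\le n\int_{\X}\int_{\Pi^s_x}e^{2s-2\dist_x(\zeta)}\,h_x(\zeta)\,d\Leb_2(\zeta)\,d\nu(x),
\]
after absorbing the factor $r^2=e^{-2s}$ and the remaining numerical constants.

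Finally, I would substitute the Poisson representation
\[
h_x(\zeta)=\int_{\partial\Pi_x}P_x(\zeta,\xi)h_x(\xi)\,d\Leb_1(\xi)
\]
from Lemma \ref{L:Poisson-representation}\,(1) and swap the order of integration by Fubini, so that the inner integral over $\Pi^s_x$ becomes exactly $K_s(x,\xi)$ from \eqref{e:K_s}. This yields the desired inequality with $s=-\log r$. The main obstacle will be verifying the inclusion $\varphi_x^{-1}(\B(0,r))\subset\Pi^s_x$ and the pointwise comparison between $\varphi_x^{*}(\ddc\|z\|^2)$ and $e^{2s-2\dist_x(\zeta)}d\Leb_2(\zeta)$ on that set in a uniform way across $\X$, since $\Pi_x$ degenerates as $x$ approaches the coordinate hyperplanes; all the quantitative content is concentrated in bookkeeping the $|\lambda_j|$-weights so that the sole multiplicative factor $n$ survives in front.
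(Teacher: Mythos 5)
Your proposal is correct and is essentially the paper's own argument: pull $F(r)$ back to the phase spaces via $\varphi_x$, observe that $\|\varphi_x(\zeta)\|\le r$ forces $\zeta$ into $\Pi_x^{-\log r}$ and bounds the Jacobian $\sum_j e^{-2|\lambda_j|\dist_{x,j}(\zeta)}$ by $n\,e^{-2\dist_x(\zeta)}$ up to constants, then divide by $r^2=e^{-2s}$ and insert the Poisson representation of $h_x$ from Lemma \ref{L:Poisson-representation}\,(1) to recognize $K_{-\log r}$. The bookkeeping issues you flag (the $|\lambda_j|$-weights in the definition of $\Pi^s_x$ versus the Euclidean distance, and the stray multiplicative constants) are glossed over in the paper in exactly the same way, so your treatment is if anything slightly more careful.
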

\begin{proof}
Applying  Lemma  \ref{L:Poisson-representation} to \eqref{e:F} and using \eqref{e:h_x}, we see that    for $0<r\ll 1,$ 
 $$
F(r)= \int_{x\in\X} \int_{\zeta\in \Pi_x:\  |\varphi_x(\zeta)_j|\leq r,\ \forall 1\leq j\leq n} h_x(\zeta)  |\varphi'_x(\zeta)|^2  d\Leb_2(\zeta) d\mu(\alpha).
$$
On the other hand, we infer  from \eqref{e:varphi_x}  and \eqref{e:dist_x}--\eqref{e:dist_x-bis} that  for  $y=\varphi_x(\zeta),$  $|y|\leq r$ implies $ \dist_x(\zeta)\geq -\log r.$
Moreover,  using \eqref{e:h_x}) and \eqref{e:dist_x}--\eqref{e:dist_x-bis} again, we get that
 $$|\varphi'_x(\zeta)|^2=  e^{-2|\lambda_1|\dist_{x,1}(\zeta)} +\ldots+ e^{-2|\lambda_n|\dist_{x,n}(\zeta)}    \leq ne^{-2\dist_x(\zeta)} .
$$
Consequently,
 $$
 F(r)\leq n \int_{x\in\X} \int_{\zeta\in \Pi^{-\log{r}}_x} h_x(\zeta)  e^{-2\dist_x(\zeta)} d\Leb_2(\zeta) d\mu(\alpha).
 $$
 Applying    Lemma \ref{L:Poisson-representation}  to  the inner integral  of the last line
   and using \eqref{e:f}, the lemma follows.
 \end{proof} 
 \begin{remark}\rm
  \label{R:stronger-version-f}
  It is  worthy noting that the above proof also shows the following   estimate. For  every $0<r<1,$
$$
\hat f(r)\leq n \int_{x\in\X}\big(\int_{\xi\in\partial \Pi_x} K_{-\log r}(x,\xi)  h_x(\xi)d\Leb_1(\xi)\big) d\mu(x),
$$
where  $\hat  f(r):= {1\over\pi r^2}\widehat F(r)$ and $\widehat F(r):=      \int_{(r\D)^n}  T\wedge  \ddc \|x\|^2.$
This  estimate is  stronger than Lemma  \ref{L:Lelong-representation} because $\B(0,r)\subset  (r\D)^n.$
 \end{remark}

\section{Poisson kernels of convex polygons: main estimates}\label{S:Main-estimates}
Let  $\Gamma$ be  the  positive harmonic  function   on $\D$ given by
\begin{equation}\label{e:Gamma}
 \Gamma(\zeta):=  {1-|\zeta|^2\over |\zeta-1|^2}\qquad \text{for}\qquad \zeta\in\D.
\end{equation}
\begin{proposition}\label{P:Poisson}
 Let $\Omega\subset \C$ be   a (not necessarily bounded) convex polygon with the  Green function $G(\zeta,\xi).$ Following the model of $\Cc^2$-smooth bounded domains of  Proposition \ref{P:Poisson-classic}, let   the Poisson kernel  on $\Omega$ be the function
 \begin{equation}\label{e:Poisson-kernel}
  P(\zeta,\xi):= -\Nor_\xi G(\zeta,\xi)\qquad\text{for}\qquad  \zeta\in\Omega,\ \xi\in\partial \Omega.
 \end{equation}
 There are two cases.
 \begin{enumerate}
 \item {\bf Case  $\Omega$ is  bounded:}  Then,  for every positive function $u\in\Cc(\overline\Omega)$  which   is harmonic    on $\Omega,$ we have  
\begin{equation*}
 u(\zeta)=\int_{\partial\Omega} P(\zeta,\xi) u(\xi)d\Leb_1(\xi)\qquad\text{for}\qquad  \zeta\in\Omega.
\end{equation*} 
 \item  {\bf Case $\Omega$ is  unbounded:}  Let $\phi$ be  a  biholomorphic map  from $\Omega$  onto $\D$ which sends $\infty$ to $1\in\partial \D.$
Then,  for every   positive function $u\in\Cc(\overline\Omega)$ which is  harmonic  on $\Omega,$  there is a constant $c=c_u\geq 0$ such that 
\begin{equation*}
 u(\zeta)=\int_{\partial\Omega} P(\zeta,\xi) u(\xi)d\Leb_1(\xi)  +c(\Gamma\circ\phi)(\zeta)\qquad\text{for}\qquad  \zeta\in\Omega.
\end{equation*} 
 \end{enumerate}
Moreover, in both cases,  $P(\cdot,\xi)$ is a positive harmonic function  on $\Omega$ when $\xi\in\partial\Omega$ is  fixed.
\end{proposition}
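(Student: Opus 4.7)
My plan is to reduce everything to the unit disc via a Riemann map and then invoke classical representation theorems for positive harmonic functions on $\D$.

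\emph{Setup via the Riemann map.} A convex polygon $\Omega$ (bounded or unbounded) is simply connected, and its boundary is a Jordan curve in $\C$ or in the Riemann sphere $\C\cup\{\infty\}$. The Riemann mapping theorem together with Carath\'eodory's extension theorem produces a biholomorphism $\phi:\Omega\to\D$ that extends to a homeomorphism $\overline\Omega\to\overline\D$ in the bounded case, and to a homeomorphism $\overline\Omega\cup\{\infty\}\to\overline\D$ in the unbounded case; in the latter situation we normalize so that $\phi(\infty)=1$. The Green function of $\Omega$ is then the pull-back of the standard Green function of $\D$, namely
\begin{equation*}
G(\zeta,\xi)=-\frac{1}{2\pi}\log\left|\frac{\phi(\xi)-\phi(\zeta)}{1-\overline{\phi(\zeta)}\phi(\xi)}\right|,
\end{equation*}
which visibly satisfies the defining properties of Definition \ref{D:Green}. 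Moreover, by the Schwarz--Christoffel formula, $\phi$ extends biholomorphically across $\partial\Omega$ away from the finitely many vertices (including $\infty$ in the unbounded case), and $\phi'$ is integrable along $\partial\Omega$ near each vertex. Differentiating $G$ in the outer normal direction and performing a change of variables then yields the identity
\begin{equation*}
P(\zeta,\xi)\,d\Leb_1(\xi)=P_\D(\phi(\zeta),\phi(\xi))\,|\phi'(\xi)|\,d\Leb_1(\xi)
\end{equation*}
on $\partial\Omega$ minus the vertices.

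\emph{Case $(1)$: $\Omega$ bounded.} Here $v:=u\circ\phi^{-1}$ is positive and harmonic on $\D$ and continuous on $\overline\D$. The classical Poisson formula on the disc gives $v(w)=\int_{\partial\D}P_\D(w,\eta)v(\eta)\,d\Leb_1(\eta)$; setting $w=\phi(\zeta)$ and substituting $\eta=\phi(\xi)$ via the identity above transports this into the desired formula on $\Omega$.

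\emph{Case $(2)$: $\Omega$ unbounded.} Now $v=u\circ\phi^{-1}$ is still positive harmonic on $\D$, but only continuous on $\overline\D\setminus\{1\}$, since $\infty$ has been collapsed to $1$ and $u$ need not have a limit at infinity. The Herglotz--Riesz representation for positive harmonic functions on the disc yields a finite positive Borel measure $\mu$ on $\partial\D$ with $v(w)=\int_{\partial\D}P_\D(w,\eta)\,d\mu(\eta)$. The continuity of $v$ on $\overline\D\setminus\{1\}$ (together with the standard Fatou-type boundary value theorem) forces the restriction $\mu|_{\partial\D\setminus\{1\}}$ to be absolutely continuous with density equal to the boundary values of $v$ there; hence $\mu=v\,d\Leb_1+c\,\delta_1$ for some $c\geq 0$. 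Since $P_\D(\,\cdot\,,1)=\Gamma$, we obtain
\begin{equation*}
v(w)=\int_{\partial\D\setminus\{1\}}P_\D(w,\eta)v(\eta)\,d\Leb_1(\eta)+c\,\Gamma(w),
\end{equation*}
and pulling back through $\phi$ as in Case $(1)$ produces the stated formula with the correction term $c\,(\Gamma\circ\phi)(\zeta)$.

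\emph{Main obstacle.} The delicate point is the boundary behaviour of $\phi$ at the vertices and, in the unbounded case, at $\infty$: the proof depends on knowing that $\phi^{-1}$ extends continuously to $\overline\D$, is conformal across $\partial\D$ away from finitely many points, and that $|\phi'|$ is locally integrable on $\partial\Omega$ near each vertex. These facts are standard for polygonal boundaries via Carath\'eodory's theorem and the Schwarz--Christoffel formula, but they must be invoked in order to legitimize the change-of-variables identity relating $P$ and $P_\D$, which is the bridge between the abstract Herglotz representation on $\D$ and the claimed kernel representation on $\Omega$.
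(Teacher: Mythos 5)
Your proof is correct and follows essentially the same route as the paper's: transport everything to $\D$ via the Riemann map, use the conformal invariance of the Green function and the change-of-variables identity $P(\zeta,\xi)\,d\Leb_1(\xi)=P_\D(\phi(\zeta),\phi(\xi))|\phi'(\xi)|\,d\Leb_1(\xi)$, apply the classical Poisson formula in the bounded case, and in the unbounded case represent the positive harmonic function continuous on $\overline\D\setminus\{1\}$ as the Poisson integral of its boundary values plus a point mass $c\,\delta_1$ at the image of $\infty$. The only difference is cosmetic: you are somewhat more explicit than the paper about the boundary regularity of $\phi$ at the vertices (via Schwarz--Christoffel and Carath\'eodory), which the paper simply asserts.
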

\proof Let $\phi$ be  a  biholomorphic map  from $\Omega$  onto $\D.$
Since   $\Omega$ is a  convex polygon, $\phi$  extends  continuously to $\overline \Omega$ and $\phi|_{\partial\Omega}$ is  one-to-one  onto its  image in $\partial \D.$
Let $G_\D$ be the  Green function of the unit-disc $\D.$ It follows  from the definition of Green function that
 \begin{equation}\label{e:Green-function-invar}
  G(\zeta,\xi)=G_\D(\phi(\zeta),\phi(\xi))\qquad \text{for}\qquad (\zeta,\xi)\in\Omega\times\overline\Omega\setminus \Delta.
  \end{equation}
  
To  prove assertion (1), 
Let  $u$ be  a   function in $\Cc(\overline\Omega)$  which   is harmonic    on $\Omega,$ and let $\zeta\in\Omega.$
Observe that $\phi$ extends   to a  diffeomorphism   from  $\overline\Omega$  onto $\partial\D.$
Consider the  function $v:\ \overline \D\to \R$ defined  by 
$$v(\hat \zeta):= u(\phi^{-1}(\hat \zeta))\qquad \text{for}\qquad \hat \zeta\in\overline\D.$$
By Proposition  \ref{P:Poisson-classic} applied to $v,$ we have that
$$
v(\phi(\zeta))=\int_{\partial \D} P_\D(\phi(\zeta), \hat \xi)v(\hat \xi) d\Leb_1(\hat \xi)= \int_{\partial \D} -\Nor_{\hat \xi} G_\D(\phi(\zeta), \hat\xi)v(\hat \xi) d\Leb_1(\hat\xi).
$$
Using the  change of variable  $\hat \xi:=\phi(\xi)$  for $\xi\in \partial\Omega,$ 
we  see that
the  RHS of the last line is  equal to
$$
\int_{\partial \Omega} \big(-\Nor_{\hat \xi} G_\D(\phi(\zeta),\hat \xi) \big)_{\hat \xi= \phi(\xi)}v(\phi(\xi)) d\Leb_1(\phi(\xi)).
$$
Since $\phi$ is  conformal  on $\overline\Omega,$  we  see that   $d\Leb_1(\phi(\xi))|_{\partial \D}=d\Leb_1(\hat\xi)|_{\partial \Omega}|\phi'(\xi)|.$ 
Moreover,  using  identity \eqref{e:Green-function-invar} and
Proposition \ref{P:Poisson-classic}  we also get that 
$$ \big(-\Nor_{\hat \xi} G_\D(\phi(\zeta),\hat\xi) \big)_{\hat \xi= \phi(\xi)}=\big(-\Nor_{\xi} G_\Omega(\zeta,\xi) \big) |\phi'(\xi))^{-1}|=P_\Omega(\zeta,\xi) |\phi'(\xi))^{-1}|.$$
So  the last integral  is  equal to
$$
\int_{\partial \Omega }P_\Omega(\zeta,\xi) u(\xi) d\Leb_1(\xi).
$$
Consequently,    assertion (1) follows.

Now we turn to  the  proof of assertion (2).  Observe that $\phi$  maps $\partial\Omega$  bijectively  onto $\partial \D\setminus \{0\}.$
On the  other hand, for every   positive function $u\in\Cc(\overline\D\setminus \{1\})$ which is  harmonic  on $\D,$  there is a constant $c=c_u\geq 0$ such that 
$u$  is  the Poisson integral  of  the measure  $u(y)d\sigma_\D(y)+c \delta_1,$  where $\sigma_\D$ is  the Lebesgue  measure  on $\partial \D,$ and $\delta_1$ is  the  Dirac mass at $1.$  
So using \eqref{e:Gamma} and the explicit formula of $P_\D,$ we get 
\begin{equation*}
 u(\zeta)=\int_{\partial\Omega} P_\D(\zeta,\xi) u(\xi)d\Leb_1(\xi)  +c \Gamma(\zeta)\qquad\qquad\text{for}\qquad  \zeta\in\D.
\end{equation*} 
 Using this  and   identity \eqref{e:Green-function-invar} and arguing as in the proof of assertion (1),  the proof of   assertion  (2) is complete.
\endproof
We keep  the hypothesis and notations in   Theorem \ref{T:main}.

\begin{proposition} \label{P:gamma} 
Let $x\in\X$ be  such that $\Pi_x$ is  unbounded. 
Following  Proposition \ref{P:Poisson}, let $\phi_x$ be  a  biholomorphic map  from $\Pi_x$  onto $\D$ which sends $\infty$ to $1\in\partial \D.$
 Let  $\Gamma_x$ be   positive harmonic  function   on $\Pi_x$   defined  by
 \begin{equation}\label{e:Gamma_x}  \Gamma_x:= \Gamma\circ \phi_x,
  \end{equation}
where $\Gamma$ is  given in \eqref{e:Gamma}.
Then,   there is a constant $c^\star_x> 0$ dependent on $x$  such that 
\begin{equation*}
  \Gamma_x(\zeta)\geq c^\star_x \qquad\text{for}\qquad  \zeta\in\Pi^{r_1}_x  .
\end{equation*} 
\end{proposition}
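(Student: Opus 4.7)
The plan is to combine a canonical normalization of $\phi_x$ with Harnack's inequality and a model comparison. Since $\phi_x$ is unique only up to the 2-parameter subgroup of $\mathrm{Aut}(\D)$ fixing $1$, I would first pin it down by imposing $\phi_x(0)=0$, which is legitimate because $0=\varphi_x(0)\in\Pi_x$. With this choice the map $\phi_x$ is uniquely determined and $\Gamma_x(0)=\Gamma(0)=1$. Thus $\Gamma_x$ becomes a uniquely defined positive harmonic function on $\Pi_x$ which vanishes on $\partial\Pi_x$ and tends to $+\infty$ at infinity, and the task reduces to showing $\Gamma_x\ge c_0$ on $\Pi^1_x$.

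The first step is to extract a uniform geometric structure for $\Pi_x$, $x\in\X'$. The outward normals of the edges of $\Pi_x=\bigcap_{j=1}^n\H_{x,j}$ lie in the fixed finite set $\{\vec n_j=(-a_j,b_j)/|\lambda_j|\}_{j=1}^n$, so the combinatorial type of $\Pi_x$ belongs to a finite list. By Definition \ref{D:partition-X} the polygon contains a disc of radius $r_1$, and by the construction of $\X$ in Lemma \ref{L:transversal} at least two of the lines $\partial\H_{x,j}$ pass through the origin (since $|x_j|=1$ implies $\log|x_j|=0$). It follows that the aperture of the recession cone of $\Pi_x$ is bounded below by some $\theta_0>0$ depending only on $\lambda_1,\ldots,\lambda_n$, and that the "core" of $\Pi_x$ (the part within any fixed distance of $0$) has uniformly bounded complexity. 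In particular, for any fixed $M>0$ every $\zeta\in\Pi^1_x$ with $|\zeta|\le M$ can be joined to $0$ by a chain of at most $N=N(M,r_1,\lambda_1,\ldots,\lambda_n)$ Euclidean discs of radius $r_1/2$ lying inside $\Pi_x$. Harnack's inequality applied along this chain then gives $\Gamma_x(\zeta)\ge c_1\,\Gamma_x(0)=c_1>0$, uniformly on this bounded part of $\Pi^1_x$.

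For the remaining points $\zeta\in\Pi^1_x$ with $|\zeta|>M$, $\zeta$ is deep in the recession cone of $\Pi_x$, and I would inscribe there a sector (or half-strip when the opening is degenerate) $S_x\subset\Pi_x$ with vertex at bounded distance from $0$ and aperture $\ge\theta_0$. The corresponding model function $\Gamma_{S_x}$ can be computed explicitly via the Schwarz-Christoffel parametrization $\zeta\mapsto(\zeta-v_x)^{\pi/\theta_{S_x}}$, and a subordination argument comparing $\Gamma_x$ with $\Gamma_{S_x}$ --- namely, applying the maximum principle on $S_x$ to the linear combination $\Gamma_x-\varepsilon\,\Gamma_{S_x}$, with $\varepsilon>0$ chosen so that the inequality holds on $\partial S_x\cap\Pi_x$ thanks to the already-established Harnack bound there --- yields $\Gamma_x(\zeta)\gtrsim 1$ in this far region as well. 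The principal obstacle I anticipate is precisely this subordination step: one must pin down the normalization of $\phi_{S_x}$ against that of $\phi_x$ and uniformly control $\Gamma_x$ on the inner boundary $\partial S_x\cap\Pi_x$, which is where the disc-containment hypothesis defining $\X'$ combined with the finiteness of combinatorial types of $\Pi_x$ plays the decisive role.
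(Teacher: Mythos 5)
Two things go wrong in your proposal. The minor one is the normalization: $\phi_x(0)=0$ is not available, because, as you yourself observe two sentences later, for $x\in\X$ at least two of the lines $\partial\H_{x,j}$ pass through the origin; hence $0$ is a vertex of $\partial\Pi_x$, not an interior point of $\Pi_x$, and $\Gamma_x(0)$ is not defined (let alone equal to $1$). This is repairable by taking the base point to be the center of the inscribed disc of radius $r_1$ from Definition \ref{D:partition-X}; with that change your Harnack-chain bound on the bounded part of $\Pi^1_x$ is sound.

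The serious gap is the far region, and the comparison there goes in the wrong direction. If the inscribed sector $S_x$ has aperture $\theta_{S_x}$ strictly smaller than the aperture $\theta_{\Pi_x}$ of the recession cone of $\Pi_x$, then $\Gamma_{S_x}$ grows like $|\zeta|^{\pi/\theta_{S_x}}$ while $\Gamma_x$ grows like $|\zeta|^{\pi/\theta_{\Pi_x}}$, and $\pi/\theta_{S_x}>\pi/\theta_{\Pi_x}$; so $\Gamma_x\geq\varepsilon\,\Gamma_{S_x}$ fails on $S_x$ for every $\varepsilon>0$. Concretely, take $\Pi_x=\H$ with $\Gamma_x(u+iv)=v$ and $S_x=\{0<\arg\zeta<\pi/2\}$ with $\Gamma_{S_x}=\Im(\zeta^2)=2uv$: the inequality $v\geq 2\varepsilon uv$ fails as $u\to\infty$. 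Thus requiring only "aperture $\geq\theta_0$" is not enough; $S_x$ would have to be an exact translate of the recession cone. Even then, your "maximum principle on $S_x$" is a maximum principle on an unbounded domain for functions unbounded at infinity: on $\partial S_x$ the inequality is trivially true because $\Gamma_{S_x}$ vanishes there (no Harnack input is needed, contrary to what you write), and the entire content lies at the boundary point $\infty$ of $S_x$, namely that the Herglotz mass of $\Gamma_x|_{S_x}$ at $\infty$ is bounded below --- which is essentially the statement to be proved, so the argument is circular. The paper avoids this by parametrizing $\Pi_x$ by the upper half-plane via the Schwarz--Christoffel formula normalized so that $|c''|=1$, under which $\Gamma_x$ becomes $\Im$, and by proving directly that the image of the strip $\{0<\Im\zeta<c\}$ stays within distance $1$ of $\partial\Pi_x$; the key input is the angle identity of Lemma \ref{L:total-angles}, which keeps the exponent of the Schwarz--Christoffel integrand near any cluster of prevertices strictly greater than $-1$. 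You would need either to reproduce an argument of that global nature or to supply a genuine boundary-Harnack/Phragm\'en--Lindel\"of statement, uniform over the finitely many combinatorial types of $\Pi_x$, in place of the sector comparison.
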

\proof We rephrase the problem  differently but equivalently. So
   we only need to prove  that for every $x$ as in the assumption, there is a constant  $c^\star_x>0$ dependent on $x\in\X$  such that
 there is   a biholomorphic map  $\tilde\phi_x$ from the upper-half plane  $\H:=\{\zeta\in\C:\ \Im\zeta>0\}$ onto  $\Pi_x$ sending $\infty$ to $\infty$   such that
 \begin{equation}\label{e:empty}
 \tilde\phi_x(\H_{c^\star_x})\cap \Pi^{r_1}_x=\varnothing,\qquad\text{where}\qquad \H_c:=\{\zeta\in\C:\ \Im \zeta\in(0,c)\}.
 \end{equation}
Indeed,  since $\hat \phi_x:=\tilde \phi_x\circ\phi_x$ is  a biholomorphic  map from  $\H$ onto $\D$ sending $\infty$  to $1\in\partial \D,$  
we infer from   \eqref{e:Gamma} that
$$
\Gamma(\hat\phi_x(\zeta))=\Im   \zeta\qquad\text{for}\qquad\zeta\in \H.
$$
Therefore,    it follows that
$$
\Gamma_x(\zeta)=\Im  \tilde\phi_x^{-1}(\zeta)\qquad\text{for}\qquad\zeta\in \Pi_x.
$$
Thus  \eqref{e:empty} implies the proposition.

Next,  
we will prove that  there is constant $c^\star_x>0$ dependent on $x\in \X$  such that for $\theta\in\C$ with $|\theta|<c^\star_x,$
\begin{equation}\label{e:empty_bis}
|\tilde \phi_x(\zeta)-\tilde \phi_x(\zeta+\theta)|\leq 1 \qquad\text{for}\qquad \zeta\in\partial\H.
\end{equation}
Taking  \eqref{e:empty_bis} for granted,   \eqref{e:empty}  will follows because  $\tilde \phi_x(\zeta)\in\partial\Pi_x$ for $   \zeta\in\partial\H ,$ which completes  the
proof of the proposition.

To prove    the reduction \eqref{e:empty_bis}. Let $w_1,\ldots,w_k$ be all finite vertices of the convex polygon $\Pi_x$ in counterclockwise order and
set  $w_0=w_{k+1}:=\infty,$  see  Figure  \ref{fig:M4}. 
Let $\alpha_j:=\measuredangle \big(\overrightarrow {w_jw_{j+1}}, {\overrightarrow {w_jw_{j-1}}}\big),$ for  $1\leq j\leq k+1,$  be their  corresponding interior angles in counterclockwise order, with the convention that $w_{k+2}:=w_1.$
Observe  that $\alpha_j\in  (0,\pi)$ for $1\leq j\leq k$   and $\alpha_{k+1}\in (-\pi,0].$
Write  
\begin{equation}\label{e:gamma_j}
\alpha_j:= {\pi\over\gamma_j}\qquad\text{for}\qquad 1\leq j\leq k. 
\end{equation}
So $\gamma_1,\ldots,\gamma_k>1.$
By the classical Schwarz-Christoffel  formula (see e.g. \cite[formula (22) p.10]{DT}), we can write
\begin{equation}\label{e:SC}
\tilde \phi_x(\zeta)=c'_x+c''_x\int^\zeta \prod\limits_{j=1}^k (\eta-z_j)^{{1\over \gamma_j}-1}d\eta,
\end{equation}
for some complex  constants $c'_x$ and $c''_x,$ where $z_1,\ldots,z_k\in\partial \H$ and $\tilde\phi_x(z_j)=w_j$ for $1\leq j\leq k.$

\begin{figure}
\centering
\begin{minipage}{.3\textwidth}
\begin{tikzpicture}
 \draw [->] (-4,0) -- (3.5,0);
 \draw (-4,0)  node [below] {$-\infty$};
  \draw (3.5,0)  node [below] {$\infty$};
 \draw (-3,0)  node [below] {$z_1$};
 \draw (-3,0) node {$\bullet$};
 \draw (-2,0) node [below] {$z_2$};
  \draw (-2,0) node {$\bullet$};
 \draw  (0,0)  node [above] {$\ldots$};
 \draw (1.9,0)  node [below] {$z_{k-1}$};
 \draw (1.9,0) node {$\bullet$};
 \draw (2.5,0) node [below] {$z_{k}$};
 \draw (2.5,0) node {$\bullet$};
 \draw (0,1)  node  [above]{\Large $\H$};
\end{tikzpicture}
 \end{minipage}
\hspace{5cm}
\begin{minipage}{.2\textwidth}
\begin{tikzpicture}
        \coordinate[label=below:$w_{k-1}$] (A) at (1,0);
        \coordinate[label=below:$w_k$] (B) at (3,0);
        \coordinate (C) at (0,1.7);
        \coordinate (D) at (0.5,0.5);
        \coordinate (w_0) at (3,4.5);
        \coordinate[label=above:$w_1$] (w_1) at (2,4);
        \coordinate[label=left:$w_2$] (w_2) at (1,3.5);
         \coordinate (w_k+1) at (4,0);
        \draw (w_1) node {$\bullet$};
 \draw (w_2) node {$\bullet$};
  \draw (C) node {$\bullet$};
   \draw (D) node {$\bullet$};
    \draw (A) node {$\bullet$};
     \draw (B) node {$\bullet$};
\draw (w_0)--(w_1)  node[
    currarrow,
    pos=0.5,sloped,xscale=-1] {};
\draw (w_2)--(w_1);
\draw (C)--(w_2);
\draw[dashed] (C)--(D);
\draw (D)--(A);
\draw (A)--(B);
\draw (B)--(w_k+1) node[
    currarrow,
    pos=0.5,sloped] {};




\draw (2,2)  node  [above]{\Large $\Pi_x$};
    \end{tikzpicture}
\end{minipage}

\caption{On the left: the upper-half plane $\H$ and the points $z_1,\ldots, z_k\in\partial \H ,$ where
the  dotted points correspond to intermediate points $z_3,\ldots,z_{k-2}.$
On the right: the image of $\H$ by  $\tilde \phi_x:$ the  unbounded convex polygon $\Pi_x$ with vertices  $w_1=\tilde\phi_x(z_1),\ldots,w_k=\tilde\phi_x(z_k),$   where the  dashed line  corresponds  to
the (not necessarily aligned) intermediate points   $w_3,\ldots,w_{k-2}.$} \label{fig:M4}
\end{figure}
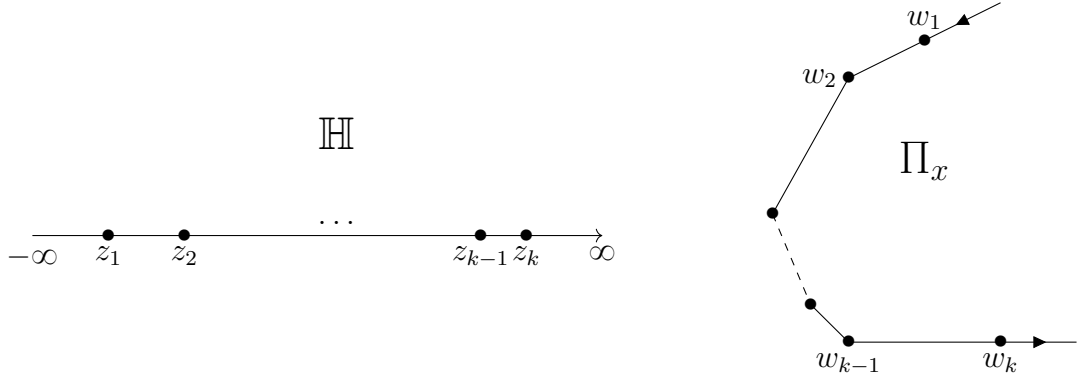

Clearly, $c''_x\not=0.$
Consequently,  we   infer from  \eqref{e:SC} that for every $\zeta\in\partial\H$ and $\theta \in  \C,$ 
\begin{equation}\label{e:SC-bis}
|\tilde \phi_x(\zeta)-\tilde \phi_x(\zeta+\theta)|=|c''_x|\big|\int^{\zeta+\theta}\limits_\zeta \prod\limits_{j=1}^k (\eta-z_j)^{{1\over \gamma_j}-1}d\eta\big|.
\end{equation} 
In order  to  prove  \eqref{e:empty_bis}, we need the following auxiliary result.
 
\begin{lemma}
 \label{L:Real-analysis} Let     $p\geq 1$  and let $s_1,\ldots,s_p \in(-1,0).$
 Let $t_1<\ldots <t_p$ be real numbers.
 Then, for  every $\epsilon>0,$ there is $\delta>0$ such  that for  $a\in[t_l-1,t_p+1],$ we have 
 $$\int_a^{a+\delta} \prod\limits_{j=1}^p  |t-t_j|^{s_j} dt<\epsilon.$$
\end{lemma}
\proof
The  proof is  elementary and we leave it to the interested  reader.
\endproof
Resuming the  proof of \eqref{e:empty_bis}, we consider two cases  according to  the position  of $\zeta\in\partial \H$
with respect  to   the set $Z:=\{z_1,\ldots,z_k\}\subset \partial\H.$
Observe  that $z_1<\cdots <z_k$  since $\tilde\phi_x(z_j)=w_j$ and the $w_j$ are in counterclockwise order. By a change of variables,
we rewrite  \eqref{e:SC-bis} as
\begin{equation}\label{e:SC-bisbis}
|\tilde \phi_x(\zeta)-\tilde \phi_x(\zeta+\theta)|=\int\limits^{|\theta|}_{t=0}\prod\limits_{j=1}^k \big| (\zeta+{\theta\over |\theta|} t)-z_j\big|^{{1\over \gamma_j}-1}dt.
\end{equation}

\noindent {\bf  Case $\dist(\zeta, Z)\ll 1.$ }

Let $1\leq l<m \leq  k$ be such that  $\dist(\zeta,z_j)\ll 1$ for $l\leq j\leq m$
and  $\dist(\zeta,z_j)\geq 1$ otherwise.    So  $ \big| (\zeta+{\theta\over |\theta|} t)-z_j\big|\gtrsim 1$  for $j\not\in [l,m],$ 
we deduce from \eqref{e:SC-bisbis} that
\begin{equation*}
|\tilde \phi_x(\zeta)-\tilde \phi_x(\zeta+\theta)|\lesssim |c''_x|\int\limits^{|\theta|}_{t=0}\prod\limits_{j=l}^m \big| (\zeta+{\theta\over |\theta|} t)-z_j\big|^{{1\over \gamma_j}-1}dt.
\end{equation*}
Moreover, the RHS is  dominated  by a constant times $$|c''_x|\int_{\zeta-|\theta|}^{\zeta+|\theta|}   |s-\zeta|^{\sum_{j=l}^m ({1\over \gamma_j}-1)}ds.$$
Using \eqref{e:gamma_j} we may apply Lemma \ref{L:Real-analysis}. Consequently, the last integral  
 is  small  provided that $c^\star_x:=c$ is  small enough  for $\theta\in\C$ with $|\theta|\in (0,c).$ This proves \eqref{e:empty_bis} in this case.
 
\noindent {\bf  Case $\dist(\zeta, Z)\geq 1.$ }
So  for a constant $0<c\ll 1,$ we  get  that 
$\big| (\zeta+{\theta\over |\theta|} t)-z_j\big|\geq |\zeta-z_j|-|t|\geq   1-c>0$  for $t\in[0,|\theta|]$ and $|\theta|<c.$
Therefore, we deduce from \eqref{e:SC-bisbis} that
\begin{equation*}
|\tilde \phi_x(\zeta)-\tilde \phi_x(\zeta+\theta)|\leq c'c (1-c)^{\sum_{j=1}^k ({1\over \gamma_j}-1)},
\end{equation*}
where $c'$ is a constant depending only on $n.$
Choosing $0<c\ll1$ and $c^\star_x:=c$ small enough,  \eqref{e:empty_bis} holds in this last case. 
\endproof

The  following result due  to  Widder  \cite{Widder}  gives  the Poisson kernel for strips.   
\begin{proposition}\label{P:Widder}
 For $(a,b,c,d)\in\R^4$ with $a^2+b^2>0$ and  $c<d,$ consider  the  strip 
 $$\S=\S_{a,b,c,d}:=\left\lbrace \zeta=u+iv\in\C:\   c<au+bv <d \right\rbrace,$$
 which is  limited by two   parallel  lines  $L_1=\{au+bv=c\}$ and  $L_2=\{au+bv=d\}.$
 Let $R:=\dist(L_1,L_2)$ be the distance between $L_1$ and $L_2.$ See Figure
 \ref{fig:M5}. 
 Then the following  assertions hold:
 \begin{enumerate}
  \item The Poisson  kernel  of $\S$ is  given  by
  $$
  P_{\S}(\zeta,\xi)={\pi\over R}\cdot\,{\sin{\big( {\pi \dist(\zeta, \zeta_\xi) \over R}\big)} \over \cosh{\big({\pi \dist(\xi,\zeta_\xi)\over R}\big)}- \cos{\big( {\pi \dist(\zeta, \zeta_\xi) \over R} \big)}}
  \quad\text{for}\quad \zeta\in\S,\quad \xi\in \partial \S. 
  $$
  Here, if   $\xi\in L_j$  then  $\zeta_\xi$ is the  orthogonal projection of $\zeta$ onto $L_j.$ 
  \item  For $\zeta\in\S$ and $ \xi\in \partial \S,$  
  we have
  $$
  P_{\S}(\zeta,\xi)\leq  2\cdot\,{\dist(\zeta,\partial \S)\over ( \dist(\zeta,\xi))^2}.
  $$
 \end{enumerate}

\end{proposition}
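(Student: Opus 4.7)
My approach is to reduce first to a standard horizontal strip, derive (1) from the classical conformal-mapping computation, and then extract (2) and (3) from (1) by elementary inequalities on the trigonometric and hyperbolic functions that appear.

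\textbf{Step 1: reduction and formula (1).} A rigid motion of $\C$ preserves both Euclidean distances and Poisson kernels, so we may assume $\S=\S_0:=\{u+iv:0<v<R\}$ with $L_1=\R$ and $L_2=\{v=R\}$. The map $\phi(w)=\exp(\pi w/R)$ is a conformal isomorphism from $\S_0$ onto the upper half plane $\H$ which sends the lower boundary onto $(0,\infty)$ and the upper boundary onto $(-\infty,0)$. Pulling back the Poisson kernel $P_\H(z,t)=\pi^{-1}\Im z/|z-t|^2$ via $\phi$ and performing the change of variable $t=\exp(\pi s/R)$ on the lower boundary (and the analogous one on the upper boundary) yields the formula in (1) after elementary simplification, with the correspondence $\dist(x,x_y)\leftrightarrow \Im w$ and $\dist(y,x_y)\leftrightarrow |\Re w - s|$. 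This is Widder's classical computation \cite{Widder}.

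\textbf{Step 2: common setup for (2) and (3).} Write $a:=\pi\dist(y,x_y)/R$ and $b:=\pi\dist(x,x_y)/R\in[0,\pi]$. From $\sin b=\sin(\pi-b)$ and $\sin t\le t$ applied to $\min(b,\pi-b)$, we get the sharp numerator bound
\[
\sin\!\bigl(\pi\dist(x,x_y)/R\bigr)\le \pi\dist(x,\partial\S)/R,
\]
since $\dist(x,\partial\S)=\min(\dist(x,x_y),R-\dist(x,x_y))$. For the denominator I use the identity $\cosh a-\cos b=2\sinh^2(a/2)+2\sin^2(b/2)$, which cleanly separates the hyperbolic and trigonometric contributions.

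\textbf{Step 3: the near regime (2).} When $\dist(x,y)\le R$, both $a,b\le\pi$. The elementary estimates $\sinh(a/2)\ge a/2$ and $\sin(b/2)\ge b/\pi$ (on $[0,\pi/2]$) give $\cosh a-\cos b\gtrsim (a^2+b^2)/R^2\cdot R^2=\pi^2\dist(x,y)^2/R^2$. Combining with Step~2 and the prefactor $\pi/R$ from (1) yields (2).

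\textbf{Step 4: the far regime (3).} When $\dist(x,y)\ge R$ I split on the size of $a$. For $a\gtrsim 1$ we have $\cosh a-\cos b\ge\cosh a-1\gtrsim e^a$, which drives the exponential decay; for $a\lesssim 1$ the quadratic lower bound of Step~3 still applies. Since $\dist(x,y)\le \dist(y,x_y)+\dist(x,x_y)\le \dist(y,x_y)+R$, the loss between $e^{-\pi\dist(y,x_y)/R}$ and $e^{-\pi\dist(x,y)/R}$ is only a multiplicative constant, giving the first inequality in (3). The second inequality is purely numerical: with $t=\dist(x,y)/R\ge 1$, it reduces to $\pi^2 t^2\le 2e^{\pi t}$, which is elementary (verify at $t=1$ and compare derivatives).

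\textbf{Main obstacle.} The delicate case is in (3) when $\dist(x,x_y)$ is close to $R$ (so $x$ sits near the opposite edge $L_{3-j}$) while $\dist(y,x_y)$ is small, so exponential decay in $\dist(y,x_y)$ alone is unavailable. Here one has to exploit the fact that the factor $\dist(x,\partial\S)=R-\dist(x,x_y)$ in the target bound is small, and that the sharp numerator bound $\sin(\pi\dist(x,x_y)/R)\le \pi(R-\dist(x,x_y))/R$ just barely compensates. Balancing these two small quantities against the possibly bounded denominator is the only nontrivial bookkeeping in the proof.
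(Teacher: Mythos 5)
Your overall route is the same as the paper's: reduce by a rigid motion to the horizontal strip of width $R$, obtain (1) from Widder's explicit kernel (equivalently, from the conformal map $e^{\pi w/R}$ onto the half-plane), and then estimate the resulting quotient of $\sin$ and $\cosh-\cos$. The paper disposes of (2)--(3) with a one-line appeal to Taylor expansion, so your Steps 2--4 are exactly the details being left to the reader. Your numerator bound $\sin b\le\min(b,\pi-b)=\pi\dist(x,\partial\S)/R$ and the identity $\cosh a-\cos b=2\sinh^2(a/2)+2\sin^2(b/2)$ are correct and are the right tools (you also silently correct the evident typo $\cosh\mapsto\cos$ in the printed denominator of (1)).

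Two points need fixing. First, Step 3 does not prove (2) as printed. What your estimates give, via $a^2+b^2=\pi^2\dist(x,y)^2/R^2$ (Pythagoras, since $xx_y\perp x_yy$), is $P_\S(x,y)\lesssim\dist(x,\partial\S)/\dist(x,y)^2$. That is the correct and usable bound, but it does not imply the printed inequality: when $\dist(x,y)\ll R$ one has $\dist(x,y)^{-2}\gg (R\dist(x,y))^{-1}$, so the implication goes the wrong way. In fact (2) as literally stated is false: for $x=i\epsilon$, $y=0$ in $\{0<v<R\}$ the kernel behaves like $c/\epsilon\to\infty$ while the right-hand side of (2) equals $\pi/R$. (The denominator in (2) should be $\dist(x,y)^2$, consistent with the second inequality of (3) and with how the proposition is used in Lemma \ref{L:Poisson-inequ}.) You should state that you are proving the corrected inequality rather than asserting that your bound "yields (2)". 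Second, your "main obstacle" paragraph raises a difficulty in (3) but does not resolve it; in fact there is none. Since $\sinh(a/2)\ge a/2$ for all $a\ge0$ and $\sin(b/2)\ge b/\pi$ for all $b\in[0,\pi]$, the lower bound $\cosh a-\cos b\ge\tfrac{2}{\pi^2}(a^2+b^2)$ holds in every regime; and in the regime you worry about ($\dist(y,x_y)$ small, $\dist(x,x_y)$ near $R$) one has $\dist(x,y)\le\sqrt2\,R$, so $e^{\pi\dist(x,y)/R}\le e^{\sqrt2\pi}$ is a constant and the exponential form of (3) reduces to the quadratic bound already established. Replace the description of the obstacle by this verification, and track the absolute constants, since those printed in (1) and (3) are themselves off by universal factors.
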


\begin{figure}
\centering
\usetikzlibrary {angles,calc,quotes}
\begin{tikzpicture}[angle radius=.75cm]
\node (A) at  (-2,0){};
\node (B)  at ( 3,.5)[right] {line $L_1$};
\node (C) at (-2,3) {} ;
\node (D) at ( 3,3.5) [right] {line $L_2$};
\node (Z) at (0.8,2.3) [above]  {$\zeta$};
\node (H)  at (1, 0.3) [below] {$\zeta_\xi$};
\draw  (Z)-- (H);
\node (P) at (1.2,0.32) {};
\node (R) at  (0.99, 0.4) {};
\node  (Q)  at (1.09, 0.41) {};
\draw (1.2,0.32)-- (1.18,.52);
\draw  (1.18,.52) --(0.98, 0.5) ;
\node (Z) at (0.8,2.3)   {$\bullet$};
\node (H)  at (1, 0.3) {$\bullet$};
 \node (X) at(-0.5,0.15) [below] {$\xi$};
 \node (X) at(-0.5,0.15)  {$\bullet$};
\draw  (A)--(B);
\draw  (C)--(D);
\end{tikzpicture}

\caption{We are given a   strip 
 $\S=\S_{a,b,c,d} $
   limited by two   parallel  lines  $L_1=\{au+bv=c\}$ and  $L_2=\{au+bv=d\},$ a point  $\zeta\in\S$ and
   a point $\xi\in \partial \S.$ In this  figure $\xi\in L_1,$ and hence $\zeta_\xi$ is  the orthogonal projection of $\zeta$ onto $L_1.$ Moreover, in this  figure we see that $\dist(\zeta,\partial\S)=\dist(\zeta,L_2).$} \label{fig:M5}
\end{figure}
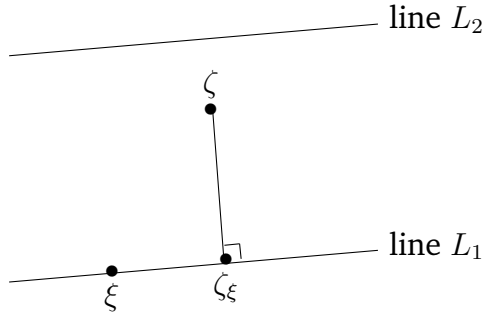
\proof Using  a rotation and  a translation, we may suppose that
the strip  $\S$ is given by   $\S_R:=\left\lbrace \zeta=u+iv\in\C:\   0<v<R \right\rbrace.$
  The change of variable $\zeta\mapsto {\pi \zeta\over  R}$ maps $\S_R$ biholomorphically onto $\S_\pi.$ 
Using this and   the explicit  formula   of the  Poisson kernel of $\S_\pi$ established  in \cite[formula (1)]{Widder}, 
we  get that
$$
P_{\S_R}(\zeta,\xi)={\pi\over R}\cdot\,{\sin{\big( {\pi |\Im (\zeta-\xi)| \over R}\big)} \over \cosh{\big({\pi \Re(\zeta-\xi)\over R}\big)}- \cos{\big( {\pi |\Im(\zeta-\xi)| \over R} \big)}}\quad\text{for}\quad \zeta\in\S_R,
\quad \xi\in \partial \S_R.
$$
Since $ |\Im (\zeta-\xi)|= \dist(\zeta, \zeta_\xi)$ and   $ |\Re (\zeta-\xi)|= \dist(\xi, \zeta_\xi),$
assertion (1) follows from this  formula.

Since  $ {2\over \pi}\leq \min (t,\pi-t)\sin{t}\leq  \min (t,\pi-t)$ for $t\in[0,\pi],$ we infer that
$$
 {2\over \pi}\leq{\sin{\big( {\pi \dist(\zeta, \zeta_\xi) \over R}\big)}\over  {\pi \dist(\zeta, \zeta_\xi) \over R}}\leq 1.
$$
Next, observe that  $\cosh t\geq 1\geq \cos t$ for $t\in\R.$ 
Moreover,   Taylor expansion of the  function $\cosh t$ 
gives that  $\cosh t\geq  t^2$ for $t\in\R.$
Writing  $1-\cos t=2\sin^2{t\over 2},$ we see that
${4\over \pi^2} \leq {2( 1-\cos t)\over t^2}\leq 1$ for $t\in [0,\pi].$
Using  the above  estimates, we obtain, for $\zeta\in\S$ and $ \xi\in \partial \S$ that
\begin{eqnarray*}
\cosh{\big({\pi \dist(\xi,\zeta_\xi)\over R}\big)}- \cos{\big( {\pi \dist(\zeta, \zeta_\xi) \over R} \big)}&=&
\big(\cosh{\big({\pi \dist(\xi,\zeta_\xi)\over R}\big)}-1\big)
+\big( 1- \cos{\big( {\pi \dist(\zeta, \zeta_\xi) \over R} \big)}\big)\\
&\geq &\big({\pi \dist(\xi,\zeta_\xi)\over R}\big)^2 +{1\over 2}\big( {\pi \dist(\zeta, \zeta_\xi) \over R} \big)^2\\
&= & {1\over 2}\big({\pi \dist(\xi,\zeta_\xi)\over R}\big)^2+{1\over 2}\big( {\pi\dist(\xi,\zeta)\over R}\big)^2\geq {1\over 2}\big( {\pi\dist(\xi,\zeta)\over R}\big)^2,
 \end{eqnarray*}
where the  equality in the last line holds by  Pythagorean Theorem. Inserting  these estimates in the  expression of $P_{\S}(\zeta,\xi)$
given in assertion (1), 
we obtain  assertion (2).    
\endproof

The  following   results  presents the  basic technique   to compare  Green functions and Poisson kernels  (see e.g. Krantz \cite{Krantz} for the princple in the case of smooth  bounded domains in $\R^N$). 

\begin{proposition}\label{P:estimate(1)}
  Let $\Omega_1$ and $\Omega_2$ be  (not necessarily  bounded)  convex polygons in $\C.$
  Suppose that $\Omega_1\subset \Omega_2$ and  there is a point $\xi\in\partial\Omega_1\cap  \partial\Omega_2 $ which is not  a vertex of $\Omega_1$ and $\Omega_2.$
  Then, for every $\zeta\in\Omega_1,$ 
 \begin{equation*}
    P_{\Omega_1}(\zeta,\xi) \leq  P_{\Omega_2}(\zeta,\xi) .
 \end{equation*}

\end{proposition}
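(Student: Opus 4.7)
The plan is to run the classical Hopf-type comparison for Green functions, followed by a boundary-normal identification at the shared boundary point $\xi$.

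As a preliminary observation, under the sign convention of Definition \ref{D:Green}, the Green function of a convex polygon $\Omega$ satisfies $G_\Omega(\zeta,\cdot)\geq 0$ on $\Omega$: indeed $G_\Omega(\zeta,\xi)\to+\infty$ as $\xi\to\zeta$ (since the harmonic part of $G_\Omega(\zeta,\xi)+\frac{1}{2\pi}\log|\xi-\zeta|$ is finite near $\zeta$) while $G_\Omega$ vanishes on $\partial\Omega$, so the minimum principle on $\Omega\setminus\{\zeta\}$ gives the claim (in the unbounded case one transfers to $\D$ via \eqref{e:Green-function-invar}). Now fix $\zeta\in\Omega_1\subset\Omega_2$ and set
$$u(\eta):=G_{\Omega_2}(\zeta,\eta)-G_{\Omega_1}(\zeta,\eta),\qquad \eta\in\overline{\Omega_1}\setminus\{\zeta\}.$$
The logarithmic singularities at $\zeta$ cancel, so $u$ extends to a bounded harmonic function on all of $\Omega_1$. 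On $\partial\Omega_1\subset\overline{\Omega_2}$ we have $G_{\Omega_1}(\zeta,\cdot)=0$ and $G_{\Omega_2}(\zeta,\cdot)\geq 0$, hence $u\geq 0$ on $\partial\Omega_1$. The maximum principle (applied, in the unbounded case, after pulling back to $\D$ by the Riemann map from Proposition \ref{P:Poisson}) then yields $u\geq 0$ throughout $\Omega_1$; in particular $u(\xi)=0$, since $\xi$ lies on both $\partial\Omega_1$ and $\partial\Omega_2$.

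To pass to normal derivatives, I would observe that because $\xi$ is not a vertex of either polygon, $\partial\Omega_j$ is locally a line segment through $\xi$, and each $\Omega_j$ therefore has a well-defined outward unit normal $\Nor_\xi^{\Omega_j}$ at $\xi$. Convexity together with the inclusion $\Omega_1\subset\Omega_2$ forces the two supporting lines to coincide, for otherwise they would meet transversally at $\xi$ and $\Omega_1$ would have to leave $\Omega_2$ on one side of $\xi$. Hence $\Nor_\xi^{\Omega_1}=\Nor_\xi^{\Omega_2}=:\Nor_\xi$. Standard boundary regularity makes each $G_{\Omega_j}(\zeta,\cdot)$, and so also $u$, smooth up to $\partial\Omega_1$ near $\xi$. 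Since $u(\xi)=0$ and $u\geq 0$ on a one-sided neighborhood of $\xi$ in $\overline{\Omega_1}$, one obtains $\Nor_\xi u\leq 0$. By \eqref{e:Poisson-kernel},
$$P_{\Omega_1}(\zeta,\xi)-P_{\Omega_2}(\zeta,\xi)=\Nor_\xi\bigl(G_{\Omega_2}-G_{\Omega_1}\bigr)(\zeta,\xi)=\Nor_\xi u\leq 0,$$
which is the desired inequality.

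The only mildly delicate point is the coincidence of the supporting lines at the common non-vertex boundary point of two nested convex polygons; everything else is a routine maximum-principle and Hopf-lemma argument. Possible unboundedness of $\Omega_1$ or $\Omega_2$ causes no real difficulty, thanks to the Riemann maps already supplied by Proposition \ref{P:Poisson}.
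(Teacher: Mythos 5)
Your argument is correct and is essentially the paper's own proof: compare the Green functions $G_{\Omega_2}-G_{\Omega_1}$ via the maximum principle on $\Omega_1$, then take the outward normal derivative at the common non-vertex boundary point $\xi$ (Hopf-type boundary argument) and invoke \eqref{e:Poisson-kernel}. The extra details you supply — positivity of $G_{\Omega_2}$, coincidence of the two supporting lines and hence of the normals at $\xi$, and the reduction of the unbounded case to $\D$ — are points the paper leaves implicit, and they are handled correctly.
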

\proof
By Definition \ref{D:Green}  and  by Proposition \ref{P:Poisson}  (see also identity \eqref{e:Green-function-invar}), 
we see that   $G_{\Omega_1} (\zeta, \cdot)$  equals $0$ on $\partial \Omega_1.$ Moreover,    by the Maximum  Principle for harmonic functions, $G_{\Omega_2} (\zeta, \cdot)\geq 0$
on $\Omega_2.$
Since     $\Omega_1\subset \Omega_2,$ it follows that   $G_{\Omega_2} (\zeta, \cdot)\geq 0$ on $\partial \Omega_1.$
Since the  function $G_{\Omega_1} (\zeta, \cdot)- G_{\Omega_2} (\zeta,\cdot) $  is  harmonic on
$\Omega_1,$
it follows from the Maximum  Principle again  that $G_{\Omega_2} (\zeta,\cdot)- G_{\Omega_1} (\zeta,\cdot) \geq 0$ on $\Omega_1.$ 
On the  other hand, since $\xi\in\partial\Omega_1\cap  \partial\Omega_2, $  we infer from  Definition \ref{D:Green} again that  $ G_{\Omega_2} (\zeta,\xi)- G_{\Omega_1} (\zeta,\xi) =0-0=0.$ Therefore, by the Hopf lemma (see for example \cite[p. 28]{ABR}) applied to the function 
$G_{\Omega_2} (\zeta,\cdot)- G_{\Omega_1} (\zeta,\cdot),$ we get that 
 $$-\Nor_\xi G_{\Omega_2} (\cdot, \xi) - \Big(-\Nor_\xi G_{\Omega_1} (\cdot, \xi)\Big) \geq 0.$$
This, combined with  \eqref{e:Poisson-kernel}, completes the proof.
\endproof

The  next result describes the complete behavior of the Poisson kernel of  a phase space in dimension $n=2.$

\begin{proposition}\label{P:estimate(2)}
  Suppose that $n=2.$  So  $\Pi_x$ is  a  sector in $\C$  with  aperture angle  $\pi/\gamma$ for some $\gamma >1.$  Then there is a constant $c>1$ which depends only on $\lambda_1$ and $\lambda_2$ 
 such that for every $x\in \X,$ $\zeta\in \Pi_x$ and $\xi\in\partial\Pi_x,$
 \begin{equation*}
 c^{-1}\leq   P_x(\zeta,\xi): {\dist_x(\zeta)\over |\zeta-\xi|^2}\,\cdot\Big({ \min\big( \dist^\star_x(\zeta),\dist^\star_x(\xi)\big)\over \max\big( \dist^\star_x(\zeta), \dist^\star_x(\xi)\big)} \Big)^{\gamma-1} 
 \leq c  .
 \end{equation*}

\end{proposition}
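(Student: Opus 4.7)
The plan is to compute $P_x$ in closed form via a Schwarz--Christoffel map to the upper half-plane $\H$ and then match each factor with the quantities in the statement. In the present two-dimensional weakly hyperbolic setting $\Pi_x$ is a sector with apex $v_x$ and aperture $\pi/\gamma$, where $\gamma>1$ depends only on $\arg(\lambda_1/\lambda_2)$. After a rigid motion (which preserves both distances and the Poisson kernel) $\Pi_x$ becomes $v_x+S$ with $S:=\{re^{i\theta}:r>0,\,0<\theta<\pi/\gamma\}$, and $\phi(\zeta):=(\zeta-v_x)^\gamma$ is a biholomorphism of $\Pi_x$ onto $\H$ sending $\infty$ to $\infty$. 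Combining the conformal invariance of Green's functions (identity \eqref{e:Green-function-invar} in the proof of Proposition~\ref{P:Poisson}) with the half-plane formula $P_\H(w,t)=\pi^{-1}\Im w/|w-t|^2$ and the boundary Jacobian $|\phi'(\xi)|=\gamma|\xi-v_x|^{\gamma-1}$ yields the closed form
\begin{equation*}
P_x(\zeta,\xi)=\frac{\gamma}{\pi}\cdot\frac{|\xi-v_x|^{\gamma-1}\,\Im\bigl((\zeta-v_x)^\gamma\bigr)}{\bigl|(\zeta-v_x)^\gamma-(\xi-v_x)^\gamma\bigr|^2}.
\end{equation*}

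The proposition then reduces to two uniform two-sided comparisons, valid for $\zeta\in\Pi_x$, $\xi\in\partial\Pi_x$, with constants depending only on $\gamma$ and $|\lambda_1|,|\lambda_2|$. The first,
$\Im((\zeta-v_x)^\gamma)\approx|\zeta-v_x|^{\gamma-1}\dist(\zeta,\partial\Pi_x)$,
is immediate from writing $\zeta-v_x=re^{i\theta}$ with $\theta\in(0,\pi/\gamma)$: then $\Im((\zeta-v_x)^\gamma)=r^\gamma\sin(\gamma\theta)$ while $\dist(\zeta,\partial\Pi_x)=r\min(\sin\theta,\sin(\pi/\gamma-\theta))$, and the ratio $\sin(\gamma\theta)/\min(\sin\theta,\sin(\pi/\gamma-\theta))$ extends continuously to a strictly positive function on $[0,\pi/\gamma]$. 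The key second estimate is the algebraic comparison
\begin{equation*}
\bigl|(\zeta-v_x)^\gamma-(\xi-v_x)^\gamma\bigr|\approx|\zeta-\xi|\cdot\max\bigl(|\zeta-v_x|,|\xi-v_x|\bigr)^{\gamma-1}.
\end{equation*}
The upper bound is the mean value inequality for $w\mapsto w^\gamma$ along the segment joining $\zeta-v_x$ and $\xi-v_x$, which lies in $\overline S$ by convexity. The lower bound splits into two regimes: if $|\zeta-\xi|$ is comparable to or larger than $\max(|\zeta-v_x|,|\xi-v_x|)$, the two points $(\zeta-v_x)^\gamma,(\xi-v_x)^\gamma\in\overline\H$ are separated by at least $c\max^\gamma$ via direct estimation using that their arguments lie in $[0,\pi]$; if $|\zeta-\xi|\ll\min(|\zeta-v_x|,|\xi-v_x|)$, the mean value theorem along the segment gives the matching lower bound $\gtrsim\max^{\gamma-1}|\zeta-\xi|$.

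Substituting these two comparisons into the closed form produces
\begin{equation*}
P_x(\zeta,\xi)\approx\frac{\dist(\zeta,\partial\Pi_x)}{|\zeta-\xi|^2}\cdot\left(\frac{\min(|\zeta-v_x|,|\xi-v_x|)}{\max(|\zeta-v_x|,|\xi-v_x|)}\right)^{\gamma-1}.
\end{equation*}
Finally, the geometric quantities $\dist(\eta,\partial\Pi_x)$ and $|\eta-v_x|$ translate into the intrinsic $\dist_x(\eta)$ and $\dist^\star_x(\eta)$ of \eqref{e:dist_x}: since $\Pi_x$ has only two boundary rays issuing from $v_x$, one readily verifies $\max_j|\lambda_j|\dist_{x,j}(\eta)\approx|\eta-v_x|$ and $\min_j|\lambda_j|\dist_{x,j}(\eta)\approx\dist(\eta,\partial\Pi_x)$ for $\eta\in\overline{\Pi_x}$, with constants depending only on $\gamma$ and $|\lambda_j|$; the additive $+1$ shifts in the definitions of $\dist_x,\dist^\star_x$ are absorbed into $c$ by a separate check in the bounded-distance regimes.

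The main obstacle is the lower bound in the algebraic comparison $|w_1^\gamma-w_2^\gamma|\gtrsim|w_1-w_2|\max(|w_1|,|w_2|)^{\gamma-1}$ on $\overline S$: since $w\mapsto w^\gamma$ is only Hölder near the apex when $\gamma>1$, the case-splitting between the ``far'' and ``near'' regimes must be handled with care, and one must verify that all constants depend only on $\gamma$, hence only on $\lambda_1,\lambda_2$.
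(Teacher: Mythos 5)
Your proposal is correct in substance and follows the same basic strategy as the paper (normalize the sector, map it to the upper half-plane by the power map $\zeta\mapsto(\zeta-v_x)^\gamma$, and transport the half-plane kernel with the boundary Jacobian $|\phi'(\xi)|$). The difference is in the execution: the paper does not prove the resulting comparison from scratch but imports the four/five-case analysis of Lemma 3.3 of \cite{NguyenVietAnh18a} (estimates on $U$, $V$, $|\xi|$ in the various regimes), whereas you derive the closed form and reduce everything to the two self-contained comparisons $\Im((\zeta-v_x)^\gamma)\approx|\zeta-v_x|^{\gamma-1}\dist(\zeta,\partial\Pi_x)$ and $|w_1^\gamma-w_2^\gamma|\approx|w_1-w_2|\max(|w_1|,|w_2|)^{\gamma-1}$ on the closed sector. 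This is cleaner and removes the dependence on the external lemma; your two regimes (far: $|\zeta-\xi|\geq\epsilon\max$; near: $|\zeta-\xi|<\epsilon\max$, which forces $|\zeta-\xi|\lesssim\min$) do exhaust all cases.

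Two points need tightening. First, in the near regime a ``mean value theorem'' does not give lower bounds for holomorphic maps; you must write $w_1^\gamma-w_2^\gamma=\gamma(w_1-w_2)\int_0^1 w(t)^{\gamma-1}\,dt$ and observe that along the segment $\arg\big(w(t)^{\gamma-1}\big)\in[0,(\gamma-1)\pi/\gamma]$, an interval of length strictly less than $\pi$, so that $\big|\int_0^1 w(t)^{\gamma-1}dt\big|\geq\sin(\pi/(2\gamma))\min_t|w(t)|^{\gamma-1}\gtrsim\max(|w_1|,|w_2|)^{\gamma-1}$; as written the step is incomplete. Second, the additive $+1$ in \eqref{e:dist_x} cannot be ``absorbed into $c$'' uniformly on all of $\Pi_x$: as $\zeta\to\zeta_0\in\partial\Pi_x\setminus\{\xi\}$ one has $P_x(\zeta,\xi)\to0$ while $\dist_x(\zeta)\geq1$ and the ratio of the $\dist^\star_x$ stays bounded below, so the two-sided bound as literally stated degenerates; the estimate is correct with the actual distances (drop the $+1$) or for $\zeta\in\Pi^1_x$, which is the only regime in which the proposition is applied (via Proposition~\ref{P:estimate(3)} and Lemma~\ref{L:Poisson-inequ}). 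The paper's own proof makes the same silent identification in \eqref{e:dists-in-2D} and \eqref{e:conversion}, so this is a defect of the statement rather than of your argument, but it should be recorded rather than waved away.
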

\proof
We  use  the proof and  the notation of Lemma 3.3 in \cite{NguyenVietAnh18a}. By using  a translation and  a rotation in $\C$ if necessary,  we may assume  without loss of generality that
$\lambda_1=1$ and $\lambda_2=a-ib$ with $a,b\in\R$ and $b>0.$ So  the aperture angle of $\Pi_x$ is $\alpha:=\pi/\gamma=\arctan{(-b/a)}  $ and  $\Pi_x$ is  given by 
$$\Pi_x:=\left \{ \zeta\in\C:   \arg \zeta \in(0,\pi/\gamma)  \right\} . $$
Write $\zeta:=u+iv$ with $u,v\in\C.$ So 
 $\Pi_x=\{\zeta\in\C:\  v>0 \quad\text{and}\quad bu+av>0\}$
 and $\partial \H_{x,1}=\{v=0\},$  $\partial  \H_{x,2}=\{bu+av=0\}.$ 
The
map 
\begin{equation}\label{e:u,v_vs_U,V}
\tau: z \mapsto z^\gamma
\end{equation}
 maps the sector $\Pi_x$  to the upper half plane $\H:=\{U+iV\in\C:\  V>0\}$ with coordinates $(U, V ).$ Write 
 $$y:=\tau(\xi)\qquad\text{
 and}\qquad  U+iV:=\tau(\zeta).$$
 So $y$ lies on the real line  $\partial \H=\{U+iV\in\C:\  V=0\}.$   
  Note that
  \begin{equation}\label{e:dists-in-2D}
  \dist_x(\zeta)\approx \min\{v, bu+av\}\qquad\text{and}\qquad  \dist^\star_x(\zeta)\approx \max\{v, bu+av\}.
  \end{equation}
  We  deduce from    \eqref{e:u,v_vs_U,V} and  $y:=\tau(\xi)$  that 
  $$
  P_\H(U+iV,y)dy=P_x(\zeta,\xi)d\Leb_1(\xi)
  $$
  and  $dy=|\xi|^{\gamma-1}d\xi.$  So  it  follows  from  the  explicit  formula  $P_\H(U+iV,y)= {V\over V^2+( y-U)^2} $ that  
  \begin{equation}\label{e:change-Poisson}
   P_x(\zeta,\xi)={V\over V^2+( y-U)^2}|\xi|^{\gamma-1}.
  \end{equation}
Let   $c_1, c_2,c_3 >1$ be constants large enough  with $c_3>c_2$   given by   Lemma 3.3 in \cite{NguyenVietAnh18a}.   
By  assertion (1) of  that lemma  and by \eqref{e:dists-in-2D}, we get that
\begin{equation}  {1\over c_1}\leq {(\dist^\star_x(\zeta))^\gamma\over \sqrt{V^2+U^2}}\leq c_1\qquad \text{and}\qquad   {1\over c_1}\leq {(\dist^\star_x(\zeta))^{\gamma-1}\dist_x(\zeta) \over V}\leq c_1 . 
\end{equation}
Note that by equality   $y=\tau(\xi)=\xi^\gamma$ and by  \eqref{e:dists-in-2D},
\begin{equation}\label{e:conversion}  (1+|y|)^{1/\gamma}\approx  |\xi|\qquad \text{and}\qquad\dist_x^\star(\xi)\approx |\xi|.
\end{equation}
According  to  Lemma 3.3 in \cite{NguyenVietAnh18a}, we consider  four  cases.
\\
\noindent{\bf  Case} $ \dist^\star_x(\zeta)\geq  c_2|\xi|$ :
In this case        $\dist^\star_x(\zeta)\approx  |\zeta|$   and  $|\zeta-\xi|\approx \dist^\star_x(\zeta).$   This, combined with 
 assertion (2) of  Lemma 3.3 in \cite{NguyenVietAnh18a} and  \eqref{e:change-Poisson} and \eqref{e:dists-in-2D},  \eqref{e:conversion}, implies that
$$
P_x(\zeta,\xi)\approx {\dist_x(\zeta)\over( \dist^\star_x(\zeta))^{\gamma+1}}|\xi|^{\gamma-1}\approx  {\dist_x(\zeta)\over |\zeta-\xi|^2}\,\cdot\Big({|\xi|\over \dist^\star_x(\zeta)}\Big)^{\gamma-1}.
$$
So  the conclusion of the lemma is  true in this case.
\\
\noindent{\bf  Case} $ \dist^\star_x(\zeta)\leq  c^{-1}_2|\xi|$ : In this case         $|\zeta-\xi|\approx |\xi| \approx \dist^\star_x(\xi).$   This, combined with 
 assertion (3) of  Lemma 3.3 in \cite{NguyenVietAnh18a} and  \eqref{e:change-Poisson}  and  \eqref{e:dists-in-2D},  \eqref{e:conversion}, implies that
$$
P_x(\zeta,\xi)\approx {\dist_x(\zeta)( \dist^\star_x(\zeta))^{\gamma-1}|\xi|^{\gamma-1}\over |\xi|^{2\gamma}}   \approx
{\dist_x(\zeta)\over |\zeta-\xi|^2}\,\cdot\Big({ \dist^\star_x(\zeta)\over  |\xi|}\Big)^{\gamma-1}.
$$
So  the conclusion of the lemma is  also true in this second  case.

\noindent{\bf  Case} $  c^{-1}_2|\xi|\leq  \dist_x(\zeta), \dist^\star_x(\zeta)\leq  c_2|\xi|$ : In this case    $\dist_x(\zeta)  \approx \dist^\star_x(\xi)\approx |\xi|$ and
$|\zeta-\xi|\approx |\xi|. $    This, combined with 
 assertion (4) of  Lemma 3.3 in \cite{NguyenVietAnh18a} and  \eqref{e:change-Poisson}  and  \eqref{e:dists-in-2D},  \eqref{e:conversion}, implies that
$$
P_x(\zeta,\xi)\approx {|\xi|^{\gamma-1}\over |\xi|^\gamma}   \approx 
{\dist_x(\zeta)\over |\zeta-\xi|^2}.
$$
So  the conclusion of the lemma is  also true in this third case.

\noindent{\bf  Case}  $     \dist_x(\zeta)\leq  c^{-1}_3|\xi|$ and  $  c^{-1}_2|\xi|\leq   \dist^\star_x(\zeta)\leq  c_2|\xi|$ :
 Following the proof of  assertion (5) in   Lemma 3.3 in \cite{NguyenVietAnh18a},  we may suppose  without loss of generality that $v\leq  bu+av.$ For  every $1\leq v\leq c_3^{-1}(1+|y|)^{1/\gamma},$
there exists  a    solution  $\hat u:=\hat u(y,v)$ of the following   equation
$$  \widehat U=y,\qquad\text{where}\  \widehat U+i\widehat V=(\hat u+iv)^\gamma=\tau(\hat u+iv)$$
which satisfies
   $$ c_2^{-1}(1+|y|)^{1/\gamma}\leq \hat u(y,v),\rho(y,v)\leq c_2(1+|y|)^{1/\gamma},$$
  where $\rho(y,v):=b\hat u(y,v)+av.$ 
    Let $\rho:=\rho(y,v) .$ 
    
   There are two subcases.
    
    \noindent {\bf Subcase:} $y\geq 0.$
    
    As $y=\tau(\xi),$  we see that  $\xi$ lies on the   ray  $\{v=0,\ u>0\}.$ 
    So by \eqref{e:dists-in-2D},
    $|\xi|\approx |y|^{1/\gamma}\approx \dist^\star_x(\zeta)$  and $\dist_x(\zeta)\approx  v.$ 
    By Lemma 3.4  in \cite{NguyenVietAnh18a} applied to $y+i\widehat V= \widehat U+i\widehat V=(\hat u+iv)^\gamma$
    and $y=\xi^\gamma,$ we have that
    $$
    \widehat V\approx  |(\hat u(y,v)+iv)-\xi|\xi^{\gamma-1}.
    $$
    On the other hand,  inequality (13)  in  Lemma 3.4  and assertion (1)  of  Lemma 3.3  in \cite{NguyenVietAnh18a} together imply that
    $$
    \widehat V\approx  V \approx  v\xi^{\gamma-1}.
    $$
    Hence, we infer that
    \begin{equation}\label{e:equiv-v}|(\hat u(y,v)+iv)-\xi|\approx   |v|
    \end{equation}   when $c_3$ is large enough. Consequently, we get that
    \begin{equation*}
     |\zeta-\xi|=|(u+iv)-\xi|\leq  |(\hat u(y,v)+iv)-\xi| +|u-\hat u(y,v)|\lesssim |v|+|(bu+av)-\rho|.
    \end{equation*}
On the other hand, since  $|\zeta-\xi| \geq \dist(\zeta, \partial \Pi_x)\gtrsim  \dist_x(\zeta)\approx  v,$ we deduce from \eqref{e:equiv-v} that
$$
  |\zeta-\xi|\approx    |\zeta-\xi| +v\approx |\zeta-\xi|+ |(\hat u(y,v)+iv)-\xi|\geq  |u-\hat u(y,v)|\gtrsim |v|+|(bu+av)-\rho|.
$$
So $ |v|+|(bu+av)-\rho|\approx |\zeta-\xi|.$
    This, combined with 
 assertion (5) of  Lemma 3.3 in \cite{NguyenVietAnh18a} and  \eqref{e:change-Poisson}  and  \eqref{e:dists-in-2D},  \eqref{e:conversion}, implies that
$$
P_x(\zeta,\xi)\approx {v\over v^2 +|(bu+av)-\rho|^2}   \approx 
{\dist_x(\zeta)\over |\zeta-\xi|^2}.
$$
So  the conclusion of the lemma is  also true in this first subcase of the fourth case.

\noindent {\bf Subcase:} $y\leq 0.$
    
    As $y=\tau(\xi),$  we see that  $\xi$ lies on the   ray  $\{bu+av=0\}\cap  \partial \Pi_x$ and $|(\hat u(y,v)+iv)-\xi|\leq c_2  |v|$   when $c_3$ is large enough.
    So  $|\xi|\approx |y|^{1/\gamma}\approx \dist^\star_x(\zeta)$  and $\dist_x(\zeta)\approx  v.$  Moreover,  $$ v+|(bu+av)-\rho|\approx  v+|bu+av|\approx |\xi|\approx  |\zeta-\xi| .$$
     This, combined with 
 assertion (5) of  Lemma 3.3 in \cite{NguyenVietAnh18a} and  \eqref{e:change-Poisson}  and  \eqref{e:dists-in-2D},  \eqref{e:conversion}, implies that
$$
P_x(\zeta,\xi)\approx {v\over v^2 +|(bu+av)-\rho|^2}   \approx 
{\dist_x(\zeta)\over |\zeta-\xi|^2}.
$$
So  the conclusion of the lemma is  also true in this last subcase of the fourth case.
\endproof

\begin{proposition}\label{P:estimate(3)}
   Suppose that $n\geq 2.$ Then    there are constants $c,\gamma>1$ which depend only on   $\lambda_1,\ldots,\lambda_n$  
 with the following property.  For every $x\in \X$   and for  $\zeta\in \Pi^{r_1}_x$ and $\xi\in\partial\Pi_x\cap \partial \H_{x,l},$   and    for every $1\leq  k\leq n$
 such that  $ \lambda_l/\lambda_k\not\in\R,$   we have 
 \begin{equation*}
    P_x(\zeta,\xi)\leq  c {   \min\big(\dist_{x,l}(\zeta),   \dist_{x,k}(\zeta)  \big)\over |\zeta-\xi|^2}  \,\cdot \Big({  \min\big( \max\big(\dist_{x,l}(\zeta),   \dist_{x,k}(\zeta)  \big),
    \dist_{x,k}(\xi)\big)
    \over \max \big(     \max\big(\dist_{x,l}(\zeta),   \dist_{x,k}(\zeta)  \big)             , \dist_{x,k}(\xi)\big) }\Big)^{\gamma-1}.
 \end{equation*}

\end{proposition}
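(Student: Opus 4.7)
The plan is to reduce this multi-sided polygon estimate to the two-dimensional sector case already handled in Proposition \ref{P:estimate(2)}, via the Hopf-type comparison principle of Proposition \ref{P:estimate(1)}. The guiding observation is that the right-hand side of the claimed inequality only involves the two distances $\dist_{x,l}$ and $\dist_{x,k}$, which suggests forgetting the other $n-2$ half-plane constraints defining $\Pi_x$.

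Concretely, I would introduce the auxiliary convex sector
$$\Omega:=\H_{x,l}\cap\H_{x,k}.$$
Since $\lambda_l/\lambda_k\notin\R$, the lines $\partial\H_{x,l}$ and $\partial\H_{x,k}$ are non-parallel, so $\Omega$ is an unbounded wedge with a single finite vertex; its aperture angle $\pi/\gamma$ depends only on the argument of $\lambda_l/\lambda_k$, and $\gamma>1$ is exactly the exponent that Proposition \ref{P:estimate(2)} attaches to the two-variable problem with eigenvalues $(\lambda_l,\lambda_k)$. By \eqref{e:phase-space} we have $\Pi_x\subset\Omega$, and $\xi\in\partial\H_{x,l}\cap\partial\Omega$. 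Applying Proposition \ref{P:estimate(1)} at this common boundary point gives $P_x(\zeta,\xi)\leq P_\Omega(\zeta,\xi)$. It remains to apply Proposition \ref{P:estimate(2)} to $\Omega$, regarded as the phase space of the two-variable problem with eigenvalues $(\lambda_l,\lambda_k)$. Since $\partial\Omega$ consists only of pieces of $\partial\H_{x,l}$ and $\partial\H_{x,k}$, the distances built from $\Omega$ as in \eqref{e:dist_x} reduce to $\min\bigl(\dist_{x,l}(\zeta),\dist_{x,k}(\zeta)\bigr)$ and $\max\bigl(\dist_{x,l}(\zeta),\dist_{x,k}(\zeta)\bigr)$ respectively (up to $|\lambda_l|,|\lambda_k|$-factors absorbed in $c$), while for $\xi\in\partial\H_{x,l}$ the starred distance reduces to $\dist_{x,k}(\xi)$. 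Substituting these matches into the two-dimensional bound of Proposition \ref{P:estimate(2)} yields exactly the inequality claimed.

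The delicate point, and in my estimation the main obstacle, is the correct application of Proposition \ref{P:estimate(1)} when $\xi$ is either a vertex of $\Pi_x$ lying on $\partial\H_{x,l}$, or the unique finite vertex of $\Omega$ (namely the intersection point of $\partial\H_{x,l}$ with $\partial\H_{x,k}$), since the Hopf-type argument behind Proposition \ref{P:estimate(1)} is formulated at smooth boundary points. The vertex-of-$\Pi_x$ case is immediate: $\xi$ remains a smooth point of $\partial\Omega$, so the comparison applies unchanged. The vertex-of-$\Omega$ case is more subtle because there $\dist_{x,k}(\xi)=0$ and the right-hand side of the claim vanishes; one handles it either by continuity in $\xi$ along $\partial\H_{x,l}$ away from the vertex, or by simply noting that such $\xi$ form a finite (hence $\Leb_1$-null) subset of $\partial\Pi_x$, which is sufficient for all subsequent uses of the estimate inside the integrations of Lemmas \ref{L:Poisson-representation} and \ref{L:Lelong-representation}.
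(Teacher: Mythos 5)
Your proposal is correct and follows essentially the same route as the paper: the paper's proof also sets $\Omega_2:=\H_{x,k}\cap\H_{x,l}$, applies Proposition \ref{P:estimate(1)} to get $P_x(\zeta,\xi)\leq P_{\Omega_2}(\zeta,\xi)$, and then invokes Proposition \ref{P:estimate(2)} together with \eqref{e:dist_x}. Your additional remarks on the vertex points of $\partial\Pi_x$ and of the sector are a sensible refinement that the paper leaves implicit.
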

\proof
Consider the sector $\Omega_2:=\H_{x,k}\cap \H_{x,l}.$ 
Applying  Proposition \ref{P:estimate(1)}  to $\Omega_1:=\Pi_x$ and $\Omega_2$ yields that
$$
P_x(\zeta,\xi)\leq P_{\Omega_2}(\zeta,\xi).
$$
Applying  Proposition  \ref{P:estimate(2)} to  the RHS and using \eqref{e:dist_x}, the result follows. See  Figure \ref{fig:M6} for an illustration of this proof.
\endproof



\begin{figure}
\centering
 
\begin{tikzpicture}[angle radius=2cm]
        \coordinate (A) at (1,0);
        \coordinate (B) at (3,0);
        \coordinate[label=above:$\xi$] (xi) at (1.5,0);
        \coordinate (C) at (0,1.7);
        \coordinate (D) at (0.5,0.5);
        \coordinate (w_0) at (3,5.5);
        \coordinate (w_1) at (2,5);
        \coordinate (w_2) at (1,4);
         \coordinate (w_k+1) at (4,0.25);
       
\draw (A)--(B)  node[
    pos=0.5,below] {$\partial\H_{x,l}$};
\draw (w_2)--(w_1);
\draw (C)--(w_2);
\draw[dashed] (C)--(D);
\draw (D)--(A);
\draw (A)--(B);
\draw (w_1)--(w_2) node[
    pos=0.5,sloped,above] {$\partial\H_{x,k}$};

    \coordinate (X) at (intersection cs:first line={(w_1)--(w_2)}, second line={(A)--(B)});
    \node at (X)  {$\bullet$}; 
         \draw[fill=orange!25]
         (w_0) -- (w_1) -- (w_2) -- (C) -- (D) -- (A) -- (B) --(w_k+1) ;
         \draw[fill=red]  (C)--(D);
\coordinate (Y) at (intersection cs:first line={(w_1)--(w_2)}, second line={(w_0)--(w_k+1)});
\coordinate (Z) at (intersection cs:first line={(w_0)--(w_k+1)}, second line={(A)--(B)});



      \draw (2,2.5)  node  [above]{\Large $\Omega_1:=\Pi_x$};

 
 \draw (2.8,1.5) node [above]{$\zeta$};
 
 \draw (2.8,1.5) node {$\bullet$};
 

 \draw (1.5,0) node [above]{$\xi$};
 
 \draw   (1.5,0)  node {$\bullet$};

        
\path
(X) edge [red, thick] (Y)
(X) edge [red, thick] (Z)
pic ["$\Omega_2$", draw, dashed,red] 
{angle=Z--X--Y};


\draw (w_1) node {$\bullet$};
\draw (w_2) node {$\bullet$};
\draw (C) node {$\bullet$};
\draw (D) node {$\bullet$};
\draw (A) node {$\bullet$};
\draw (B) node {$\bullet$};    
        
    \end{tikzpicture}

\caption{We apply  Proposition \ref{P:estimate(1)}  to $\Omega_1:=\Pi_x$ and  $\Omega_2:=\H_{x,k}\cap \H_{x,l},$
and to     $\zeta\in \Pi^{r_1}_x$ and $\xi\in\partial\Pi_x\cap \partial \H_{x,l}.$ } \label{fig:M6}
\end{figure}
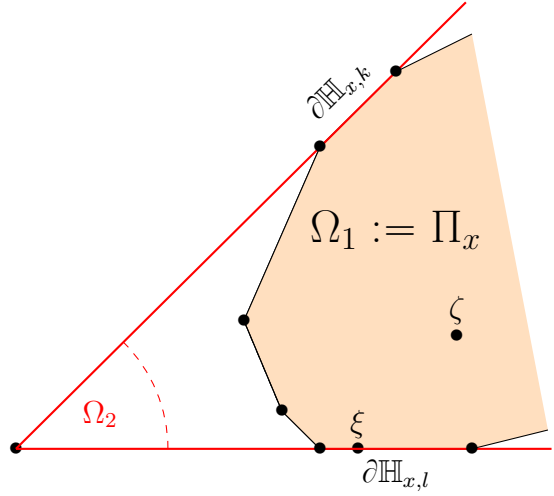

\section{Proof of the Main Theorem}\label{S:Main-Theorem}
Suppose without loss of generality that  $\widehat \Fc=(\overline\D^n, \widehat \Lc,\{0\})$  is a singular holomorphic  foliation  associated to a linear vector field  which is 
defined on an open neighborhood of $\overline\D^n.$ 
Suppose also  that
$\Fc$ is  the restriction of $\widehat \Fc$ on a  possibly smaller  open neighborhood of $\overline\D^n.$
Prior to the  proof of the Main Theorem, some auxiliary  estimates are needed.
\begin{lemma}\label{L:Poisson-inequ}
 There are constants $\gamma, c>1$  independent of $x\in\X'$ and $\zeta\in\Pi_x$ and $\xi\in\partial\Pi_x$ such that
\begin{equation*}
 P_x(\zeta,\xi)\leq  c \min\big(1, 
 \big({\dist^\star_x(\xi)\over \dist_x(\zeta)}\big)^{\gamma-1}\big) \dist_x(\zeta) {1\over |\zeta-\xi|^2}.
 \end{equation*}
\end{lemma}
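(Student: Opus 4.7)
The plan is to derive the asserted estimate from Proposition \ref{P:estimate(3)} by a judicious choice of the auxiliary indices $l$ and $k$. The starting observation is that the weak hyperbolicity hypothesis forces, for every $l\in\{1,\ldots,n\}$, the existence of at least one $k$ with $\lambda_k/\lambda_l\notin\R$; otherwise all pairwise ratios $\lambda_j/\lambda_m$ would be real, contradicting the hypothesis. Since the $\lambda_j$ are finitely many complex numbers, one simultaneously gets a uniform positive lower bound on $|\Im(\lambda_k/\lambda_l)|$ over all admissible pairs, which will be crucial for uniformity in $x\in\X$.

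Given $\zeta\in\Pi_x$ and $\xi\in\partial\Pi_x$, I would pick $l$ so that $\xi\in\partial\H_{x,l}\cap\partial\Pi_x$ (forcing $\dist_{x,l}(\xi)=0$) and then choose $k$ admissible for this $l$ so as to maximise $|\lambda_k|\dist_{x,k}(\zeta)$. Assuming first that $\zeta\in\Pi^1_x$, Proposition \ref{P:estimate(3)} applied with these $l,k$ gives
\begin{equation*}
P_x(\zeta,\xi)\lesssim\frac{\min(\dist_{x,l}(\zeta),\dist_{x,k}(\zeta))}{|\zeta-\xi|^2}\Big(\frac{\min(M,\dist_{x,k}(\xi))}{\max(M,\dist_{x,k}(\xi))}\Big)^{\gamma-1},
\end{equation*}
with $M=\max(\dist_{x,l}(\zeta),\dist_{x,k}(\zeta))$. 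The task is then to recast this in terms of the global quantities $\dist_x(\zeta)$ and $\dist^\star_x(\xi)$. Using $\dist_x(\zeta)-1=\min_j|\lambda_j|\dist_{x,j}(\zeta)$ and the maximality in the choice of $k$ (together with the geometric interpretation of $\lambda_j/\lambda_l\in\R$ as parallelism of $\partial\H_{x,j}$ to $\partial\H_{x,l}$), one expects $\min(\dist_{x,l}(\zeta),\dist_{x,k}(\zeta))\lesssim\dist_x(\zeta)$ and $|\lambda_k|\dist_{x,k}(\xi)\gtrsim\dist^\star_x(\xi)-1$; the ratio in parentheses is then comparable to $\min(\dist^\star_x(\xi),\dist_x(\zeta))/\max(\dist^\star_x(\xi),\dist_x(\zeta))$, and a routine rewriting produces the stated $\min(1,(\dist^\star_x(\xi)/\dist_x(\zeta))^{\gamma-1})$ factor.

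For $\zeta\in\Pi_x\setminus\Pi^1_x$ one has $\dist(\zeta,\partial\Pi_x)\leq 1$, and since the nearest boundary point lies on some $\partial\H_{x,j_0}$, this yields $\dist_{x,j_0}(\zeta)\leq 1$ and hence $\dist_x(\zeta)\approx 1$. In this regime the target bound reduces to $P_x(\zeta,\xi)\lesssim\min(1,\dist^\star_x(\xi)^{\gamma-1})/|\zeta-\xi|^2$, which follows by combining the domain monotonicity of Proposition \ref{P:estimate(1)} (applied to the inclusion $\Pi_x\subset\H_{x,l}\cap\H_{x,k}$) with the sharp two-dimensional estimate of Proposition \ref{P:estimate(2)} for the sector $\H_{x,l}\cap\H_{x,k}$.

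The main obstacle I anticipate is the passage from the index-specific distances $\dist_{x,k}(\cdot)$ produced by Proposition \ref{P:estimate(3)} to the global $\dist_x$ and $\dist^\star_x$. If the index maximising $|\lambda_j|\dist_{x,j}(\xi)$ happens to be parallel to $\partial\H_{x,l}$ (hence inadmissible for our choice of $k$), one must use the convex geometry of $\Pi_x$ inside the unit polydisc, together with the quantitative weak hyperbolicity lower bound on $|\Im(\lambda_k/\lambda_l)|$ noted at the outset, to show that its distance can nevertheless be controlled by $\dist_{x,k}(\xi)$ for some admissible $k$ up to constants depending only on $\lambda_1,\ldots,\lambda_n$.
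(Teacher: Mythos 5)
Your overall strategy (domain monotonicity plus the two--dimensional sector estimate of Propositions \ref{P:estimate(1)}--\ref{P:estimate(3)}) is the right starting point, and your reduction of the weak hyperbolicity hypothesis (every $l$ admits some $k$ with $\lambda_k/\lambda_l\notin\R$) is exactly the observation the paper uses in its degenerate subcase. But there is a genuine gap: you never treat the configuration in which the supporting line realizing $\dist_x(\zeta)$, say $\partial\H_{x,j_0}$, is \emph{parallel to and distinct from} the edge $\partial\H_{x,l}$ carrying $\xi$. Your comparisons only ever use sectors $\H_{x,l}\cap\H_{x,k}$ with $\lambda_k/\lambda_l\notin\R$, and such a sector simply does not see the constraint from $\H_{x,j_0}$. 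Concretely, take $n=3$, $\lambda_1=\lambda_2=1$, $\lambda_3=i$, $\xi\in\partial\H_{x,1}$, and $\zeta$ at distance $\le 1$ from $\partial\H_{x,2}$ but at distance $N\gg1$ from $\partial\H_{x,1}$ and $\partial\H_{x,3}$. Then $\dist_x(\zeta)\approx 1$ and the target bound is $O(|\zeta-\xi|^{-2})$, while the only admissible sector $\H_{x,1}\cap\H_{x,3}$ has $\zeta$ at depth $\approx N$, and Proposition \ref{P:estimate(2)} (which is a two--sided estimate, so cannot be improved) gives a bound of order $N\cdot|\zeta-\xi|^{-2}$ up to the angular factor, which is off by an unbounded factor when $\dist^\star$ of $\xi$ in that sector is comparable to $N$. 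The paper closes this case by comparing $\Pi_x$ with the \emph{strip} bounded by the two parallel lines $\partial\H_{x,l}$ and $\partial\H_{x,j_0}$ and invoking Widder's explicit Poisson kernel for strips (Proposition \ref{P:Widder}(2)--(3)); this ingredient is entirely absent from your proposal and cannot be dispensed with. The same omission undermines your treatment of $\zeta\in\Pi_x\setminus\Pi^1_x$, which relies on the same sector comparison.

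A secondary but real problem is your choice of $k$: maximizing $|\lambda_k|\dist_{x,k}(\zeta)$ over admissible indices does not give $\min\big(\dist_{x,l}(\zeta),\dist_{x,k}(\zeta)\big)\lesssim\dist_x(\zeta)$, since the global minimum $\dist_x(\zeta)-1=\min_j|\lambda_j|\dist_{x,j}(\zeta)$ may be attained at an index outside the pair $\{l,k\}$ (as in the example above). The paper instead takes $k$ to be the index \emph{realizing} $\dist_x(\zeta)$, which makes the numerator bound immediate, and then splits into three cases: $\lambda_l/\lambda_k\notin\R$ (apply Proposition \ref{P:estimate(3)} directly; the angular factor is bounded by $\dist_{x,k}(\xi)/\dist_{x,k}(\zeta)\lesssim\dist^\star_x(\xi)/\dist_x(\zeta)$, so no delicate passage between local and global distances is needed there --- your anticipated ``main obstacle'' concerning the index maximizing $|\lambda_j|\dist_{x,j}(\xi)$ is not where the difficulty lies, because only an upper bound $\dist_{x,k}(\xi)\lesssim\dist^\star_x(\xi)$ is required); $l=k$ (replace $k$ by any $j$ with $\lambda_l/\lambda_j\notin\R$); and $l\ne k$ with $\lambda_l/\lambda_k\in\R$ (the strip case). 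You should restructure your argument along these lines and add the strip/Widder case.
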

\proof   
    Let $1\leq  l,k\leq n $ be   determined by  $\xi\in \partial \H_{x,l}$  and  $\dist_x(\zeta)=\dist_{x,k}(\zeta).$
   There are  three cases to consider.
   \\
   \noindent  {\bf  Case} $\lambda_l/\lambda_k\not\in\R:$
   Applying  Proposition \ref{P:estimate(3)} yields that
 \begin{equation*}
    P_x(\zeta,\xi)\leq  c {\dist_{x,k}(\zeta)\over |\zeta-\xi|^2}  \,\cdot  \Big({  \min\big( \max\big(\dist_{x,l}(\zeta),   \dist_{x,k}(\zeta)  \big),\dist_{x,k}(\xi)\big)
    \over \max \big(     \max\big(\dist_{x,l}(\zeta),   \dist_{x,k}(\zeta)  \big)             , \dist_{x,k}(\xi)\big) }\Big)^{\gamma-1}.
 \end{equation*}
 The  expression  in  big parenthesis in the RHS is smaller than or equal to  $\ { \dist_{x,k}(\xi)\over  \dist_{x,k}(\zeta)},$  which is,  in turn, bounded from  above by
 $ {\dist^\star_x(\xi)\over \dist_x(\zeta)}.  $ Hence,  the desired inequality follows.
\\
 \noindent  {\bf  Case} $l=k:$
   
Since  the  singularity $\{0\}$ is  weakly hyperbolic, there is $1\leq j  \leq n$ such that $\lambda_l/\lambda_j\not\in\R.$
We argue as  in the first case    for $j$  instead of $k,$ and the proof in this case is thereby completed. 
\\
\noindent  {\bf  Case} $l\not=k$ and $\lambda_l/\lambda_k\in\R:$

Let $\S$ be the strip  limited by two parallel lines  $\partial \Pi_{x,l} $ and $\partial \Pi_{x,k}.$
By  Proposition \ref{P:estimate(1)}  applied  to $\Omega_1:=\Pi_x$ and $\Omega_2:=\S,$ we get 
$$
P_x(\zeta,\xi)\leq P_\S(\zeta,\xi).
$$
Since  $\xi\in  \partial \H_l,$ and   $\partial \Pi_{x,l}, $ $\partial \Pi_{x,k}$  are parallel,
we see that $\dist^*_x(\xi)\geq  \dist(\partial \Pi_{x,l},\partial \Pi_{x,k}).$ On the other hand,
since  $\dist_x(\zeta)=\dist_{x,k}(\zeta)$  and the strip limited by two parallel lines $\partial \Pi_{x,l}, $ $\partial \Pi_{x,k}$
contains $\Pi_x,$  it follows that 
$$\dist_x(\zeta)=\dist_{x,k}(\zeta)\leq \dist_{x,l}(\zeta)=  \dist(\partial \Pi_{x,l},\partial \Pi_{x,k})-\dist_{x,k}(\zeta) ,$$
and hence
$$\dist_x(\zeta)\leq {1\over 2} \dist(\partial \Pi_{x,l},\partial \Pi_{x,k}) .$$
Combining  these  estimates, we  get 
   ${\dist^\star_x(\xi)\over \dist_x(\zeta)}\geq 2.$  
The desired conclusion follows from Proposition \ref{P:Widder} (2).
\endproof

The  following result  gives a precise  behavior of  $K_{s}(x,\xi)$  (introduced in  \eqref{e:K_s}) when the leaves  get close to the  hyperplanes $\{z_j=0\}.$
\begin{lemma}\label{L:K_s}
 There  is a constant $c,\gamma>1$ such that
for all $x\in\X',$  $s>1$ and $\xi\in\partial \Pi_x,$
$$
 K_{s}(x,\xi) \leq  c \min\big(1, 
 (\dist^*_x(\xi)/s)^{\gamma-1}\big).
 $$
\end{lemma}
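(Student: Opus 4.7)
My plan is to insert the pointwise upper bound for $P_x(\zeta,\xi)$ supplied by Lemma~\ref{L:Poisson-inequ} into the defining integral for $K_s$, then reduce to a one-dimensional calculation via the planar geometry near~$\xi$, and split into two regimes according to the size of $\dist^*_x(\xi)$ relative to~$s$. Substituting the bound of Lemma~\ref{L:Poisson-inequ} yields
$$K_s(x,\xi)\;\leq\;c\int_{\Pi^s_x} e^{2s-2\dist_x(\zeta)}\,\min\!\Bigl(1,\,\bigl(\dist^*_x(\xi)/\dist_x(\zeta)\bigr)^{\gamma-1}\Bigr)\,\frac{\dist_x(\zeta)}{|\zeta-\xi|^2}\,d\Leb_2(\zeta).$$
The key observation driving the whole argument is that on $\Pi^s_x$ one has $\dist_x(\zeta)\gtrsim s$, so the exponential weight $e^{2s-2\dist_x(\zeta)}$ stays bounded and decays rapidly once $\dist_x(\zeta)$ exceeds~$s$ noticeably.

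To evaluate the integral, let $l$ be the index with $\xi\in\partial \H_{x,l}$. I rotate and translate so that $\xi=0$ and $\partial \H_{x,l}$ is the real axis; writing $\zeta=u+iv$ with $v>0$, one has $\dist_{x,l}(\zeta)=v$, $|\zeta-\xi|^2=u^2+v^2$, and $\Pi^s_x\subset\{v>s\}$, with $\dist_x(\zeta)\approx v$ up to constants depending only on $\lambda_1,\ldots,\lambda_n$. Enlarging the domain of integration to the half-strip $\{v>s,\,u\in\R\}$, I split into two cases. If $\dist^*_x(\xi)\geq s$, bound $\min(\cdots)\leq 1$; after the inner integration $\int_\R du/(u^2+v^2)=\pi/v$ the factor $\dist_x(\zeta)\approx v$ cancels, leaving $\pi\int_s^\infty e^{2s-2v}\,dv=\pi/2$, hence $K_s(x,\xi)\lesssim 1$. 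If $\dist^*_x(\xi)<s$, then $\dist^*_x(\xi)<\dist_x(\zeta)$ throughout $\Pi^s_x$, so the $\min$ equals $(\dist^*_x(\xi)/\dist_x(\zeta))^{\gamma-1}$; factoring $(\dist^*_x(\xi))^{\gamma-1}$ outside and performing the $u$-integration gives $\int_s^\infty e^{2s-2v}v^{1-\gamma}\,dv\leq s^{1-\gamma}\int_s^\infty e^{2s-2v}\,dv\lesssim s^{1-\gamma}$ because $\gamma>1$, whence $K_s(x,\xi)\lesssim(\dist^*_x(\xi)/s)^{\gamma-1}$. Combining the two cases gives the stated bound.

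The main obstacle is justifying the flat-strip reduction uniformly in $x\in\X$ while preserving both $\dist_x(\zeta)\gtrsim s$ and $\dist_x(\zeta)\approx v$ with constants independent of $x$. The polygon $\Pi^s_x$ can be highly degenerate when $x$ approaches the coordinate hyperplanes or when $\xi$ is near a vertex of $\Pi_x$, and in Case~B one must ensure that enlarging $\Pi^s_x$ to the half-strip does not disturb the exponential decay of $e^{2s-2\dist_x(\zeta)}$; losing any of this decay would destroy the factor $s^{1-\gamma}$ that is essential for matching the sharp bound $(\dist^*_x(\xi)/s)^{\gamma-1}$.
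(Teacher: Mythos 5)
Your opening move (inserting the bound of Lemma~\ref{L:Poisson-inequ} into the integral defining $K_s$) and your treatment of the factor $\min\bigl(1,(\dist^\star_x(\xi)/\dist_x(\zeta))^{\gamma-1}\bigr)$ via $\dist_x(\zeta)\gtrsim s$ on $\Pi^s_x$ are both sound and match the paper. The gap is exactly the step you yourself flag as ``the main obstacle'', and it is not a matter of uniformity in $x$: the comparison $\dist_x(\zeta)\approx v=\dist_{x,l}(\zeta)$ is false pointwise. By definition, $\dist_x(\zeta)-1$ is comparable to the distance from $\zeta$ to the \emph{nearest} of the $n$ lines $\partial\H_{x,1},\dots,\partial\H_{x,n}$, not to the particular line $\partial\H_{x,l}$ containing $\xi$. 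Take $n=2$ with $\Pi_x$ a quarter-plane, $\xi$ on the horizontal edge, and $\zeta$ at height $v=100s$ but at horizontal distance $s+1$ from the vertical edge: then $\dist_x(\zeta)\approx s$ while $v=100s$, and the ratio is unbounded as $\zeta$ slides up along that edge. In such regions your substituted integrand $e^{2s-2v}\,v/(u^2+v^2)$ is exponentially \emph{smaller} than the true integrand $e^{2s-2\dist_x(\zeta)}\dist_x(\zeta)/|\zeta-\xi|^2$ (the true exponential weight is $\approx 1$ there, not $e^{-198s}$), so the half-strip computation does not bound $K_s$ from above. The strips running parallel to the other edges of $\Pi^s_x$ each contribute $O(1)$ rather than something exponentially small, and your calculation misses them entirely.

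The correct organization is to decompose $\Pi^s_x$ according to the level sets of $\dist_x$ itself, i.e.\ into the shells $\Pi^{s+j}_x\setminus\Pi^{s+j+1}_x$, $j\in\N$: on the $j$-th shell one genuinely has $\dist_x(\zeta)\approx s+j$ and $e^{2s-2\dist_x(\zeta)}\lesssim e^{-2j}$. Using Harnack's inequality to compare $P_x(\cdot,\xi)$ at nearby points of $\Pi^1_x$, the area integral over each shell is controlled by the line integral $\int_{\partial\Pi^{s+j}_x}P_x(\zeta,\xi)\,d\Leb_1(\zeta)$, and after inserting Lemma~\ref{L:Poisson-inequ} one is reduced to the key estimate
$$\int_{\zeta\in\partial\Pi^t_x}\frac{d\Leb_1(\zeta)}{|\zeta-\xi|^2}\;\leq\;c\,t^{-1},$$
which holds because $\partial\Pi^t_x$ consists of at most $n$ line segments, all of whose points lie at distance $\geq t$ from $\xi\in\partial\Pi_x$, and $\sup_\ell\int_{\zeta\in\ell,\,|\zeta-\xi|\geq t}|\zeta-\xi|^{-2}d\Leb_1(\zeta)=O(t^{-1})$ for any real line $\ell$. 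The factor $t^{-1}=(s+j)^{-1}$ cancels the factor $\dist_x(\zeta)\approx s+j$ coming from Lemma~\ref{L:Poisson-inequ}, and summing $\sum_{j\geq 0}e^{-2j}$ gives the uniform bound. This replaces your one-dimensional reduction and requires no control of the shape of $\Pi_x$ beyond convexity and the number of edges.
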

\proof
Recall from Propositions \ref{P:Poisson-classic} and \ref{P:Poisson} that  $P_x(\cdot,\xi),$ for a fixed $\xi\in \partial \Pi_x,$ is  a positive  harmonic  function on $\Pi_x.$ Therefore, 
by
  Harnack's inequality, there is a constant  $c'>0$ independent of $x\in \X'$ and $\xi\in\partial \Pi_x$ such that
$$
 P_x(\zeta,\xi) \leq  c' P_x(\zeta',\xi)\quad\text{for}\quad  \zeta,\zeta'\in \Pi^{r_1}_x,\quad |\zeta-\zeta'|\leq 1.
$$  On the other hand,  for  every $\zeta\in \Pi^s_x$ there is  exactly one $j\in \N$ such that   $\zeta\in \Pi_x^{s+j}\setminus \Pi_x^{s+j+1},$
and for $\zeta\in \partial \Pi_x^{s+j}, $ we have $ \dist_x(\zeta)=s+j$ and hence  $ e^{2s-2\dist_x(\zeta)}=e^{-2j}  .$
Moreover,  there is a constant $c>0$ independent of $x$ and $s$ such that the  following   inequality holds
$$
\int_{\Pi^s_x} f(\zeta)d\Leb_2(\zeta)\leq c \sum_{j=0}^\infty  \int_{\zeta\in \partial \Pi_x^{s+j}} f(\zeta)d\Leb_1(\zeta)
$$
for every  positive function  $f$ defined on $\Pi^s_x$  satisfying
$$
f(\zeta) \leq  c' f(\zeta')\quad\text{for}\quad  \zeta,\zeta'\in \Pi^{s}_x,\quad |\zeta-\zeta'|\leq 1.
$$
Combining  all these inequalities,
we infer from \eqref{e:K_s} that
\begin{equation*}
 K_s(x,\xi)\lesssim  \sum_{j=0}^\infty  \int_{\zeta\in \partial \Pi_x^{s+j}}e^{-2j} P_x(\zeta,\xi)d\Leb_1(\zeta).
\end{equation*}
Applying   Lemma  \ref{L:Poisson-inequ}, we deduce  from the above estimate and from  the inequality
$\dist_x(\zeta)\leq s+j$ for $\zeta\in\partial \Pi^{s+j}_x$ that 
\begin{equation*}
 K_s(x,\xi)\leq c \min\big(1, 
 \big(\dist^\star_x(\xi)/s\big)^{\gamma-1}\big)  \cdot \sum_{j=0}^\infty  \int_{\zeta\in \partial \Pi_x^{s+j }}{e^{-2j} (s+j) \over |\zeta-\xi|^2}d\Leb_1(\zeta).
\end{equation*}
To  conclude the proof of the lemma, we only need to show that  the last sum is  uniformly bounded independently of $x\in\X.$ To this end
we will show that there is a constant $c>0$ independent 
of $x\in\X$ 
and $\xi\in\partial\Pi_x$  such that  
\begin{equation}\label{e:finiteness_xi} \int_{\zeta\in \partial \Pi_x^t}{1 \over |\zeta-\xi|^2}d\Leb_1(\zeta)\leq ct^{-1}.
 \end{equation}
Indeed, taking  for granted this inequality, we  apply it  to $t=s+j$ for $j\in\N$ and sum up the results.  This will imply that the above sum  is uniformly bounded.
To prove  \eqref{e:finiteness_xi}, we observe that  the edges of $ \partial \Pi_x^t$ are parallel to  those of $\Pi_x,$ and $\Pi_x^t$ possesses at most $n$ edges. 
Moreover,  $|\zeta-\xi|\geq t$ for $\zeta\in \partial \Pi_x^t.$
So the LHS of  \eqref{e:finiteness_xi} is bounded by $$\ n\cdot \sup_\ell\int_{\zeta\in \ell:\ |\zeta-\xi|\geq t}{1 \over |\zeta-\xi|^2}d\Leb_1(\zeta),$$
the supremum being taken aver all real lines $\ell \subset \C.$ A straightforward computation shows that  the above supremum is bounded by $O(t^{-1}).$
Hence, \eqref{e:finiteness_xi} follows, and  the  proof is  complete.
\endproof

\proof[End of the proof of Theorem \ref{T:main}]
 By  Lemma \ref{L:K_s},  the  family of functions
  $(K_s)_{s>0}:\ \K\to\R^+,$  
is  uniformly  bounded.  Moreover,  $\lim_{s\to\infty} K_s(x,\xi)=0$ for $(x,\xi)\in \K.$

On the other hand, consider  the  measure $\chi$  on $\K,$ given by
$$
\int_\K \alpha d\chi= \int_{x\in\X}\big(\int_{\xi\in\partial \Pi_x}\alpha(x,\xi)  h_x(\xi)d\Leb_1(\xi)\big) d\mu(x)
$$
for every continuous bounded test function $\alpha$ on $\K.$ 
By inequality \eqref{e:lower-bound-mass-T_x} in the proof of Lemma \ref{L:Poisson-representation},
 there is   a constant $c>0$ (independent of $x$) such that 
\begin{eqnarray*}
\|T\|_{\D^n\setminus (r_0\D)^n} &=&\int_{\D^n\setminus (r_0\D)^n}T\wedge (idz_1 \wedge  d\bar z_1 +\ldots+ idz_n \wedge d\bar z_n )
\\
&\geq&   \int_{x\in\X} \big(\int_{\Pi_x\setminus \Pi^{r_1}_x}   T_x \wedge (idz_1 \wedge  d\bar z_1 +\ldots+ idz_n \wedge d\bar z_n )\big)  d\mu(x)\\
&\geq&  c  \int_{x\in\X}\big(\int_{\Pi_x\setminus \Pi^{r_1}_x}h_x(\zeta)d\Leb_2(\zeta)\big) d\mu(x)\\
&\geq &c'c\int_{x\in\X}\big(\int_{\xi\in\partial \Pi_x}  h_x(\xi)d\Leb_1(\xi)\big) d\mu(x),
\end{eqnarray*}
where the last inequality holds for a constant $c'>0$ (independent of $x$) by an application of   Harnack's inequality  for positive harmonic  functions.
So $\chi$ is a  finite positive measure.
 
 Consequently,  we get, by  dominated convergence, that 
$
\lim_{s\to\infty}\int_\K K_sd\chi=0. 
$ 
 This, combined  with  Lemma \ref{L:Lelong-representation}, implies that
$$
0\leq \lim_{r\to 0+}  f(r)\leq n\cdot \lim_{s\to\infty}\int_\K K_sd\chi=  0,
$$
which,  coupled  with \eqref{e:F}-\eqref{e:f} and \eqref{e:Lelong_bisbis}, gives that
 $\nu ( T,0)=0,$ as  desired.
\endproof

\proof[Proof of Theorem  \ref{T:main_bis}]
 Using Remark
  \ref{R:stronger-version-f} and arguing as in the  proof of Theorem \ref{T:main} we  see that  $\hat F(r)<\infty$ for $0<r<1.$
Hence, the mass of $T$ on  $(r\D)^n$ is  finite  for all $r\in[0,1].$  This  proves assertion (1).

To prove  assertion (2) pick a  point $x\in\mathcal Z.$ There are two cases.

{\noindent \bf Case 1: $x=0.$} The proof of Theorem  \ref{T:main} also  work in this context and we get that $\nu(T,0)=0.$

{\noindent  \bf Case 2: $x\not=0.$}

Let $\U_x$ be a regular flow box of $\Fc$ which contains $x$ and which is away from $\{0\},$
Let $\T$ be a transversal $\U_x$ which   contains $x.$ 
By shrinking $\U_x$ if necessary, we may assume without loss of generality that $\T$ is a complex  manifold of dimension $n-1.$
Let $\widecheck \T\subset \T$ be the transversal of  $\widecheck \Fc$ on $\U_x.$   
Since $x\in \mathcal Z,$ it follows that $x\not\in\widecheck\T.$
By  Proposition \ref{P:decomposition}   we can write in $\U_x$ 
$$
T=\int_{t\in \widecheck \T} h_t[\B_t] d\nu(t),
$$
where, $\nu$ is a  positive Borel measure on $\widecheck \T,$ and  for $\nu$-almost  every $t\in \widecheck\T,$   $h_t$ denote the positive harmonic function associated to the current $T$ on the plaque  $\B_t. $
We  may assume  without loss of generality that $h_t(t)=1.$
By Harnack's inequality, there is a constant $c>0$ independent of $t$ such that
 $$  c^{-1}h_t(z)\leq  h_t(w)\leq    ch_t(z),\qquad  z,w\in \B_t.$$
 In particular, we get that  $h_t(z)\approx 1$ for $z\in \B_t$ as  $h_t(t)=1.$
 Using this and  the above  local description of $T$ on $\U_x$ and formulas \eqref{e:Lelong}--\eqref{e:Lelong_bis}--\eqref{e:Lelong_bisbis}, we  infer easily that $\nu(T,x)\leq c  \nu(\{x\}),$ where
 \begin{equation}\label{e:nu_x}
 \nu(\{x\}):=\lim_{\epsilon\to 0}  \nu(\{y\in\widecheck\T:\  \dist(y,x)<\epsilon\}),
 \end{equation}
 and $\dist(\cdot,\cdot)$ is a  distance induced  by a fixed Hermitian metric on the complex manifold $\T.$ 
 Therefore,  if one can show that $\nu(\{x\})=0,$ then it follows from the last inequality that   $\nu(T,x)=0,$ and we are done.
 So it  remains to prove that $\nu(\{x\})=0.$ 
 
 By assertion (1), the mass of $T$ on $\U_x$ is  finite.  This, combined with  the above description of $T$ on $\U_x$ 
 and the  above estimate  $h_t(z)\approx 1$ for $z\in \B_t,$ implies that  $\nu$ is a finite measure. 
Therefore, we deduce from  the equality $\bigcap_{j=1}^\infty \{y\in\widecheck\T:\  \dist(y,x)<j^{-1}\}=\varnothing$ 
and from \eqref{e:nu_x} that
 $\nu(\{x\})=0$ as  desired.
\endproof

\section{Proof of the global  result and concluding remarks} \label{S:Other-results}

First we recall  two results of  Forn{\ae}ss--Sibony
\cite{FornaessSibony05,FornaessSibony10}.
The proof given   here makes an emphasis on the  generalization of these results in  the higher dimension $n\geq 2.$
These results will be  needed in the  proof of Theorem \ref{T:main_global}.

In the  following  proposition  we are  concerned  with  directed positive harmonic  currents $T$  of  the  form
\begin{equation}\label{e:atomic-current} 
T=h [L_a] 
\end{equation}
 where $a$ is a point in $  X\setminus E$ and $h$ is a positive  harmonic function
on the leaf $L_a.$  
Let $f$ be the lifting of the harmonic function to the unit disc $\D$, that is, 
\begin{equation}\label{e:lifting_h} f = h \circ\varphi\qquad\text{on}\qquad \D,
\end{equation}
where $\varphi : \D \to L$ is a universal covering map. 

\begin{proposition}\label{P:FS}{\rm  (Forn{\ae}ss--Sibony \cite{FornaessSibony05,FornaessSibony10}) }
Let $\Fc=(X,\Fc,E)$    be  
a     singular  holomorphic  foliation   with      the set of  singularities $E$ in a    compact complex manifold  $X.$ 
Assume that
\begin{enumerate}
\item $E$  is a  finite set;
 \item   there is  no invariant analytic  curve;
\item  there is   no  non-constant holomorphic map $\C\to X$ such that
out of $E$   the image of $\C$ is locally contained in a leaf.
\end{enumerate}
Let $T$ be  a positive  harmonic  current  directed by $\Fc$ which has the form  \eqref{e:atomic-current}.
Then:
\begin{enumerate} 
\item [(i)]For every  neighborhood $U$ of $E,$  there is a constant $c_U>0$ such that  $h(x)\leq c_U$ for $x\in L_a\setminus U;$
 \item [(ii)] The function $f$ given in \eqref{e:lifting_h} has  nontangential limits $0$ $\Leb_1$-almost everywhere  on $\partial \D.$
\end{enumerate}

 \end{proposition}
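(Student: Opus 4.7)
The plan is to establish (i) by a compactness and normalization argument, and (ii) by combining the classical Fatou theorem for positive harmonic functions on $\D$ with a dynamical description of the boundary behaviour of the covering map $\varphi$.

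For assertion (i), I would argue by contradiction. Suppose $h$ is unbounded on $L_a \setminus U$ for some neighborhood $U$ of $E$, so there exists a sequence $x_n \in L_a \setminus U$ with $h(x_n) \to \infty$. Consider the renormalized currents $T_n := h(x_n)^{-1} T$: each is of the form \eqref{e:atomic-current} with harmonic density $h/h(x_n)$ on $L_a$ taking value $1$ at $x_n$. Harnack's inequality, applied on leaves inside flow boxes of $\Fc$, gives uniform upper bounds for $h/h(x_n)$ on plaques through $x_n$ and, by chaining plaques, on any fixed compact subset of $L_a$; combined with the mass estimate of Theorem \ref{T:extension_2} on $X \setminus E$, the masses $\|T_n\|_K$ stay bounded for each compact $K \Subset X \setminus E$. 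Extract a weak subsequential limit $T_\infty$ and, after a further subsequence, $x_n \to x_\infty \notin E$; the Harnack normalization guarantees $T_\infty \neq 0$ near $x_\infty$. The limit $T_\infty$ is again a directed positive harmonic current on $X \setminus E$ whose support lies in the $\Fc$-invariant closed set $\overline{L_a}$. Invoking the structure theorems of Forn{\ae}ss--Sibony \cite{FornaessSibony05,FornaessSibony10}, under hypotheses (2) and (3) such a limit must correspond either to an $\Fc$-invariant analytic curve (ruled out by (2)) or to a parabolic entire leaf (ruled out by (3)), producing the contradiction.

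For assertion (ii), the lift $f = h \circ \varphi$ is a positive harmonic function on $\D$, hence admits a Herglotz representation $f(\zeta) = \int_{\partial\D} P_\D(\zeta, e^{i\theta}) d\mu(\theta)$ for a unique finite positive Borel measure $\mu$ on $\partial\D$. By Fatou's theorem, the nontangential limit $f^*(\theta)$ exists and equals $d\mu_{\mathrm{ac}}/d\theta$ Lebesgue-a.e., so showing $f^* = 0$ a.e.\ is equivalent to showing that $\mu$ is singular with respect to Lebesgue measure. The crucial dynamical input is: for Lebesgue-a.e.\ $\theta$, the nontangential cluster set of $\varphi$ at $e^{i\theta}$ is contained in $E$. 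If on a set of positive $\theta$-measure this cluster set contained a point $y \notin E$, then a Brody-type rescaling of $\varphi$ along a sequence $\zeta_n \to e^{i\theta}$ would produce either a nonconstant holomorphic map $\C \to X$ whose image is locally contained in a leaf (ruled out by (3)) or an $\Fc$-invariant analytic curve through $y$ (ruled out by (2)). Granting this, combine it with assertion (i) via a martingale argument: Brownian motion $B_t$ on $\D$ exits at a uniformly distributed $\theta_\tau \in \partial\D$, $\varphi(B_t)$ is Brownian motion on $L_a$ converging almost surely to $E$, and the positive bounded martingale $f(B_t) = h(\varphi(B_t))$ converges to $f^*(\theta_\tau)$; since $T$ places no mass on $E$ (Definition \ref{D:Directed_hamonic_currents_with_sing}), a standard absorption argument forces $f^*(\theta_\tau) = 0$ almost surely, i.e.\ $f^* = 0$ a.e.\ on $\partial\D$.

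The main obstacle is the dynamical cluster-set identification in (ii): showing that nontangential cluster sets of $\varphi$ lie in $E$ for Lebesgue-a.e.\ $\theta$. This is where the geometric hypotheses (2) and (3) are consumed in their full strength through the Brody-type rescaling on the universal cover. Once this dynamical step is in hand, both the contradiction in (i) and the vanishing of $f^*$ in (ii) follow from relatively standard compactness, Harnack, and martingale arguments.
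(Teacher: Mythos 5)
Your argument for (i) places the contradiction in the wrong step. After normalizing $T_n := h(x_n)^{-1}T$ you have $\|T_n\|_K = h(x_n)^{-1}\|T\|_K \to 0$ for every compact $K \Subset X\setminus E$, while Harnack's inequality on the plaque through $x_n$ forces the mass of $T_n$ on a fixed neighborhood of $x_\infty$ to be bounded below; that tension is already the contradiction, and it is exactly the paper's (much shorter) argument: the finite mass of $T$ is incompatible with $h$ being bounded below on infinitely many plaques of a single flow box away from $E$. Instead, you pass to a weak limit $T_\infty$ and claim that hypotheses (2)--(3) forbid a nonzero directed positive harmonic current supported in $\overline{L_a}$. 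They do not: foliations satisfying (1)--(3) carry directed positive harmonic currents (this is what the whole paper studies); what (2)--(3) exclude, via Brunella's theorem quoted in the proof of Corollary \ref{C:main_2}, is a directed positive \emph{closed} current. So the step ``invoking the structure theorems \dots\ such a limit must correspond either to an invariant analytic curve or to a parabolic entire leaf'' is false, and as written (i) is not proved.

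For (ii) there are two genuine gaps. First, your dynamical input --- that for Lebesgue-a.e.\ $\theta$ the nontangential cluster set of $\varphi$ at $e^{i\theta}$ lies in $E$ --- is not delivered by a Brody rescaling (hypothesis (3) says precisely that no nonconstant rescaling limit $\C\to X$ exists, which yields no information about cluster sets) and is false in general: the radial images under a universal covering map of a dense leaf typically accumulate on a whole minimal set, not on $E$. The paper only needs, and only proves, convergence $\varphi(re^{i\theta})\to a_0\in E$ for $\theta$ in the set $S_0$ where $f$ has a \emph{strictly positive} nontangential limit, and that proof again uses the finite-mass/plaque argument from (i) (on such a radius $h$ stays bounded below, so only finitely many plaques per flow box can be visited), not a rescaling. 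Second, your concluding step --- ``since $T$ places no mass on $E$, a standard absorption argument forces $f^*(\theta_\tau)=0$'' --- does not work: $\|T\|_E=0$ is automatic because $L_a\cap E=\varnothing$ and gives no control on boundary values of $f$; assertion (i) bounds $h$ only \emph{away} from $E$, and $h$ may blow up at $E$ (cf.\ the example $h=-\log|z_2|$ in Subsection \ref{SS:Lelong-harmonic-currents}). The paper's mechanism is entirely different: once $\varphi(re^{i\theta})\to a_0$ for all $\theta$ in a set of positive measure, one combines Lehto--Virtanen normality (via the Brody estimate \eqref{e:Brody-consequence}), a Privalov-type construction of a Jordan subdomain, Lindel\"of's theorem, and Privalov's uniqueness theorem to conclude $\varphi\equiv a_0$, contradicting $\varphi(\D)=L_a$. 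That uniqueness argument applied to the holomorphic map $\varphi$ itself, rather than a martingale absorption for $h$, is the essential idea missing from your proposal.
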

 
\begin{remark}
 \rm The condition (3)  in Theorem  \ref{T:main_global}  is equivalent  to the Brody hyperbolicty of $\Fc$  in the sense of \cite{DinhNguyenSibony14}, that is,
 there is a constant $c>0$ such that  for every holomorphic map $\varphi$ from $\D$  to a  leaf, 
 \begin{equation}\label{e:Brody}
 |\varphi'(\zeta)|<{c\over  1-|\zeta|}\qquad\text{for}\qquad \zeta\in\D.
\end{equation}
 It is  my mistake  to   omit  this condition in my  previous  work \cite[Theorems  1.1 and 1.3]{NguyenVietAnh18a}.
\end{remark}
\proof 
Let $\U$ be  a flow box.
By Harnack's inequality, there is a constant $c>0$ which depends only on $\U$  such that
 $$  c^{-1}h(z)\leq  h(w)\leq    ch(z),\qquad  z,w\in P,$$
 where $P$ is a plaque of $\U$  which  is  contained  in  $L_a.$
 Since the mass of $T$ on $\U$ is   finite, we infer from  the  above inequality and   \eqref{e:atomic-current} that
if the leaf $L_a$ intersect $\U$   infinitely many times   in pairwise different plaques $P_j$  with $j\in \N,$  then the harmonic functions
$h_j := h|_{P_j}$ must go uniformly to zero as $j\to\infty.$
 Hence,  assertion (i) follows. 
   
   To  prove assertion (ii), consider  the set $S$ consisting of all $\zeta\in \partial \D$ of
  such that  $f$ has nontangential limits $f (\zeta )$   at $\zeta.$
 Since $h>0$ on $L_a,$ it follows that  the harmonic function  $f$ is  positive on $\D.$
 Hence $S$ is of full $\Leb_1$-measure on $\partial \D.$
    Consider  
$$S_0:= \left\{\zeta\in  S:\ f (\zeta) > 0\right\}.$$ 
Suppose in order to reach  a  contradiction that $\Leb_1(S_0)>0.$ 
We consider the curve $\{\varphi(re^{i\theta_0} ):\ r\in[0,1)\}.$ By the
above argument, it follows that this curve can only intersect finitely many plaques in
any flow box. 

Suppose that  some plaque $P$ is visited by this  curve infinitely
many times as $r \to  1.$ Note that $h$ must be constant on $P,$ hence constant
on the leaf $L_a,$ hence  $T$  is  a positive closed  current whose Lelong  numbers  at all points of  $L_a$ are $\geq 1.$
By Siu's theorem \cite{Siu},  $L_a$ is  contained  in a   proper analytic  set of $X.$  Since $X$ is  compact, 
$L_a$ is an analytic subset of $X\setminus E.$  As  $E$ is a finite set and $\dim_\C(X)\geq 2,$
we deduce from
 Remmert--Stein  theorem that
$L_a$ is  an invariant analytic  curve in $X.$ This  contradicts  assumption (2).

Consequently, we infer from the two previous paragraphs that the curve $\varphi(re^{i\theta_0} )$ converges as $r\nearrow 1$  to a singular point.
Since $E$ is a  finite set,  there are a  set $S_1\subset S_0$  with $\Leb_1(S_1)>0$  and a point $a_0\in E$ such that  
$\varphi(re^{i\theta} )\to  a_0$  as $r\nearrow 1$ for  all $\theta\in S_1.$

By assumption (3), $\Fc$ is  Brody  hyperbolic. So by \eqref{e:Brody}, there is a constant $c>0$ such that
\begin{equation}\label{e:Brody-consequence}
 |\varphi'(\zeta)|<{c\over  1-|\zeta|}\qquad\text{for}\qquad \zeta\in\D.
\end{equation}
Since  $X$ is  compact,  we infer from a theorem of  Lehto--Virtanen \cite{LehtoVirtanen} that   $\varphi$ is  bounded  in any angle with  vertex $\zeta$ for every  $\zeta\in S_1.$ 
Using the argument  in Privalov  \cite{CollingwoodLohwater}, we  can construct a subset  $S_2\subset S_1$   with $\Leb_1(S_2)>0$  and a Jordan subdomain $G\subset \D$  with rectifiable boundary  
such that  $ S_2\subset \partial G,$
and that $G$  contains an angle  with vertex at  $\zeta$  for all $\zeta\in S_2,$  and  that $\varphi(G)$ is contained in a local chart around $a_0.$
By Lindel\"of's theorem, $\varphi$ has nontangential limits $a_0$ on  $S_2.$
By Privalov's  theorem applied  to $\varphi|_G,$  we see that $\varphi\equiv a_0.$ Hence, $\varphi\equiv a_0$ on $\D,$ which  contradicts  $\varphi(\D)=L_a.$
So  $\Leb_1(S_1)=0,$  and hence  $\Leb_1(S_0)=0.$  This proves assertion (ii).

\endproof

\endproof

\begin{theorem}\label{T:FS} {\rm  (Forn{\ae}ss--Sibony \cite[Corollary 2]{FornaessSibony10}) }Let $\Fc=(X,\Lc,E)$ be a  singular holomorphic foliation  as in  Proposition \ref{P:FS}.
Assume in addition that  $\dim X=2.$
Then every  positive  harmonic current directed by $\Fc$ is  diffuse. 
\end{theorem}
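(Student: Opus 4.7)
The plan is to argue by contradiction. Suppose that some positive harmonic current $T$ directed by $\Fc$ is not diffuse, so by Definition \ref{D:diffuse}, in some flow box $\U \simeq \B \times \T$ the transversal measure $\nu$ of Proposition \ref{P:decomposition} has an atom at a point $t_0 \in \T$. The atomic contribution produces a piece of $T|_\U$ of the form $h_{t_0}[P_{t_0}]$, where $P_{t_0}$ is the plaque $\{t=t_0\}$ and $h_{t_0}$ is a positive harmonic function on it. Using the uniqueness clause of Proposition \ref{P:decomposition} to glue along overlapping flow boxes covering the leaf $L_a$ through any $a \in P_{t_0}$, the local atomic part extends to a positive harmonic current $T_0 \leq T$ on $X \setminus E$ of the form $T_0 = h[L_a]$, with $h > 0$ harmonic on $L_a$ and $h|_{P_{t_0}} = h_{t_0}$. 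Replacing $T$ by $T_0$, I may assume $T = h[L_a]$.

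Next, apply Proposition \ref{P:FS} to this $T$; all three of its hypotheses are in force. This gives: (i) $h$ is bounded on $L_a \setminus U$ for every neighborhood $U$ of the finite set $E$, and (ii) the lift $f := h \circ \varphi$ has nontangential limit $0$ at Lebesgue-almost every point of $\partial \D$, where $\varphi : \D \to L_a$ is a universal covering (available because Brody hyperbolicity makes $L_a$ hyperbolic). Since $f > 0$ is harmonic on $\D$, the Herglotz representation writes $f$ as the Poisson integral of a unique positive finite Borel measure $\mu$ on $\partial \D$, and Fatou's theorem together with (ii) forces the absolutely continuous part of $\mu$ to vanish, so $\mu$ is singular with respect to Lebesgue measure on $\partial \D$. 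If $\mu = 0$ then $f \equiv 0$, contradicting $h > 0$, so it suffices to derive $\mu = 0$.

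To rule out a nontrivial singular $\mu$, the strategy continues the analysis in the proof of Proposition \ref{P:FS}(ii). Since $\mu$ is the singular part of the boundary measure of $f$, standard Fatou theory gives that $f(re^{i\theta}) \to \infty$ along the radius for $\mu$-almost every $\theta \in \partial \D$. Combined with (i), the radial curve $r \mapsto \varphi(re^{i\theta})$ must then enter every neighborhood of $E$ infinitely often, and cannot meet any flow box $\U \Subset X \setminus E$ in infinitely many distinct plaques (else Harnack on those plaques would keep $h$ bounded away from the singular mass). Since $E$ is finite, the curve must converge to some point $e(\theta) \in E$. The Brody bound $|\varphi'(\zeta)| \leq c/(1-|\zeta|)$ together with the Lehto–Virtanen and Lindelöf–Privalov cluster arguments already used in the proof of Proposition \ref{P:FS}(ii) upgrades this radial convergence to nontangential convergence, so that $\theta \mapsto e(\theta)$ is a measurable map $\supp(\mu) \to E$.

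The main obstacle is the final step, which is where $\dim X = 2$ is used essentially: for each $e \in E$, the set $S_e := \{\theta :\ e(\theta) = e\}$ must satisfy $\mu(S_e) = 0$. Indeed, if $\mu(S_e) > 0$, one works in a local chart at $e$ and applies Privalov's uniqueness theorem to the bounded holomorphic map $\varphi$ restricted to a Jordan subdomain of $\D$ abutting $S_e$ (as constructed in the proof of Proposition \ref{P:FS}(ii)): the nontangential limit $e$ on a positive-measure boundary set would force $\varphi$ to parametrize a local invariant analytic curve through $e$, which by analytic continuation along the leaf would extend to a global invariant analytic curve in $X$, contradicting hypothesis (2). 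Summing over the finite set $E$ yields $\mu(\partial\D) = 0$, the required contradiction. The delicate part is this Privalov–separatrix dichotomy: two-dimensionality is what allows a local separatrix at $e$ to be continued as an analytic curve, and it is not clear how the analogous argument would proceed in higher dimensions without an additional hyperbolicity hypothesis on the singularity.
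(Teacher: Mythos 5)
Your argument tracks the paper's proof up to the point where $f=h\circ\varphi$ is written as the Poisson integral of a measure $\mu$ on $\partial\D$ that is singular with respect to Lebesgue measure (the paper gets this from Proposition \ref{P:FS}(ii) together with the Herglotz/Fatou theory, exactly as you do). After that you diverge, and your final step has a genuine gap. You propose to kill the singular part by applying Privalov's uniqueness theorem on the set $S_e=\{\theta:\varphi(re^{i\theta})\to e\}$, which carries positive $\mu$-mass. But $\mu$ is singular, and the set where $f(re^{i\theta})\to\infty$ radially (which is where your $S_e$ lives) has \emph{Lebesgue} measure zero. Privalov's uniqueness theorem requires a boundary set of positive Lebesgue measure (equivalently, positive harmonic measure for the rectifiable Jordan subdomain $G$, by the F.~and M.~Riesz theorem); on a Lebesgue-null set it gives nothing --- there exist non-constant bounded holomorphic functions with prescribed radial limits on any given null set. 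This is precisely why the paper's proof of Proposition \ref{P:FS}(ii) works with the set $S_0$ \emph{assumed} to have positive Lebesgue measure; that argument cannot be recycled for the $\mu$-positive, Lebesgue-null set you need here. A secondary issue: Privalov's theorem, where it applies, forces $\varphi\equiv e$ (a constant), not ``$\varphi$ parametrizes a local invariant analytic curve,'' and the subsequent continuation to a global invariant curve is asserted without justification.

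The paper closes the argument quite differently, and this is where $\dim X=2$ actually enters. From the nontriviality of the singular measure $\mu$ one deduces that $f$ is \emph{unbounded} (a bounded positive harmonic function would have an absolutely continuous boundary measure). On the other hand, near each singular point one works in a linearizing chart with the phase spaces $\Pi_x$ and invokes the Poisson-integral representation $h(\zeta)=\int_{\partial\Pi_x}P_x(\zeta,\xi)h(\xi)\,d\Leb_1(\xi)$ with \emph{no} extra term $c_x\Gamma_x$ (Lemma \ref{L:Poisson-representation}, in dimension $2$ due to Forn{\ae}ss--Sibony, cf.\ Remark \ref{R:dim2}). Since Proposition \ref{P:FS}(i) bounds $h$ uniformly on $\partial\Pi_x\subset\varphi_x^{-1}(\partial\D^n)$, away from the singularities, the representation bounds $h$ on all of $\Pi_x$, hence near every singular point, hence on all of $L_a$. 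This contradicts the unboundedness of $f$. If you want to complete your proof you should replace the Privalov step by this boundedness argument (or find another way to exclude a nontrivial singular boundary measure that does not rely on a positive-Lebesgue-measure hypothesis).
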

\proof
Assume in order  to get  a  contradiction that  there is a   positive  harmonic current $T$ directed by $\Fc$   which is not diffuse. So $T$  has an atomic part, i.e. a Dirac mass at   a point
$a\in X\setminus E.$ The restriction $T$ to the leaf $L_a$ is a non-zero positive harmonic current. We can
normalize so that the transverse measure  is the  Dirac mass at $a.$ Then we have a positive harmonic
function $h$ defined on $L_a.$

By Proposition  \ref{P:FS} (ii) (see e.g. \cite[Corollaries 6.15 and  6.44]{ABR}), there is positive Borel  measure $\nu$ on $\partial  \D$   with  support $S$  such that  $\Leb_1(S)=0$  and 
$$
f(\zeta)=\int_{\partial \D} P_\D(\zeta,\xi)d\nu(\xi),
$$
where $P_D$ is  the Poisson kernel of  $\D.$
The function $f$ should be
 unbounded, since  otherwise  $\nu$  would have a  bounded density with respect to $d\Leb_1,$  which  would  contradict  that $\supp(\nu)=S$ and $\Leb_1(S)=0.$

On the other hand, let $a_0\in E$ be a singular point.  Fix  a   local holomorphic coordinates on an open neighborhood $\U$ of  $a_0$ on which 
$\Fc$ is  identified with a  local model   $(\D^n,\Lc, 0)$ of the form  \eqref{e:local-model}.
Let $\X$  be  given by  Lemma  \ref{L:transversal}.  By Proposition \ref{P:decomposition} (2),  for  every $x\in \X,$ there is a constant $c_x>0$ such that  $h=c_xh_x$ on  $L_x.$
Consequently, by  Remark  \ref{R:dim2} (that is, by Lemma \ref{L:Poisson-representation}  for $n=2$),  we have 
$$
h(\zeta)=\int_{\partial \Pi_x} P_x(\zeta,\xi)h(\xi)d\Leb_1(\xi).
$$
On the other hand, by Proposition  \ref{P:FS} (i) and by the inclusion $\X\subset\partial\D^n,$
$h$ is uniformly bounded    on  $\partial\Pi_x$ independently of $x\in \X.$
So  by the above integral  representation,  $h$ is also  
uniformly bounded on $\Pi_x$. Hence, $h$ must be uniformly bounded on a neighborhood of each  singular point $a_0\in E.$
This, combined with  Proposition  \ref{P:FS} (i), implies that $h$  must be uniformly bounded on  $L_a.$
This contradicts  the unboundedness of $f.$

Hence, $T$ is  diffuse.

\endproof

\proof[End of the proof of Theorem  \ref{T:main_global}]
To prove  that $T$ is  diffuse, we argue as in the proof of Theorem \ref{T:FS}  using  Lemma \ref{L:Poisson-representation} for all $n\geq 2.$

Now  we prove that  $\nu (T,x)=0$ for all $x\in X.$ Let $x\in X.$ 
Consider two cases.
\\
\noindent  {\bf Case 1:} $x\not\in E.$

Let $\U\simeq \B\times\T$ be a regular flow box with transversal $\T$ which   contains $x.$
By Proposition \ref{P:decomposition} (1), 
we can write in $\U$ 
$$
T=\int h_t[\B_t] d\mu(t),
$$
where $\mu$  is a positive Radon measure  on $\T,$ and for $\mu$-almost  every $t\in \T,$   $h_t$  is a positive harmonic function 
on the plaque  $\B_t\simeq \B\times\{t\}. $ By Harnack's inequality, there is a constant $c>0$ independent of $t$ such that
 $$  c^{-1}h_t(z)\leq  h_t(w)\leq    ch_t(z),\qquad  z,w\in \B_t.$$
 Using this and  the above  local description of $T$ on $\U$ and formula (\ref{e:Lelong_bisbis}), we  infer easily a constant
 $c>0$  depending only on $\U$  such that $\nu(T,x)\leq c  \mu(\{x\}).$
 On the  other hand, since  we have shown that $T$ is  diffuse, $\mu(\{x\})=0.$ Hence,   $\nu(T,x)=0.$
 
\noindent  {\bf Case 2:} $x\in E.$

  Fix a (local) holomorphic coordinates system of $X$ on a singular  flow box $\U_x$ of $x$  such that
$(\U_x,x)$ is identified with  $(\D^n,0)$ and 
 the
leaves of $\Fc$  on this  box  are integral curves of the  linear vector field
$\Phi$ given by  \eqref{e:local-model}.
Consider 
$$
J:=\left\lbrace  j:\  1\leq j\leq n \quad\text{and} \quad T \quad \text{gives mass to the invariant hyperplane}\quad \{z_j=0\}  \right\rbrace.
$$
If $J=\varnothing,$
then we  are able to apply   Theorem \ref{T:main}  which gives    $\nu(T,x)=0$ as  desired.

Consider the case $J\not=\varnothing.$ Let $T_j$ be the restriction of $T$ on the invariant hyperplane $\{z_j=0\}.$
This is a directed positive  harmonic current. 
Consider $T':=T-\sum_{j\in J} T_j.$ So $T'$ is  also a directed positive harmonic current giving no mass to any coordinate invariant hyperplane $\{z_j=0\},$ and  we have by \eqref{e:Lelong_bis}--\eqref{e:Lelong_bisbis},
$$
\nu(T,x)\leq \nu(T',x)+ \sum_{j\in J}\nu( T_j,x)
.$$
By Theorem \ref{T:main},  $\nu(T',0)=0.$
Therefore,   in order to prove that   $\nu(T,x)=0,$ we only need to show that     $\nu(T_j,0)=0$ for $j\in J.$
Observe that since $0\in\C^n$ is a  hyperbolic singulatity, the restriction  of  $\Fc$ on $\{z_j=0\}$  admits $0\in\C^{n-1}$ as  a hyperbolic  singularity.
We  can argue as above by going down in one dimension by restricting  $\Fc$ and $T$  to the  invariant hyperplane
$\{z_j=0\}.$  We repeat this procedure. It should   stop  after a finite  steps.
Otherwise,  we  would go to a  plane  $H$ of   dimension $n=2.$   Then, the two invariant hyperplanes (i.e. two  separatrices in this context)   of $\Fc|_H$ are   two leaves of $\Fc,$   each  one of these  leaves is of the form
$$ \left\lbrace z=(z_1,\ldots,z_n)\in\D^n:\ z_l=0\qquad\text{for all}\qquad  l\not=j  \right\rbrace \quad\text{for some}\quad 1\leq j\leq n.   $$
$T|_H$  cannot give mass to none  of them, otherwise $T$ would give mass to a leaf, which in turn implies that this  leaf is an invariant  analytic curve,  which is  impossible by assumption (1).
 Consequently, we  are able to apply   Theorem \ref{T:main} to get that     $\nu(T|_H,0)=0.$ This completes the proof.  
\endproof

\proof[Proof of Corollary \ref{C:main_2}]
By Brunella  \cite{Brunella},  if
   all  the singularities of a  foliation $\Fc\in \Fc_d(\P^k)$ are    hyperbolic and  $\Fc$ does not possess any invariant algebraic  curve, 
   then $\Fc$ admits no  nontrivial  directed positive closed current. In particular, assumption (3) of Theorem \ref{T:main_global}  is  fulfilled. Clearly,
    all two other  assumptions  of this theorem are also fulfilled.
   This  theorem  implies the  corollary.

Let $\Fc_d (\P^n)$ be  the space of all singular holomorphic folitions of degree $d$ in $\P^n.$  
By Jouanolou \cite{Jouanolou} and Lins Neto-Soares \cite{NetoSoares}, there is a real Zariski dense open set $\mathcal H(d)\subset  \Fc_d(\P^k)$ such that for every $\Fc\in \mathcal H(d),$
  all  the singularities of $\Fc$ are    hyperbolic and  $\Fc$ does not possess any invariant algebraic  curve. So  a generic foliation in $\Fc_d (\P^n)$
  satisfies  the  assumptions of Corollary \ref{C:main_2}.
\endproof

We conclude the article with a remark and an open question.
\begin{remark}\rm
By Dinh--Wu  \cite[Theorem 1.1]{DinhWu}, our main result (Theorem  \ref{T:main}) is  essentially sharp.

When the singularities  are  linearizable but not    weakly hyperbolic, the study  of Lelong numbers seems  difficult.    Chen's recent article \cite{Chen} gives a partial result in this  direction for  dimension $n=2.$
It seems  to be interesting to find  sufficient conditions  on the nature of the singularity  $\{0\}$  to ensure that  the Lelong number  of $T$ at the origin  is  zero.
\end{remark}


\small


\begin{thebibliography}{99}
 
\bibitem{AlessandriniBassanelli93}
Alessandrini, Lucia; Bassanelli, Giovanni:
Plurisubharmonic currents and their extension across analytic subsets. 
{\it Forum Math.} {\bf 5} (1993), no. 6, 577-602. 


\bibitem{AlessandriniBassanelli96}  Alessandrini, Lucia; Bassanelli, Giovanni: Lelong numbers of positive plurisubharmonic currents. {\it  Results Math.}  {\bf 30} (1996), no. 3-4, 191--224.


  \bibitem{ABR}
   Axler, Sheldon; Bourdon, Paul; Ramey, Wade: Harmonic function theory. Second edition. Graduate Texts in Mathematics, {\bf 137}. {\it Springer-Verlag, New York,} 2001. xii+259 pp. 
   
\bibitem{BerndtssonSibony}  
Berndtsson, Bo; Sibony, Nessim: The $\overline\partial$-equation on a positive current.
{\it Invent. Math.} {\bf 147} (2002), no. 2, 371--428. 
 

 \bibitem{Brunella}
 Brunella, Marco:  Inexistence of invariant measures for generic rational differential equations in the complex domain. {\it Bol. Soc. Mat. Mexicana (3)} {\bf 12} (2006), no. 1, 43--49.

 
 
 
\bibitem{BurnsSibony} Burns, Daniel; Sibony, Nessim:  Limit currents and value distribution of holomorphic maps. {\it Ann. Inst. Fourier (Grenoble)} {\bf 62} (2012), no. 1, 145--176. 
    
 \bibitem{Demailly}  Demailly, Jean-Pierre:  Complex  Analytic and Differential  Geometry. Manuscript available at 
{\tt  http://www-fourier.ujf-grenoble.fr/$\sim$demailly/books.html,} (2012).
 
  \bibitem{Chen} Chen, Zhangchi: Directed harmonic currents near non-hyperbolic
linearizable singularities.   {\it  Ergodic Theory Dynam. Systems}  (to appear).
  
 \bibitem{CollingwoodLohwater}
  Collingwood, E. F.; Lohwater, A. J.: The theory of cluster sets. Cambridge Tracts in Mathematics and Mathematical Physics, No. {\bf 56}  {\it Cambridge University Press, Cambridge} 1966 xi+211 pp.
 

\bibitem{DinhNguyenSibony12}
Dinh, Tien-Cuong; Nguy\^en, Vi\^et-Anh; Sibony, Nessim:  Heat equation and ergodic theorems for Riemann surface laminations.
{\it Math. Ann.} {\bf 354} (2012), no. 1,   331--376.

 \bibitem{DinhNguyenSibony14}
 Dinh, Tien-Cuong; Nguy\^en, Vi\^et-Anh; Sibony, Nessim:   Entropy for hyperbolic Riemann surface laminations II.
{\it Frontiers in Complex Dynamics: a volume in honor of John Milnor's 80th birthday,} (A. Bonifant, M. Lyubich, S. Sutherland, editors), 
593-622, (2014), Princeton University Press.
 
 
 \bibitem{DinhNguyenSibony18}
 Dinh, Tien-Cuong; Nguy\^en, Vi\^et-Anh; Sibony, Nessim: Unique Ergodicity for foliations on compact K\"ahler surfaces, (2018),
 {\it Duke Math. J.}  (to appear).
 
 
\bibitem{DinhSibony18}
 Dinh, Tien-Cuong; Sibony, Nessim:  Unique ergodicity for foliations in $\P^2$
with an invariant curve. {\it Invent. Math.} {\bf  211} (2018), no. 1,  1--38. 
 
     
  \bibitem{DinhSibony20} Dinh, Tien-Cuong; Sibony, Nessim:  Some Open Problems on Holomorphic Foliation Theory.  Special
volume in memory of Prof. Le Van Thiem. {\it Acta Math. Vietnam.,}  {\bf 45} (2020), no. 1,
  103--112. 
  
  \bibitem{DinhWu}
    Dinh, Tien-Cuong; Wu, Hao: Harmonic currents directed by foliations by Riemann surfaces. {\it Proc. Amer. Math. Soc.} {\bf 149} (2021), no. 8, 3453--3461.
     
 \bibitem{DT}   Driscoll, Tobin Allen; Trefethen, Lloyd N.:
 Schwarz-Christoffel mapping. Cambridge Monographs on Applied and Computational Mathematics, {\bf 8}. {\it Cambridge University Press, Cambridge,} 2002. xvi+132 pp. 
     
     
     
     
\bibitem{Federer}
Federer Herbert: {\it Geometric Measure Theory.} Die Grundlehren der
mathematischen Wissenschaften, Band {\bf 153} Springer-Verlag New York Inc., New York, 1969. 
 
\bibitem{FornaessSibony05}
 Forn\ae ss, John Erik; Sibony, Nessim:   Harmonic currents of finite energy and laminations. {\it Geom. Funct. Anal.} {\bf  15} (2005), no. 5, 962--1003.
 
 \bibitem{FornaessSibony08}
  Forn\ae ss, John Erik; Sibony, Nessim: Riemann surface laminations with singularities.
{\it J. Geom. Anal.} {\bf 18} (2008), no. 2, 400--442.

 \bibitem{FornaessSibony10}
  Forn\ae ss, John Erik; Sibony, Nessim:
  Unique ergodicity of harmonic currents on singular foliations of $\P^2.$ {\it Geom. Funct. Anal.}  {\bf  19} (2010), no. 5, 1334--1377.
   
   
\bibitem{FornaessSibonyWold} Forn\ae ss, John Erik; Sibony, Nessim; Wold, Erlend Forn\ae ss: Examples of minimal laminations and associated currents. {\it Math. Z.} {\bf 269} (2011), no. 1-2, 495-520.


\bibitem{Garnett} Garnett, Lucy: Foliations, the ergodic theorem and Brownian motion. {\it J. Funct. Anal.}  {\bf  51} (1983), no. 3, 285--311.

\bibitem{Glutsyuk} Glutsyuk, Alexey A.:  Hyperbolicity of the leaves of a generic one-dimensional holomorphic foliation on a nonsingular projective algebraic variety. (Russian)
 {\it Tr. Mat. Inst. Steklova} {\bf 213} (1997), Differ. Uravn. s Veshchestv. i Kompleks. Vrem., 90--111; translation in {\it Proc. Steklov Inst. Math.} 1996, no. 2, {\bf 213}, 83--103. 
 
 \bibitem{IY}
Ilyashenko, Yulij; Yakovenko, Sergei: Lectures on analytic differential equations. Graduate Studies in Mathematics. {\bf 86}. American Mathematical Society, Providence, RI, 2008.

 
\bibitem{Jouanolou}            
Jouanolou, Jean-Pierre:    {\it \'Equations de Pfaff alg\'ebriques.}  Lecture Notes in Mathematics, {\bf 708},
 Springer, Berlin, 1979. 
 
 \bibitem{Krantz}
  Krantz, Steven G.:  Function theory of several complex variables. Second edition. The Wadsworth \& Brooks/Cole Mathematics Series.{\it  Wadsworth \& Brooks/Cole Advanced Books \& Software, 
  Pacific Grove, CA,} 1992. xvi+557 pp.
  
  
  \bibitem{LehtoVirtanen}
   Lehto, Olli; Virtanen, Kaarlo Ilmari:  Boundary behaviour and normal meromorphic functions. {\it  Acta Math.}  {\bf  97} (1957), 47--65.
  
\bibitem{Neto}
Lins Neto,  Alcides:  Uniformization and the Poincar\'e metric on the leaves of a foliation by curves.
{\it  Bol. Soc. Brasil. Mat. (N.S.)}  {\bf 31} (2000), no. 3, 351--366. 
  
 \bibitem{NetoSoares}
Lins Neto,    Alcides;  Soares,  M\'arcio Gomes.
Algebraic solutions of one-dimensional foliations.
{\it J. Differential Geom.}  {\bf 43} (1996), no. 3, 652--673. 

   





 \bibitem{NguyenVietAnh17}
 Nguy\^en, Vi\^et-Anh:
  Oseledec   multiplicative ergodic theorem   for   laminations.
 {\it  Mem. Amer. Math. Soc.}  {\bf  246} (2017), no. 1164, ix+174 pp. 
 
  \bibitem{NguyenVietAnh18a}
 Nguy\^en, Vi\^et-Anh:
 Directed harmonic currents near hyperbolic singularities.  
{\it  Ergodic Theory Dynam. Systems} {\bf 38} (2018), no. 8, 3170--3187.
  
\bibitem{NguyenVietAnh18b} Nguy\^en, Vi\^et-Anh: Singular holomorphic foliations by curves I: integrability of holonomy cocycle in dimension 2. 
{\it Invent. Math.} {\bf 212} (2018), no. 2,  531--618.

\bibitem{NguyenVietAnh18c}
Nguy\^en, Vi\^et-Anh: Ergodic theory for Riemann surface laminations: a survey. {\it Geometric complex analysis,} 291--327, Springer Proc. Math. Stat., {\bf 246}, {\it Springer, Singapore,} 2018. 

\bibitem{NguyenVietAnh18d} Nguy\^en, Vi\^et-Anh: Singular holomorphic  foliations by curves  II: Negative Lyapunov exponent. {\it Preprint} (2018).
{\tt arXiv:1812.10125v2}, 43 pages.


 
 
 \bibitem{NguyenVietAnh20b}
 Nguy\^en, Vi\^et-Anh:  Ergodic theorems for laminations and foliations: recent results and perspectives. {\it  Acta Math. Vietnam.} {\bf 46} (2021), no. 1, 9--101.
  
 
 \bibitem{Sibony}
 Sibony, Nessim: Quelques probl\`emes de prolongement de courants en analyse complexe. (French) [Some extension problems for currents in complex analysis] {\it Duke Math. J. } {\bf 52}
 (1985), no. 1, 157--197.
 
 
\bibitem{Siu}
Siu, Yum Tong:
Analyticity of sets associated to Lelong numbers and the extension of closed positive currents.
{\it Invent. Math.} {\bf 27} (1974), 53-156. 

 
\bibitem{Skoda} Skoda, Henri: Prolongement des courants, positifs, ferm\'es de masse finie. (French) [Extension of closed, positive currents of finite mass] {\it Invent. Math.}
{\bf 66} (1982), no. 3, 361--376. 



\bibitem{Sullivan} Sullivan, Dennis: Cycles for the dynamical study of foliated manifolds and complex manifolds. {\it Invent. Math.} {\bf 36} (1976), 225--255. 
 
\bibitem{Widder}  Widder, David Vernon: Functions harmonic in a strip. {\it Proc. Amer. Math. Soc.} {\bf 12} (1961), 67--72. 
\end{thebibliography}
\end{document}